\title{An extension of $z$-ideals and $z^\circ$-ideals}
\author[A. R. Aliabad]{A. R. Aliabad}
\address{Department of Mathematics, Shahid Chamran University of Ahvaz, Ahvaz, Iran}
\email{aliabady\_r@scu.ac.ir}
\author[M. Badie]{M. Badie$^*$}
\address{Department of Basic Sciences, Jundi-Shapur University of Technology, Dezful, Iran}
\email{badie@jsu.ac.ir}
\author[S. Nazari]{S. Nazari}
\address{Department of Mathematics, Shahid Chamran University of Ahvaz, Ahvaz, Iran}
\email{sajadnazari1362@yahoo.com}
\keywords{$z$-ideal, $z^\circ$-ideal, strong $z$-ideal, strong $z^\circ$-ideal, prime ideal, semiprime ideal, Zariski topology, Hilbert ideal, rings of continuous functions}
\subjclass[2010]{Primary 13Axx; Secondary 54C40}
\theoremstyle{plain}
\newtheorem{Thm}{Theorem}[section]
\newtheorem{Lem}[Thm]{Lemma}
\newtheorem{Def}[Thm]{Definition}
\newtheorem{Pro}[Thm]{Proposition}
\newtheorem{Cor}[Thm]{Corollary}
\theoremstyle{definition}
\newtheorem{Exa}[Thm]{Example}
\newtheorem{Rem}[Thm]{Remark}
\newcommand{\cal}{\mathcal}
\newcommand{\N}{\mathbb{N}}
\newcommand{\R}{\mathbb{R}}
\newcommand{\Z}{\mathbb{Z}}
\newcommand{\Ge}[1]{\langle #1 \rangle}
\renewcommand{\H}{\mathcal{H}}
\newcommand{\J}{\text{\textbf{J}}}
\newcommand{\ff}{if and only if }
\newcommand{\RTA}{\Rightarrow}
\newcommand{\LTA}{\Leftarrow}
\newcommand{\LRTA}{\Leftrightarrow}
\newcommand{\TOO}{\rightarrow}
\newcommand{\Mi}{\mathrm{Min}}
\newcommand{\Ma}{\mathrm{Max}}
\newcommand{\An}{\mathrm{Ann}}
\newcommand{\Sp}{\mathrm{Spec}}
\newcommand{\Rad}{\mathrm{Rad}}
\newcommand{\Jac}{\mathrm{Jac}}
\newcommand{\sub}{\subseteq}
\newcommand{\set}{\setminus}
\newcommand{\ov}{\overline}
\newcommand{\la}{\lambda}
\newcommand{\La}{\Lambda}
\newcommand{\tohi}{\emptyset}
\newcommand{\ex}{\exists}
\begin{document}

\let\thefootnote\relax\footnote{$^*$ Corresponding author}

\begin{abstract}
Let $R$ be a commutative ring, $Y\sub \Sp(R)$ and $ h_Y(S)=\{P\in Y:S\sub P \}$, for every $S\sub R$. An ideal $I$ is said to be an $\H_Y$-ideal whenever it follows from $h_Y(a)\sub h_Y(b)$ and $a\in I$ that $b\in I$. A strong  $\H_Y$-ideal is defined in the same way by replacing an arbitrary finite set $F$ instead of the element $a$. In this paper these two classes of ideals (which are based on the spectrum of the ring $R$ and are a generalization of the well-known concepts semiprime ideal, z-ideal, $z^{\circ}$-ideal (d-ideal), sz-ideal and $sz^{\circ}$-ideal ($\xi$-ideal)) are studied. We show that the most important results about these concepts, ``Zariski topology", ``annihilator" and etc can be extended in such a way that the corresponding consequences seems to be trivial and useless. This comprehensive look helps to recognize the resemblances and differences of known concepts better.
\end{abstract}

\maketitle

\section{Introduction}

The concept of $z$-ideal, first studied in the rings of continuous functions as an ideal $I$ of $C(X)$ that $Z(f)\sub Z(g)$ and $f\in I$ implies that $g\in I$, see \cite{gillman1960rings}. Then this concept studied more generally for the commutative rings, in \cite{mason1973z}, as an ideal $I$ of $R$ that whenever two elements of $R$ are contained in the same family of maximal ideals and $I$ contains one of them, then it follows that $I$ contains the other one. If we  use $(Z(f))^{\circ}\sub (Z(g))^{\circ}$ instead of the above inclusion relation and the minimal prime ideals instead of the maximal ideals in the above definitions,  then we obtain the concept of $z^{\circ}$-ideal ($d$-ideal) in $C(X)$ and the commutative rings, which are introduced and carefully studied in 
\cite{azarpanah1999z,azarpanah2000ideals,huijsmans1980z}. The concepts of $z$-ideal and $z^{\circ}$-ideal can be generalized to the concepts of $sz$-ideal and $sz^{\circ}$-ideal ($\xi$-ideal), respectively, based on the finite subsets of the ideals instead of the single points in the ideal, and are studied in \cite{aliabad2011sz,Artico1981,mason1973z}.  

In this paper, we define and carefully study the $\H_Y$-ideals and the strong $\H_Y$-ideals which are a generalization of all of the above concepts. It is not difficult to see that a large amount of the results of the above mentioned papers and generally the papers in the literature about these topics, are special cases of the results of this paper. 

In the next section we recall some pertinent definitions. In Section 3, we define, characterize and give examples of $\H_Y$-ideals, strong $\H_Y$-ideals and $Y$-Hilbert ideals and study their relations. We give new characterizations of $z^{\circ}$-ideals and $sz^{\circ}$-ideals. It is shown that the minimal prime ideals over a (strong) $\H_Y$-ideal is again  (strong) $\H_Y$-ideals and so every (strong) $ \H_Y $-ideal is the intersection of minimal prime (strong) $\H_Y$-ideals containing it. In $C(X)$ the concepts of $\H_Y$-ideals and strong $\H_Y$-ideals coincide and the conditions under which these two classes of ideals coincide in an arbitrary ring is also considered in this section. The family of all $h_Y(F)$'s, where $F$ is an arbitrary finite subset of $R$, is closed under the finite intersection and union, hence it forms a distributive lattice. The study of (minimal prime, prime and maximal) filters of this distributive lattice and their correspondence with the (minimal prime, prime and maximal) strong $\H_Y$-ideals of $R$ is the subject of Section 4. Section 5 is devoted to propositions which generates a rich source of examples of $\H_Y$-ideals, strong $\H_Y$-ideals and $Y$-Hilbert ideals. For example if $I$ is a (strong) $\H_Y$-ideal, then $(J:I)$ and $I_A$ are (strong) $\H_Y$-ideals, where $A$ is a multiplicatively closed subset of $R$ disjoint from $I$. Moreover we give characterizations of Von Neumann regular rings, according to the (strong) $\H_Y$-ideals. In Section 6 we answer the natural questions that arise about the product, contraction, extension and quotients of (strong) $\H_Y$-ideals and $Y$-Hilbert ideals. In the last section we characterize certain (strong) $\H_Y$-ideals over or contained in an arbitrary ideal. For example for every ideal $I$, the smallest (strong) $\H_Y$-ideal containing $I$ exists and is shown by $I_{\H}$ ($I_{S\H}$). We give a precise characterization of these ideals and their properties.

\section{Preliminaries}

In this article, any ring $ R $ is commutative with unity.  A semiprime ideal is an ideal which is an intersection of prime ideals. The set of all ideals of $R$ is denoted by ${\cal{I}}(R)$. For each ideal $I\in{\cal{I}}(R)$ and each element $ a $ of $ R $, we denote the ideal $ \{ x \in R : ax \in I\}$ by $ ( I : a ) $. When $ I = \Ge{0}  $ we write $ \An(a) $ instead of $(\Ge{0}:a)$ and call this the annihilator of $a$. A prime ideal $ P $ containing an ideal $I$ is said to be a minimal prime  over $ I $, if there is no any prime ideal strictly contained in $ P $ that contains $ I $. $\Sp(R)$, $\Mi(R)$, $\Ma(R)$, $\Rad(R)$ and $\Jac(R)$ denote the set of all prime ideals, all minimal prime     ideals, all maximal ideals of $R$  and their intersections, respectively. By $\Mi(I)$ we mean the set of minimal prime ideals over $I$. In fact $\Mi(\Ge{0})=\Mi(R)$. A ring $R$ is said to be reduced if  $\Rad(R)=\Ge{0}$. If $\Jac(R)=\Ge{0}$, then we call $R$ semiprimitive. The socle of a ring $R$ is the sum of all minimal ideals of $R$.

A prime ideal $ P $ is called a Bourbaki associated prime divisor of an ideal $ I $ if $ ( I‌ : x ) = P $, for some $ x \in R $. We denote the set of all Bourbaki associated prime divisors of an ideal $ I $ by $ {\cal{B}}(I) $. We use $ {\cal{B}}(R) $ instead of $ {\cal{B}}(\Ge{0}) $. A representation $ I = \bigcap_{P‌ \in {\cal{P}}} P $ of $ I $ as an intersection of prime ideals is called irredundant if no $ P \in {\cal{P}} $ may be omitted. Let $ I $ be a semiprime ideal, $ P_\circ \in \Mi(I) $ is called irredundant with respect to $ I $,  if $ I \neq \bigcap_{ P_\circ \neq P \in \Mi(I) } P$. If $ I $ is equal to the intersection of all irredundant with respect to $ I $, then we call $ I $ a fixed-place ideal, exactly, by \cite[Theorem 2.1]{aliabad2013fixed}, we have $ I = \bigcap {\cal{B}}(I) $.

In this paper, all $Y\sub \Sp(R)$ is considered by Zariski topology; i.e.,  by assuming as a base for the closed sets of $Y$, the sets $h_Y(a)$ where $h_Y(a)=\{ P\in Y:~a\in P\}$. Hence, closed sets of $Y$ are of the form $h_Y(I)=\bigcap_{a\in I}h_Y(a)=\{P\in Y:~ I\sub P\}$, for some ideal $I$ in $R$. Also, we set $h_Y^c(I)=Y\backslash h_Y(I)$. For any subset $S$ of $Y$, we show the kernel of $S$ by $k(S)=\bigcap_{P\in S}P$ and  we have $\overline{S}=cl_Y S=h_Yk(S)$. When $Y=\Sp(R)$, we omit the index $Y$ and when $Y=\Ma(R)$ ($Y=\Mi(R)$) we write  $M$ (m) instead of $Y$ in the index. By these notations, for every $S\sub R$, we can use the notations $kh_m(S)$ and $kh_M(S)$ instead of $P_S$ and $M_S$ (which is usually used in the context of $C(X)$), respectively. We use the following well-known lemma frequently, one may see \cite[Lemma 4.1]{kist1963minimal} or \cite[Proposition 2.9]{aliabad2016alpha} for the proof.

\begin{Lem}
	Let $R$ be a ring, $Y\sub \Sp(R)$ and $k(Y)=I$. Then  $(I:S)=kh_Y^c(S)$, for every $S\sub R$. In particular, if $k(Y)=\langle 0\rangle $, then $Ann(S)=kh_Y^c(S)$.
	\label{introduction}
\end{Lem}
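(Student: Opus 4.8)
The plan is to unwind all the definitions and reduce everything to the primality of the ideals lying in $Y$. For a subset $S\sub R$ I read $(I:S)$ as $\{x\in R:xS\sub I\}=\bigcap_{s\in S}(I:s)$, so that $x\in(I:S)$ means $xs\in I$ for every $s\in S$. Since by hypothesis $I=k(Y)=\bigcap_{P\in Y}P$, the requirement $xs\in I$ is the same as $xs\in P$ for every $P\in Y$. Hence $x\in(I:S)$ \ff $xs\in P$ for all $s\in S$ and all $P\in Y$, and because each $P\in Y$ is prime, the membership $xs\in P$ is equivalent to ``$x\in P$ or $s\in P$''. This equivalence is the single device that drives the whole computation.

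Next I would fix $P\in Y$ and split according to whether $S\sub P$. If $P\in h_Y(S)$, i.e.\ $S\sub P$, then $s\in P$ for every $s\in S$, so the disjunction ``$x\in P$ or $s\in P$'' holds automatically and imposes no constraint on $x$. If instead $P\in h_Y^c(S)=Y\set h_Y(S)$, there is some $s_\circ\in S$ with $s_\circ\notin P$; applying primality to $xs_\circ\in P$ forces $x\in P$, and conversely $x\in P$ trivially makes the disjunction hold for every $s$. Thus the condition contributed by such a $P$ is exactly $x\in P$. Collecting the two cases, $x\in(I:S)$ \ff $x\in P$ for every $P\in h_Y^c(S)$, which is precisely $x\in\bigcap_{P\in h_Y^c(S)}P=kh_Y^c(S)$. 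This yields $(I:S)=kh_Y^c(S)$, and the ``in particular'' clause is then immediate, since $k(Y)=\Ge{0}$ gives $I=\Ge{0}$ and $(\Ge{0}:S)=\An(S)$.

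I do not anticipate a serious obstacle; the only point that genuinely requires care is the case analysis over $P\in h_Y(S)$ versus $P\in h_Y^c(S)$, and in particular the verification that a prime failing to contain $S$ really does force $x\in P$. This is exactly where the primality of the members of $Y$ enters, and it is the heart of the argument. If one prefers to build up from single elements, an equivalent route is to first prove $(I:a)=kh_Y^c(a)$ by the same primality computation and then assemble the general case through $(I:S)=\bigcap_{s\in S}(I:s)$ together with the identity $h_Y^c(S)=\bigcup_{s\in S}h_Y^c(s)$, which gives $kh_Y^c(S)=\bigcap_{s\in S}kh_Y^c(s)$; both routes collapse to the same elementary verification.
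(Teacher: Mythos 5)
Your proof is correct: the reduction of $x\in(I:S)$ to the condition ``for all $P\in Y$ and $s\in S$, $x\in P$ or $s\in P$'', followed by the case split over $P\in h_Y(S)$ versus $P\in h_Y^c(S)$, is exactly the standard primality argument, and the empty-intersection convention $k(\emptyset)=R$ handles the degenerate case $S\sub I$ correctly. The paper itself gives no proof of this lemma, only citing the literature, and your argument is precisely the one found in those sources.
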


Throughout the paper $C(X)$ (resp., $C^*(X)$)  is the ring of all (resp., bounded) real valued continuous functions on a Tychonoff space $X$. Suppose that $f\in C(X)$, we denote $f^{-1}\{0\}$ by $Z(f)$ and $X\set Z(f)$ by $Coz(f)$. Every subset of $X$ of the form $Z(f)$ (resp., $Coz(f)$), for some $f \in C(X)$ is called zero set (resp., cozero set). A space $X$ is called pseudocompact, if $C(X)=C^*(X)$. $\ov{Coz(f)}$ is called the support of $f$. The family of all function in $C(X)$ with compact (resp., pseudocompact) support is denoted by $C_K(X)$ (resp., $C_\psi(X)$).

Recall that if $L$ is a lattice, then $\emptyset\neq F\sub L$ is a filter if $F$ is closed under the finite meet and whenever $a\in F$ and $b\geq a$, then it follows that $b\in F$. A filter $F$  is called prime if for every $a,b\in L$, $a\vee b\in F$ implies that $a\in F$ or $b\in F$.

The reader is referred to \cite{atiyah1969introduction,sharp2000steps,stephen1970general,gillman1960rings,blythlattices,gratzer98} for undefined terms and notations.

\section{$\H_Y $-ideals, $\H_Y $-filters, strong $\H_Y $-ideals, $Y$-Hilbert ideals  and their characterizations}

First, for a set $A$, we contract that  ${\bf{F}}(A) $ is the set of all finite subsets of $A$. Recall that a ring of sets is a collection of subsets of some set $A$ which is closed under the finite unions and  intersections. A ring of sets is obviously a distributive lattice. Now, for a ring $R$, we denote the collection  $\{h_Y (F): F \in {\bf{F}}(R)\}=\{ h_Y (I): I \text{ is a finitely generated ideal of } R \} $ by $\H_Y $.  Since for arbitrary ideals $I$ and $J$ of $R$, $h_Y (I) \cap h_Y (J) = h_Y (I+J) $, $h_Y (I) \cup h_Y (J) = h_Y (IJ)$, $ h_Y (\langle 0 \rangle)=Y $ and $h_Y (R)=h_Y (\langle 1\rangle)=\tohi$, also since the sum and the product of two finitely generated ideals are finitely generated, $ {\cal{H}}_Y  $ is a ring of sets and so it is a bounded distributive lattice. We call a filter of the distributive lattice $ {\cal{H}}_Y  $ an $\H_Y $-filter on $Y$. Note that all prime $ {\cal{H}}_Y  $-filters  and all $ {\cal{H}}_Y  $-ultrafilters  are assumed to be proper filters. Now suppose that $ \mathscr{F} $ is an $\H_Y $-filter on $Y  \sub \Sp(R)$ and $I$ is an ideal of $R$. We denote $ \{ h_Y (S) : S \in {\bf{F}}(I) \} $ and $ \{ a \in R : h_Y (a) \in \mathscr{F} \} $ by $\H_Y (I) $ and  $\H_Y^{-1}(\mathscr{F})$, respectively.

\begin{Lem}
    Let $I$ be an ideal of a ring $R$, $\mathscr{F} $ be an $\H_Y $-filter on $ Y  \sub \Sp(R) $ and $F$ be a finite subset of $R$. The following statements hold.
    \begin{itemize}
    	\item[(a)] $ h_Y (F) \in \mathscr{F} $ \ff $ F \sub \H_Y^{-1}(\mathscr{F})$.
        \item[(b)] $\H_Y^{-1}(\mathscr{F})$ is an ideal of $R$.
        \item[(c)] $\H_Y(I)$ is an $\H_Y $-filter on $Y$.
	\end{itemize}
\label{finite subset of Y -1}
\end{Lem}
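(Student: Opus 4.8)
The plan is to verify each of the three claims directly from the definitions, working entirely with the given descriptions of $\H_Y^{-1}(\mathscr F)$ and $\H_Y(I)$ together with the lattice identities $h_Y(I)\cap h_Y(J)=h_Y(I+J)$ and $h_Y(I)\cup h_Y(J)=h_Y(IJ)$ that were established in the paragraph preceding the lemma. For part (a), recall that $\H_Y^{-1}(\mathscr F)=\{a\in R:h_Y(a)\in\mathscr F\}$ and that $\mathscr F$ is a filter of the lattice $\H_Y$. First I would prove the forward direction: if $h_Y(F)\in\mathscr F$ for $F=\{a_1,\dots,a_n\}$, then since $h_Y(F)=h_Y(\Ge{a_1,\dots,a_n})=\bigcap_i h_Y(a_i)\sub h_Y(a_i)$ and filters are upward closed, each $h_Y(a_i)\in\mathscr F$, whence $a_i\in\H_Y^{-1}(\mathscr F)$. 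Conversely, if $F\sub\H_Y^{-1}(\mathscr F)$ then each $h_Y(a_i)\in\mathscr F$, and since a filter is closed under finite meets, $\bigcap_i h_Y(a_i)=h_Y(F)\in\mathscr F$.

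For part (b), I would check the two ideal axioms using part (a) as the main tool. Closure under addition: given $a,b\in\H_Y^{-1}(\mathscr F)$, the set $\{a,b\}$ lies in $\H_Y^{-1}(\mathscr F)$, so by (a) $h_Y(\{a,b\})=h_Y(\Ge{a,b})\in\mathscr F$; since $a+b\in\Ge{a,b}$ we have $h_Y(\Ge{a,b})\sub h_Y(a+b)$, and upward closure of $\mathscr F$ gives $h_Y(a+b)\in\mathscr F$, i.e. $a+b\in\H_Y^{-1}(\mathscr F)$. Absorption: for $a\in\H_Y^{-1}(\mathscr F)$ and arbitrary $r\in R$, I observe $h_Y(a)\sub h_Y(ra)$ (any prime containing $a$ contains $ra$), so $h_Y(ra)\in\mathscr F$ and $ra\in\H_Y^{-1}(\mathscr F)$. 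Nonemptiness is immediate since $0\in\H_Y^{-1}(\mathscr F)$ (as $h_Y(0)=Y$ is the top element of the lattice, hence in every filter).

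For part (c), I must show $\H_Y(I)=\{h_Y(S):S\in{\bf F}(I)\}$ is a filter of $\H_Y$. I would first note it is a nonempty subset of $\H_Y$ (taking $S=\{0\}$ gives $h_Y(0)=Y$). For closure under finite meets, given $h_Y(S_1),h_Y(S_2)$ with $S_1,S_2\in{\bf F}(I)$, their intersection is $h_Y(S_1\cup S_2)$ via the identity $h_Y(\Ge{S_1})\cap h_Y(\Ge{S_2})=h_Y(\Ge{S_1}+\Ge{S_2})=h_Y(\Ge{S_1\cup S_2})$, and $S_1\cup S_2$ is a finite subset of $I$, so this meet lies in $\H_Y(I)$. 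For upward closure, suppose $h_Y(S)\in\H_Y(I)$ and $h_Y(F)\in\H_Y$ with $h_Y(S)\sub h_Y(F)$; here the subtle point is that the larger lattice element $h_Y(F)$ need not obviously arise from a finite subset of $I$. I would resolve this by observing that $h_Y(S)\sub h_Y(F)$ already exhibits $h_Y(F)$ as a member of $\H_Y$ dominating an element of $\H_Y(I)$, and then argue that $h_Y(S\cup F')$ for a suitable finite $F'\sub I$ reproduces $h_Y(F)$; more cleanly, since the containment forces $h_Y(F)=h_Y(S)\cup h_Y(F)=h_Y(\Ge{S}\cdot\langle F\rangle)$, one must produce a finite subset of $I$ witnessing this.

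\textbf{The main obstacle} I anticipate is exactly this upward-closure step in part (c): one must confirm that whenever $h_Y(F)\in\H_Y$ contains some $h_Y(S)$ with $S\in{\bf F}(I)$, the set $h_Y(F)$ is itself of the form $h_Y(S')$ for some finite $S'\sub I$. The resolution is that $h_Y(S)\sub h_Y(F)$ gives $h_Y(F)=h_Y(F)\cup h_Y(S)=h_Y(\Ge{F}\Ge{S})$, and $\Ge{F}\Ge{S}$ is generated by the finitely many products $f_i s_j$, each of which lies in $I$ (since $S\sub I$ and $I$ is an ideal); thus $S'=\{f_i s_j\}$ is a finite subset of $I$ with $h_Y(S')=h_Y(F)$, placing $h_Y(F)$ in $\H_Y(I)$ and completing the proof.
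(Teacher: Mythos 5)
Your proof is correct and follows essentially the same route as the paper's: upward closure plus finite meets for (a), the containments $h_Y(a)\cap h_Y(b)\sub h_Y(a+b)$ and $h_Y(a)\sub h_Y(ra)$ for (b), and for the upward-closure step of (c) the same key identity $h_Y(F)=h_Y(S)\cup h_Y(F)=h_Y(SF)$ with $SF=\{s_jf_i\}$ a finite subset of $I$. The obstacle you flagged in (c) is precisely the one the paper addresses, and your resolution matches theirs.
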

\begin{proof}
    (a $\RTA$). For every $ s \in F $, $ h_Y (F) \sub h_Y (s) $, thus $ h_Y (s) \in \mathscr{F} $ and therefore $ s \in \H_Y^{-1}(\mathscr{F})$, for every $s\in F$. Hence, $ F \sub \H_Y^{-1}(\mathscr{F})$.

    (a $\LTA$). For every $ s \in S $, $ h_Y (s) \in \mathscr{F} $. Since $ F $ is finite, $ h_Y (F) = \bigcap_{s \in F} h_Y (s) \in \mathscr{F} $.

    (b). Let $a,b\in \H_Y^{-1}(\mathscr{F})$ and $r\in R$. Clearly since $h_Y(a)\cap h_Y(b)\sub h_Y(a+b)$ and $h_Y(a)\sub h_Y(ra)$, we have  $a+b, ra\in \H_Y^{-1}(\mathscr{F})$.

    (c). Suppose that $h_Y(F_1),h_Y(F_2)\in \H_Y(I)$, where $F_1$ and $F_2$ are finite subsets of $I$. Clearly $F_1\cup F_2$ is a finite subset of $I$ and consequently $h_Y(F_1)\cap h_Y(F_2)=h_Y(F_1\cup F_2)\in\H_Y(I)$.  Suppose now that $F_1$ is a finite subset of $I$, $h_Y(F_1)\sub h_Y(F_2)$ where $F_2$ is a finite subset of $R$. Clearly, $F_1F_2=\{s_1s_2:s_1\in F_1 \text{ and } s_2\in F_2\}$ is a finite subset of $I$ and $h_Y(F_2)=h_Y(F_1)\cup h_Y(F_2)=h_Y(F_1F_2)$. Consequently $h_Y(F_2)\in\H_Y(I)$.
\end{proof}

Note that for a proper ideal $I$, $\H_Y(I)$ is not necessarily a proper $\H_Y$-filter; for example if $Y=\Mi(R)$ and a proper ideal $I$ contains a non zero-divisor then $\H_Y(I)=\H_Y$. By the way, it is easy to see that if $\Ma(R) \sub Y$ then $\H_Y(I)$ is a proper $\H_Y$-filter, for every proper ideal $I$. Now by the two next propositions we define and characterize $\H_Y$-ideals and strong $\H_Y$-ideals.

\begin{Pro}
    Let $R$ be a ring, $Y \sub \Sp(R)$ and $I$ be an ideal of $R$. Then the following are equivalent:
    \begin{itemize}
        \item[(a)] For every $a\in I$ and  $S\sub R$, it follows from $h_Y(a)\sub h_Y(S)$ that $S\sub I$.
        \item[(b)] For every $a\in I$ and  $S\sub R$, it follows from $h_Y(a)=h_Y(S)$ that $S\sub I$.
        \item[(c)] For every $a\in I$ and  $b\in R$, it follows from $ h_Y(a)=h_Y(b)$ that $b\in I$.
        \item[(d)] For every $a\in I$ and  $b\in R$, it follows from $h_Y(a)\sub h_Y(b)$ that $b\in I$.
        \item[(e)]  If $a\in I$, then $kh_Y(a)\sub I$.
        \item[(f)] For every $a\in I$ and  $S\sub R$, it follows from $kh_Y(S)\sub kh_Y(a)$ that $S\sub I$.
        \item[(g)] For every $a\in I$ and  $S\sub R$, it follows from $kh_Y(S)=kh_Y(a)$ that $S\sub I$.
        \item[(h)] For every $a\in I$ and  $b\in R$, it follows from $ kh_Y(b)=kh_Y(a)$ that $b\in I$.
        \item[(k)] For every $a\in I$ and  $b\in R$, it follows from $kh_Y(b)\sub kh_Y(a)$ that $b\in I$.
    \end{itemize}
\label{hyideal}
\end{Pro}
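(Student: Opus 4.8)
The plan is to route everything through the single observation that, for all $a,b\in R$ and $S\sub R$,
\[
h_Y(a)\sub h_Y(b)\ \LRTA\ b\in kh_Y(a)\ \LRTA\ kh_Y(b)\sub kh_Y(a),
\]
together with the set-version $h_Y(a)\sub h_Y(S)\LRTA S\sub kh_Y(a)\LRTA kh_Y(S)\sub kh_Y(a)$. The first string I would get by unwinding definitions: $b\in kh_Y(a)=\bigcap_{P\in h_Y(a)}P$ says precisely that $b$ lies in every prime of $Y$ containing $a$, which is exactly $h_Y(a)\sub h_Y(b)$; the third equivalence then follows because $k$ reverses inclusions and $b\in kh_Y(b)$ always holds. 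The set-version is the same argument applied to $h_Y(S)=\bigcap_{s\in S}h_Y(s)$, using $S\sub kh_Y(S)$. The point of this dictionary is that it makes the hypotheses of the four $kh_Y$-clauses \emph{literally identical} to those of their $h_Y$-counterparts.

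With that in hand I would close the $h_Y$-block $(a)\LRTA(b)\LRTA(c)\LRTA(d)\LRTA(e)$ by a short cycle. The steps $(a)\RTA(b)\RTA(c)$ are free: replacing $h_Y(a)\sub h_Y(S)$ by the equality $h_Y(a)=h_Y(S)$ only strengthens the hypothesis, and taking $S=\{b\}$ specializes $(b)$ to $(c)$. For $(d)\RTA(a)$ I would decompose the set: if $h_Y(a)\sub h_Y(S)$ then $h_Y(a)\sub h_Y(s)$ for each $s\in S$, so $(d)$ forces every $s\in I$, i.e. $S\sub I$. The equivalence $(d)\LRTA(e)$ is immediate from the dictionary, since $(e)$ asserts $kh_Y(a)\sub I$ for $a\in I$ and $b\in kh_Y(a)$ means nothing but $h_Y(a)\sub h_Y(b)$.

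The one implication requiring an idea is $(c)\RTA(d)$, and this is the step I expect to be the crux: in $(d)$ I am handed only the inclusion $h_Y(a)\sub h_Y(b)$, whereas $(c)$ supplies its conclusion only under the equality $h_Y(a)=h_Y(b)$. The trick is to multiply. Since each $P\in Y$ is prime, $h_Y(ab)=h_Y(a)\cup h_Y(b)$, and the assumed inclusion collapses this to $h_Y(ab)=h_Y(b)$. As $I$ is an ideal we have $ab\in I$, so applying $(c)$ to the pair $(ab,b)$ yields $b\in I$, which is exactly $(d)$. Once the $h_Y$-block is closed I would finish by transporting it across the dictionary: $(f)\LRTA(a)$, $(g)\LRTA(b)$, $(h)\LRTA(c)$ and $(k)\LRTA(d)$ all hold because in each case the two hypotheses coincide, so the conclusion ``$\sub I$'' transfers verbatim with no further work.
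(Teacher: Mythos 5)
Your proof is correct. The route differs from the paper's in its decomposition: the paper walks once around a single nine-step cycle $(a)\Rightarrow(b)\Rightarrow\cdots\Rightarrow(k)\Rightarrow(a)$, and as a result it has to run the multiplication trick twice --- once for $(c)\Rightarrow(d)$ using $h_Y(ab)=h_Y(a)\cup h_Y(b)$, and once more for $(h)\Rightarrow(k)$ using the dual identity $kh_Y(a)\cap kh_Y(b)=kh_Y(ab)$. You instead promote the observation that the paper uses only in passing for $(d)\Rightarrow(e)$ (namely $b\in kh_Y(a)\Leftrightarrow h_Y(a)\sub h_Y(b)\Leftrightarrow kh_Y(b)\sub kh_Y(a)$, plus its set version) into a front-loaded dictionary, which makes the hypotheses of $(f),(g),(h),(k)$ literally coincide with those of $(a),(b),(c),(d)$ and reduces the whole statement to the five-condition $h_Y$-block; the second multiplication argument then disappears. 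What you give up is negligible; what you gain is that the only nontrivial implication, $(c)\Rightarrow(d)$ via $ab\in I$ and $h_Y(ab)=h_Y(b)$, is proved exactly once and is visibly the sole piece of ring-theoretic content, the rest being lattice-theoretic bookkeeping between $h_Y$ and $k$. All the individual steps you cite check out, including the two small facts the dictionary rests on, $b\in kh_Y(b)$ and $S\sub kh_Y(S)$.
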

\begin{proof}
    (a) $\RTA$ (b) $\RTA$ (c). They are trivial.

    (c) $\RTA$ (d). We know that $h_Y(a)\cup h_Y(b)=h_Y(ab)$, so if $h_Y(a)\sub h_Y(b)$, then $h_Y(ab)=h_Y(b)$ and $ab\in I$, so $b\in I$.

    (d) $\RTA$ (e). It is readily seen that  $b\in kh_Y(a)$ \ff $h_Y(a)\sub h_Y(b)$, thus if $a\in I$, then by the assumption, we have $kh_Y(a)\sub I$.

    (e) $\RTA$ (f) $\RTA$ (g) $\RTA$ (h). They are trivial.

    (h) $\RTA$ (k). Knowing this fact that $kh_Y(a)\cap kh_Y(b)=k(h_Y(a)\cup h_Y(b))=kh_Y(ab)$, it is follows, by using the same technique as (c $\RTA$ d).

    (k) $\RTA$ (a). If $h_Y(a)\sub h_Y(S)$, then $h_Y(a)\sub h_Y(s)$, for every $s\in S$. Whence $kh_Y(s)\subseteq kh_Y(a)$, for every $s\in S$ and consequently $S\sub I$.
\end{proof}

\begin{Def}
    Let $R$ be a ring and $Y \sub \Sp(R)$. An ideal $I$ of $R$ is said to be  an $\H_Y$-ideal if it satisfies in the equivalent conditions  of Proposition \ref{hyideal}.
\end{Def}

\begin{Pro}
    Let $R$ be a ring, $Y \sub \Sp(R)$ and $I$ be an ideal of $R$. Then the following are equivalent:
    \begin{itemize}
        \item[(a)] For every finite subset $F$‌ of $I$ and every $S\sub R$, it follows from  $h_Y(F)=h_Y(S)$ that $S\sub I$.
        \item[(b)] For every finite subset $F$‌ of $I$ and every finite subset $G$ of $R$, it follows from  $h_Y(F)=h_Y(G)$ that $G\sub I$.
        \item[(c)] For every finite subset $F$‌ of $I$ and every finite subset $G$  of $R$, it follows from  $h_Y(F)\sub h_Y(G)$ that $G\sub I$.
        \item[(d)] It follows from  $h_Y(a)\in \H_Y(I)$  that $a\in I$.
        \item[(e)] For every finite subset $F$ ‌of $R$, it follows from  $h_Y (F)\in \H_Y (I)$ that $F\sub I$.
        \item[(f)] For every finite subset $F$ of $I$ and $a\in R$‌, it follows from  $h_Y(F)=h_Y(a)$ that $a\in I$.
        \item[(g)] For every finite subset $F$ of $I$ and $a\in R$, it follows from  $h_Y(F)\sub h_Y(a)$ that $a\in I$.
        \item[(k)] For every finite subset $F\sub I$, we have $ kh_Y(F)\subseteq I$.
        \item[(l)] For every finite subset $F$ of $I$ and $a\in R$‌, it follows from  $kh_Y(a)=kh_Y(F)$ that $a\in I$.
        \item[(m)] For every finite subset $F$ of $I$ and $a\in R$, it follows from  $kh_Y(a)\sub kh_Y(F)$ that $a\in I$.
        \item[(n)] For every finite subset $F$ of $I$ and any $S\sub R$‌, it follows from  $kh_Y(S)=kh_Y(F)$ that $S\sub I$.
        \item[(o)] For every finite subset $F$ of $I$ and any $S\sub R$, it follows from  $kh_Y(S)\sub kh_Y(F)$ that $S\sub I$.
    \end{itemize}
\label{strong}
\end{Pro}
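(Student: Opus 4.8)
**The plan is to prove Proposition \ref{strong} by exactly mimicking the cyclic-implication strategy used for Proposition \ref{hyideal}, exploiting the lattice identities that govern the family $\H_Y$.** The statement lists twelve conditions (a)–(o), and they split naturally into two halves: conditions (a)–(g) phrased via $h_Y$, and conditions (k)–(o) phrased via $kh_Y$, with (d) and (e) providing the bridge to the filter $\H_Y(I)$. I would establish a single cycle of implications that visits every condition, say in the order
\[
(a)\RTA(b)\RTA(c)\RTA(d)\RTA(e)\RTA(f)\RTA(g)\RTA(k)\RTA(l)\RTA(m)\RTA(n)\RTA(o)\RTA(a),
\]
so that all twelve become equivalent. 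Most of these arrows are trivial specializations (restricting $S$ to a finite set, or to a single element, or unwinding a definition), so the real content lies in a small number of genuine steps.

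**The key arithmetic facts I would use repeatedly are the lattice relations already recorded in the excerpt:** for finite $F$ one has $h_Y(F)=\bigcap_{s\in F}h_Y(s)$, the union identity $h_Y(a)\cup h_Y(b)=h_Y(ab)$, and dually $kh_Y(a)\cap kh_Y(b)=kh_Y(ab)$, together with the Galois-type equivalence ``$b\in kh_Y(F)$ \ff $h_Y(F)\sub h_Y(b)$.'' The step (c)$\RTA$(g), collapsing an inclusion between $h_Y(F)$ and a finite set $h_Y(G)$ down to the single-element case, is handled by taking $G=\{a\}$; the reverse passage (g)$\RTA$(c) goes through (k) and the union identity, exactly as $(c)\RTA(d)$ did in Proposition \ref{hyideal}, since $h_Y(F)\sub h_Y(a_1)\cap\cdots\cap h_Y(a_n)$ forces each $h_Y(F)\sub h_Y(a_i)$. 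For the crossover from the $h_Y$ formulation to the $kh_Y$ formulation I would prove the hinge (g)$\RTA$(k): given $F\sub I$ finite and $b\in kh_Y(F)$, the equivalence above yields $h_Y(F)\sub h_Y(b)$, whence $b\in I$ by (g), so $kh_Y(F)\sub I$. The return arrow (o)$\RTA$(a) reverses the same dictionary, translating $h_Y(F)=h_Y(S)$ into an inclusion $kh_Y(S)\sub kh_Y(F)$ via $h_Y(F)\sub h_Y(s)\Leftrightarrow kh_Y(s)\sub kh_Y(F)$ applied termwise.

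**The two steps that carry the only nonroutine idea are (d)$\RTA$(e) and its neighbors, which connect the pointwise condition to the filter $\H_Y(I)$.** Here I would invoke Lemma \ref{finite subset of Y -1}(a): if $F=\{a_1,\dots,a_n\}$ and $h_Y(F)\in\H_Y(I)$, then since $h_Y(F)\sub h_Y(a_i)$ each $h_Y(a_i)$ also lies in the filter $\H_Y(I)$, and applying (d) to each $a_i$ gives $F\sub I$; conversely (e)$\RTA$(d) is the instance $F=\{a\}$. The subtle point worth stating carefully is that $h_Y(a)\in\H_Y(I)$ unwinds, by definition of $\H_Y(I)$, precisely to the existence of a finite $F'\sub I$ with $h_Y(F')\sub h_Y(a)$ (using that $\H_Y(I)$ is upward closed, by Lemma \ref{finite subset of Y -1}(c)), which is exactly the hypothesis of (g); this identification is what makes (d) and (g) interchangeable.

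**I expect the main obstacle to be purely organizational rather than mathematical:** choosing a single traversal order that never requires an implication one has not yet proved, while keeping each individual arrow to a one-line appeal to a lattice identity or a definitional unwinding. No single arrow is hard once the three identities and Lemma \ref{finite subset of Y -1} are in hand, so I would not belabor any of them; the conditions (a), (b), (c), (f), (g) among themselves, and dually (l), (m), (n), (o), differ only in whether $S$ is arbitrary, finite, or a singleton and whether the relation is equality or inclusion, and each such weakening or strengthening is immediate from the fact that $h_Y$ and $kh_Y$ are inclusion-reversing and that a set-inclusion may be checked one element at a time.
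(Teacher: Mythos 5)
Your proposal is correct and matches the paper's approach: the paper's own proof is just the remark that, using the identities $h_Y(AB)=h_Y(A)\cup h_Y(B)$ and $B\sub kh_Y(A)\LRTA h_Y(A)\sub h_Y(B)$, the argument runs exactly as in Proposition \ref{hyideal}, which is precisely the cyclic scheme you lay out. Your extra care with conditions (d) and (e) via Lemma \ref{finite subset of Y -1} fills in the one place the paper leaves implicit, and all the individual arrows you describe check out.
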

\begin{proof}
	By these facts that if $ A $  and $ B $ are arbitrary subsets of $ R $, then $ h_Y(AB) = h_Y(A) \cup h_Y(B) $ and $ B \subseteq kh_Y(A) $ \ff $ h_Y(B) \supseteq h_Y(A) $, it has a similar proof to the previous proposition.
\end{proof}

\begin{Def}
Let $R$ be a ring and $Y \sub \Sp(R)$. An ideal $I$ of $R$ is said to be a strong $\H_Y$-ideal if it satisfies in the equivalent conditions in Proposition \ref{strong}.
\end{Def}

\begin{Def}
    Suppose $Y  \sub \Sp(R)$. An ideal $I$ of $R$ is called a $Y$-Hilbert ideal, if $I$ is  an intersection of elements of some subfamily of $Y$; i.e., $I=kh_Y(I)$.
\end{Def}

Obviously, if $Y=\Ma(R)$, then the concepts of $\H_Y$-ideal, strong $\H_Y$-ideal and $Y$-Hilbert ideal coincide with the concepts of $z$-ideal, $sz$-ideal and Hilbert ideal in the literature, respectively, see \cite{aliabad2011sz} and \cite{mason1989prime}. Also, if $Y=\Mi(R)$, then the concepts of $\H_Y$-ideal and strong $\H_Y$-ideal coincide with the concepts of $z^\circ$-ideal (also known as d-ideal) and  $sz^\circ$-ideal (also known as $\xi$-ideal), respectively, see   \cite{aliabad2011sz}, \cite{azarpanah1999z}, \cite{azarpanah2000ideals}, \cite{Artico1981}, \cite{huijsmans1980z}, \cite{mason1973z}. Finally if $Y=\Sp(R)$, then the concepts of $\H_Y$-ideal, strong $\H_Y$-ideal, $Y$-Hilbert ideals and semiprime ideal coincide.  It is clear that every $Y$-Hilbert ideal is a strong $\H_Y $-ideal and every strong $\H_Y$-ideal is an $\H_Y$-ideal.  By the way, their converse does not hold generally even if $k(Y)=\langle 0\rangle$. If we set $Y=\Ma(C(X))$ then the ideal $O_0$ in $C(\R)$ is a strong $\H_Y$-ideal which is not intersection of maximal ideals. Moreover in \cite[Example 4.1]{aliabad2011sz} an example of a reduced ring is given which contains a $z^{\circ}$-ideal which is not a $sz^{\circ}$-ideal.

Clearly $kh_Y(F)$ is a strong $\H_Y$-ideal, for every finite set $F\sub R$, in fact, it is the smallest strong $\H_Y$-ideal containing $F$. In addition an ideal $I$ is a strong $\H_Y$-ideal \ff $I=\bigcup_{F\in {\bf{F}}(I)}kh_Y(F)=\sum_{F\in {\bf{F}}(I)}kh_Y(F)$. Also it is easy to see that if $X,Y\sub \Sp(R)$, then the family of strong $\H_X$-ideals and strong $\H_Y$-ideals coincide \ff $kh_X(F)=kh_Y(F)$, for every finite subset $F\sub R$. Note that in this case $k(X)=kh_X(0)=kh_Y(0)=k(Y)$, but the converse does not hold generally. For example $\Jac(C(X)) = \Ge{0} = \Rad(C(X))$ and the $z$-ideals and the $z^\circ$-ideals need not be coincide. Moreover, since $k(Y)=kh_Y(0)$, it follows that $k(Y)$ is the smallest strong $\H_Y$-ideal ($\H_Y$-ideal, Y-Hilbert ideal) in $R$.

Naturally, in this paper we were about to study other classes of ideals, close to $\H_Y$-ideals and strong $\H_Y$-ideals, using interior in the right-hand side of the inclusion in their definitions. For example, for $\H_Y$-ideal (strong $\H_Y$-ideal) case, it springs to mind to consider the ideals that it follows from $\left( h_Y(x)\right)^{\circ}\sub \left( h_Y(a)\right)^{\circ}$ ($\left( h_Y(F)\right)^{\circ}\sub \left( h_Y(a)\right)^{\circ}$) and $x\in I$ ($F\sub I$) that $a\in I$. But as one can observe below, we realized that if $Y\sub \Sp(R)$ and $k(Y)=\langle 0\rangle$, then these kind of ideals coincide with the $z^\circ$-ideals (resp., $sz^\circ$-ideals). The following lemma is an improvement of \cite[Proposition 1.1]{Artico1981}, without the redundant condition $\Mi(R)\sub Y$.

\begin{Lem}
\label{hYinterior}
	Let $Y\sub \Sp(R)$ and $k(Y)=\langle 0\rangle$. Then $\left( h_Y(S)\right)^{\circ}=h_Y^c\left( \An(S) \right)  $, for every $S\sub R$.
\end{Lem}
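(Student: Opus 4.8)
The plan is to reduce the statement to two facts already available in the excerpt: the elementary point-set identity expressing the interior of a set as the complement of the closure of its complement, and the closure formula $\ov{T}=h_Yk(T)$ valid for every subset $T$ of $Y$ (recorded in the preliminaries), combined with Lemma~\ref{introduction}.

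First I would observe that, by the very definition $h_Y^c(S)=Y\set h_Y(S)$, the set $h_Y^c(S)$ is precisely the complement of $h_Y(S)$ inside the space $Y$. Since in any topological space the interior of a set equals the complement of the closure of its complement, this gives
$$\left(h_Y(S)\right)^\circ = Y\set \ov{\,h_Y^c(S)\,}.$$
Next I would apply the closure formula to the subset $T=h_Y^c(S)\sub Y$, obtaining $\ov{h_Y^c(S)}=h_Y\bigl(kh_Y^c(S)\bigr)$. The one remaining ingredient is to evaluate the kernel $kh_Y^c(S)$, and this is exactly where the hypothesis $k(Y)=\langle 0\rangle$ enters: Lemma~\ref{introduction} gives $kh_Y^c(S)=\An(S)$. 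Substituting back yields $\ov{h_Y^c(S)}=h_Y(\An(S))$, whence
$$\left(h_Y(S)\right)^\circ = Y\set h_Y(\An(S)) = h_Y^c(\An(S)),$$
which is the claimed equality.

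I do not anticipate a genuine obstacle; the argument is a short chain of equalities. The only points requiring care are bookkeeping ones: making sure the closure formula is applied to a subset of $Y$ (namely $h_Y^c(S)$) rather than to a subset of $R$, and invoking $k(Y)=\langle 0\rangle$ precisely at the step where the annihilator is introduced through Lemma~\ref{introduction}. This structure also clarifies why, as the authors remark, the condition $\Mi(R)\sub Y$ appearing in the cited reference is redundant: nothing beyond $k(Y)=\langle 0\rangle$ is used, since that single hypothesis is all that Lemma~\ref{introduction} requires to identify $kh_Y^c(S)$ with $\An(S)$.
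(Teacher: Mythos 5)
Your argument is correct and is essentially the paper's own proof read in the opposite direction: the paper starts from $h_Y(\An(S))=h_Yk(h_Y^c(S))=\ov{h_Y^c(S)}=\left((h_Y(S))^\circ\right)^c$ and then takes complements, using exactly the same three ingredients (Lemma~\ref{introduction}, the closure formula $\ov{T}=h_Yk(T)$, and the interior-as-complement-of-closure identity). No difference in substance.
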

\begin{proof}
	By Lemma \ref{introduction}, we have 	
	\[h_Y(\An(S))=h_Y(k(h^c_Y(S)))=\overline{(h^c_Y(S))}=\left((h_Y(S))^\circ\right)^c.\]
	Consequently $(h_Y(S))^\circ=h^c_Y(\An(S))$.
\end{proof}

Suppose that $X,Y\sub \Sp(R)$. Clearly, $k(X)=k(Y)$ \ff $hk(X)=hk(Y)$; in the other words $\bigcap X=\bigcap Y$ \ff $\ov{X}=\ov{Y}$. Also, assume that $X$ is a topological space and dense in $T$. We know that if $W$ is an open subset of $T$, then $cl_T(W\cap X)=cl_TW$; equivalently, if $A$ is a closed subset of $T$, then $int_X(A\cap X)=(int_TA)\cap X$. By these facts we have the following lemma which is an improvement of \cite[Theorem 2.3]{Artico1981}.

\begin{Lem}
\label{kykx=0}
    Let $X,Y\sub \Sp(R)$ and $k(X)=\Rad(R)$. Then the following are equivalent:
    \begin{itemize}
    	\item[(a)] $k(Y)=\Rad(R)$.
    	\item[(b)] $\left( h_Y(S)\right)^{\circ}\sub h_Y(T)$ \ff $\left( h_X(S)\right)^{\circ}\sub h_X(T)$, for every  $T,S\sub R$.
    	\item[(c)] $\left( h_Y(S)\right)^{\circ}= \left( h_Y(T)\right)^{\circ}$  \ff $\left( h_X(S)\right)^{\circ}= \left( h_X(T)\right)^{\circ}$, for every  $T,S\sub R$.
    	\item[] If $k(Y)=\Ge{0}$, then the above statements are equivalent to the following statement.
    	\item[(d)] $kh_Y(S)\sub \An^2(S)$, for every $S\sub R$.
    \end{itemize}
\end{Lem}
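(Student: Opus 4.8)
The plan is to prove the equivalences by first establishing the cleanest equivalence $(a)\LRTA(b)$ via a topological density argument, then chaining the others off it, and finally handling $(d)$ separately under the extra hypothesis $k(Y)=\Ge{0}$ by invoking Lemma \ref{hYinterior}. The key topological input is the remark preceding the statement: since $k(X)=k(Y)=\Rad(R)$ is equivalent to $\ov X=\ov Y$ (both having the same kernel), the sets $X$ and $Y$ have a common closure, call it $T=\ov X=\ov Y$ inside $\Sp(R)$, and each is dense in $T$. The crucial formula $int_X(A\cap X)=(int_T A)\cap X$ for $A$ closed in $T$, together with its analogue for $Y$, is what lets me transfer interior conditions between $X$ and $Y$.

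First I would prove $(a)\RTA(b)$. Assume $k(Y)=\Rad(R)$, so $\ov Y=\ov X=T$ and both are dense in $T$. For any $S\sub R$, the set $h_T(S)$ is closed in $T$ and $h_Y(S)=h_T(S)\cap Y$, so by the density formula $\left(h_Y(S)\right)^{\circ}=(int_T h_T(S))\cap Y$; the same holds with $X$ in place of $Y$. Thus the interior of $h_Y(S)$ is the trace on $Y$ of a single set $int_T h_T(S)$ computed in $T$, and likewise for $X$. Because $h_Y(T')=h_T(T')\cap Y$ and $h_X(T')=h_T(T')\cap X$ for any $T'\sub R$, and because $Y$ (resp.\ $X$) is dense in $T$ so that a trace is contained in another trace if and only if the $T$-level sets are so related, the inclusion $\left(h_Y(S)\right)^{\circ}\sub h_Y(T')$ is equivalent to the $T$-level inclusion $int_T h_T(S)\sub h_T(T')$, which is symmetric in $X$ and $Y$. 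This gives $(a)\RTA(b)$, and reversing the roles of $X$ and $Y$ together with the hypothesis $k(X)=\Rad(R)$ gives the converse, so $(a)\LRTA(b)$. The statement $(b)\LRTA(c)$ is then essentially formal: $(c)$ is the special case of $(b)$ where $T'$ is replaced by a set with the same interior, and conversely an equality of interiors is a pair of inclusions, so one packages $(b)$'s two-sided version into $(c)$ and unpacks it back, using $\left(h(T')\right)^{\circ}\sub h(T')$ to go between an inclusion into $h(T')$ and an inclusion into its interior.

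For the final clause I would assume $k(Y)=\Ge{0}$ (which forces $\Rad(R)=\Ge{0}$, i.e.\ $R$ reduced, consistent with the hypotheses) and prove $(d)\LRTA(a)$, or more efficiently $(d)\LRTA(b)$. Here Lemma \ref{hYinterior} applies: $\left(h_Y(S)\right)^{\circ}=h_Y^c(\An(S))$, and iterating, $\left(h_Y(\An(S))\right)^{\circ}=h_Y^c(\An^2(S))$. The condition $kh_Y(S)\sub \An^2(S)$ should be rewritten, via the correspondence between kernels of closures and complements of interiors, into an inclusion of the form $\left(h_Y(\An(S))\right)^{\circ}\sub \left(h_Y(S)\right)^{\circ}$ or a $h_Y$-inclusion to which $(b)$ applies; taking complements and using $h_Y(\An(S))=\ov{h_Y^c(S)}=\left(\left(h_Y(S)\right)^{\circ}\right)^c$ from the proof of Lemma \ref{hYinterior} converts set-kernel statements into interior statements. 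I expect the main obstacle to be exactly this last translation: keeping straight the several levels of complementation and closure (the identity $h_Y(\An(S))=\left(\left(h_Y(S)\right)^{\circ}\right)^c$ is the pivot) and checking that the reducedness hypothesis $k(Y)=\Ge{0}$ is genuinely needed to apply Lemma \ref{hYinterior} on both $Y$ and, if required, on $X$. The density-transfer step in $(a)\LRTA(b)$ is conceptually the heart, but once the formula $\left(h_Y(S)\right)^{\circ}=(int_T h_T(S))\cap Y$ is in hand it is routine; the bookkeeping for $(d)$ is where I would be most careful.
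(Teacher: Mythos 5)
Your density-transfer argument for $(a)\RTA(b)$ is essentially the paper's: it also reduces to a common ambient space (the paper takes $X=\Sp(R)$ without loss of generality) and pushes the inclusion through closures using the density of $Y$. The genuine gap is your claim that the converse $(b)\RTA(a)$ follows ``by reversing the roles of $X$ and $Y$.'' It cannot: your proof of $(a)\RTA(b)$ uses the density of $Y$ in $T$, which is exactly the content of $(a)$, the thing to be proved in the converse; swapping $X$ and $Y$ in the implication ``$(a)$ and the hypothesis imply $(b)$'' just returns the same implication, not its converse. A separate direct argument is required, and it is short: the inclusion $\Rad(R)\sub k(Y)$ is automatic, and for the other inclusion take $a\in k(Y)$, so $\left(h_Y(0)\right)^{\circ}=Y\sub h_Y(a)$; applying $(b)$ with $S=\{0\}$, $T=\{a\}$ gives $X=\left(h_X(0)\right)^{\circ}\sub h_X(a)$, hence $a\in k(X)=\Rad(R)$. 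This is precisely the argument the paper runs (from $(c)$ rather than $(b)$) to close the cycle.

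Two further remarks on $(d)$. First, you overlook that $k(Y)=\Ge{0}$ already forces $k(Y)=\Rad(R)$ (since $\Rad(R)\sub k(Y)$ always), so $(a)$ holds automatically under that extra hypothesis and only the implication $(a)\RTA(d)$ needs proving; the direction $(d)\RTA(a)$ you plan to establish is vacuous. Second, your treatment of $(d)$ is only a plan with the key translation explicitly deferred (``I expect the main obstacle to be exactly this last translation''), so as written it is not a proof. The paper's computation is a three-line chain from Lemma \ref{introduction}:
\[
\An^2(S)=kh_Y^ckh_Y^c(S)=k\left(Y\set h_Ykh_Y^c(S)\right)=k\left(Y\set\ov{h_Y^c(S)}\right)\supseteq k\left(Y\set h_Y^c(S)\right)=kh_Y(S),
\]
which you should carry out rather than route through iterated applications of Lemma \ref{hYinterior}.
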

\begin{proof} 
	Without loss of generality we can suppose that $X=\Sp(R)$.
	
    (a) $\RTA$ (b). Since $k(Y)=\Rad(R)$, it follows that $Y$ is dense in $X$  and so for every $S,T\sub Y$, we have
    \[ (h_X(S))^\circ\cap Y=(h_Y(S))^\circ\sub h_Y(T)\sub h_X(T) \]
    \[ \RTA~~(h_X(S))^\circ\sub\ov{(h_X(S))^\circ}\sub\ov{h_X(T)}=h_X(T).\]
    The converse is clear.

    (b) $\RTA$ (c). It is evident.

    (c) $\RTA$ (a).  Suppose  that $a\in k(Y)$. Since $\left( h_Y(a)\right)^{\circ}=Y=\left( h_Y(0)\right)^{\circ}$, it follows that $\left( h_X(a)\right)^{\circ}= \left(h_X(0)\right)^{\circ}=X$. Therefore, $h_X(a)=X$ and so $a\in  k(X)=\Rad(R)$.

    (a) $\LRTA$ (d). Since $k(Y)=\Ge{0}$, it is sufficient to show that (a) implies (d). For every $S\sub R$,
    \begin{align*}
    \An^2(S) & =kh_Y^ckh_Y^c(S)=k(Y\set h_Ykh_Y^c(S)) \\
    		 & =k(Y\set\ov{h_Y^c(S)})\supseteq k(Y\set h_Y^c(S))=kh_Y(S). \qedhere
    \end{align*}
\end{proof}

\begin{Lem}
	For every finite subset $F$ of $R$, we have $h_m(F)=\left(h_m(F)\right)^\circ$.
\label{hminterior}
\end{Lem}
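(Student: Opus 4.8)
The plan is to show that $h_m(F)$ is \emph{open}; since $h_m(F)$ is by definition closed, this gives $h_m(F)=\left(h_m(F)\right)^\circ$. Because $F$ is finite, $h_m(F)=\bigcap_{a\in F}h_m(a)$ is a finite intersection, so it suffices to prove that each $h_m(a)$ is open, equivalently that its complement $h_m^c(a)$ is closed.

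Since $k(\Mi(R))=\Rad(R)$, Lemma \ref{introduction} gives $(\Rad(R):a)=kh_m^c(a)$, and the closure formula $\ov S=h_Yk(S)$ from the preliminaries yields $\ov{h_m^c(a)}=h_mkh_m^c(a)=h_m\big((\Rad(R):a)\big)$. Thus $h_m^c(a)$ is closed exactly when $h_m\big((\Rad(R):a)\big)\sub h_m^c(a)$, the reverse inclusion $h_m^c(a)\sub\ov{h_m^c(a)}$ being automatic. Unwinding the definitions, the desired inclusion reduces to a single claim: for every $P\in\Mi(R)$, if $(\Rad(R):a)\sub P$ then $a\notin P$; equivalently, by contraposition, $a\in P$ implies $(\Rad(R):a)\nsub P$.

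To prove this claim I would pass to the reduced ring $\ov R=R/\Rad(R)$. Writing $\bar{(\ )}$ for images, the assignment $P\mapsto\ov P$ carries $\Mi(R)$ onto $\Mi(\ov R)$ (as $\Rad(R)\sub P$) and preserves membership, so that $a\in P\iff\bar a\in\ov P$. In the reduced ring the localization $\ov R_{\ov P}$ is local with a single prime $\ov P\,\ov R_{\ov P}$, which coincides with its nilradical and hence, by reducedness, with $\langle 0\rangle$. Therefore $\bar a\in\ov P$ forces $\bar a/1=0$ in $\ov R_{\ov P}$, so there is $\bar b\notin\ov P$ with $\bar a\bar b=0$. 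Lifting, $ab\in\Rad(R)$ with $b\notin P$; that is, $b\in(\Rad(R):a)\set P$, which gives $(\Rad(R):a)\nsub P$ as required.

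The only genuine obstacle is this last step, the standard minimal-prime fact that an element of a minimal prime is annihilated, modulo the nilradical, by something outside the prime; once the problem is transported to the reduced quotient it is immediate from the description of minimal primes via localization. Everything else is the formal manipulation of $k$, $h_m$ and closures furnished by Lemma \ref{introduction}. I note finally that in the reduced case one could instead invoke Lemma \ref{hYinterior} directly, since then $\left(h_m(a)\right)^\circ=h_m^c(\An(a))$ and the claim becomes the familiar equivalence $a\in P\iff\An(a)\nsub P$.
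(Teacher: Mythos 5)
Your proof is correct and follows essentially the same route as the paper's: both arguments show $h_m(F)$ is open by identifying it (or, in your case, each $h_m(a)$) with the complement of $h_m$ of the colon ideal $(\Rad(R):F)$, the key input being the minimal-prime characterization that $a\in P$ forces $ab\in\Rad(R)$ for some $b\notin P$. The only differences are cosmetic: you reduce to singletons and take a finite intersection, and you spell out via localization the step the paper dismisses as ``easy to show.''
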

\begin{proof}
	Suppose that $P\in\Mi(R)$, it easy to show that $F\sub P$ \ff $b\notin P$ exists such that $bF\sub\Rad(R)$. Then 
	\begin{align*}
	P\in h_m(F) & \LRTA\quad F\sub P\quad\LRTA\quad\exists b\notin P\quad bF\sub\Rad(R)\\
				& \LRTA\quad\exists b\in\left(\Rad(R):F\right)\setminus P\quad\LRTA\quad\left(\Rad(R):F\right)\not\sub P\\
				& \LRTA\quad P\notin h_m\left(\Rad(R):F\right) 
	\end{align*}
	Hence $h_m(F)=h^c_m\left(\Rad(R):F\right)$. Now with a method similar to Lemma \ref{hYinterior}, $\left(h_m(F)\right)^\circ=h^c_m\left(\Rad(R):F\right)$, hence $\left(h_m(F)\right)^\circ=h_m(F)$.
\end{proof}

By the above lemmas we give new characterizations of $z^{\circ}$-ideals and $sz^\circ$-ideals in the following proposition.

\begin{Pro}
	Let $Y\sub \Sp(R)$ and $k(Y)=\Rad(R)$. Then the following statements hold:
	\begin{itemize}
		\item[(a)] $I$ is a $z^{\circ}$-ideal \ff it follows from $\left(h_Y(b)\right)^{\circ}\sub h_Y(a)$ and $b\in I$ that $a\in I$; \ff it follows from $\left( h_Y(b)\right)^{\circ}\sub h_Y(S)$ and $b\in I$ that $S\sub I$.
		\item[(b)] $I$ is a $sz^\circ$-ideal \ff for every finite subset $F$ of $I$, it follows from $\left(h_Y(F)\right)^{\circ}\sub h_Y(a)$ that $a\in I$; \ff for every finite subset $F$ of $I$, it follows from $\left(h_Y(F)\right)^{\circ}\sub h_Y(S)$ that $S\sub I$.
\end{itemize}
\end{Pro}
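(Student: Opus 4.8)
The plan is to observe that both displayed conditions in (a) and (b) are nothing but the defining conditions of $z^\circ$- and $sz^\circ$-ideals (that is, of $\H_m$- and strong $\H_m$-ideals, where $m=\Mi(R)$) rewritten after the interior has been transported from the given $Y$ to $m$ and then cleared. The three ingredients are: the standard fact that $k(m)=\Rad(R)$; Lemma \ref{kykx=0}, which allows one to replace $Y$ by $m$ inside interior-inclusions; and Lemma \ref{hminterior}, which removes the interior on the $m$-side for finite sets.

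First I would apply Lemma \ref{kykx=0} with $X=m$. Since $k(m)=\Rad(R)$ and, by hypothesis, $k(Y)=\Rad(R)$, statement (a) of that lemma holds, hence so does statement (b): for all $S,T\sub R$ one has $\left(h_Y(S)\right)^\circ\sub h_Y(T)$ \ff $\left(h_m(S)\right)^\circ\sub h_m(T)$. Specializing $T=\{a\}$ and taking $S=\{b\}$ (for part (a)) or $S=F$ a finite subset of $I$ (for part (b)), and then invoking Lemma \ref{hminterior} to rewrite $\left(h_m(\{b\})\right)^\circ=h_m(b)$ and $\left(h_m(F)\right)^\circ=h_m(F)$, I obtain the two key equivalences $\left(h_Y(b)\right)^\circ\sub h_Y(a)\LRTA h_m(b)\sub h_m(a)$ and $\left(h_Y(F)\right)^\circ\sub h_Y(a)\LRTA h_m(F)\sub h_m(a)$.

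With these in hand, part (a) is immediate: $I$ is a $z^\circ$-ideal means $I$ is an $\H_m$-ideal, which by Proposition \ref{hyideal}(d) (with the two variables renamed) says precisely that $h_m(b)\sub h_m(a)$ together with $b\in I$ forces $a\in I$; by the first key equivalence this is exactly the stated interior condition. Part (b) goes the same way, with Proposition \ref{strong}(g) in place of Proposition \ref{hyideal}(d): $I$ is an $sz^\circ$-ideal \ff for every finite $F\sub I$, $h_m(F)\sub h_m(a)$ implies $a\in I$, which by the second key equivalence is the stated condition. To pass from the $a\in R$ form to the $S\sub R$ form in each case, I would use $h_Y(S)=\bigcap_{s\in S}h_Y(s)$, so that $\left(h_Y(b)\right)^\circ\sub h_Y(S)$ \ff $\left(h_Y(b)\right)^\circ\sub h_Y(s)$ for every $s\in S$, and then apply the single-element version to each $s$ (exactly as in the step (k)$\RTA$(a) of Proposition \ref{hyideal}).

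There is no serious obstacle here, since the two preceding lemmas carry all of the analytic content. The only points requiring care are verifying that the hypotheses of Lemma \ref{kykx=0} are met — namely $k(m)=\Rad(R)$, so that $m$ may serve as the reference set $X$ — and ensuring that the interior is cleared by Lemma \ref{hminterior} only on finite sets. This last point is exactly why the single element $b$ in (a) and the finite set $F$ in (b), rather than an arbitrary $S$, must sit on the left-hand side of the interior inclusions.
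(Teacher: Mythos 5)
Your proposal is correct and follows essentially the same route as the paper: Proposition \ref{strong} (resp.\ Proposition \ref{hyideal}) to express the $sz^\circ$- (resp.\ $z^\circ$-) condition via $h_m$, Lemma \ref{hminterior} to insert the interior on the $\Mi(R)$ side for finite sets, and Lemma \ref{kykx=0} with $X=\Mi(R)$ to transport the interior inclusion to $Y$. Your added remarks — checking $k(\Mi(R))=\Rad(R)$ and reducing the $S\sub R$ form to the single-element form via $h_Y(S)=\bigcap_{s\in S}h_Y(s)$ — only make explicit what the paper leaves as ``the other parts have similar proofs.''
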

\begin{proof}
    We prove one part and the other parts have similar proofs. By Proposition \ref{strong}, $I$ is a $sz^\circ$-ideal \ff for every finite subset $F$ of $I$, $h_m(F)\sub h_m(a)$ implies that $a\in I$; \ff for every finite subset $F$ of $I$, $\left(h_m(F)\right)^\circ\sub h_m(a)$ implies that $a\in I$, by Lemma \ref{hminterior}. Now Lemma \ref{kykx=0} concludes that this is equivalent to say, for every finite subset $F$ of $I$, it follows from $\left(h_Y(F)\right)^{\circ}\sub h_Y(a)$ that $a\in I$.
\end{proof}

Finally in the following improvement of \cite[Proposition 2.9]{aliabad2011sz}, \cite[Theorem 2.3]{Artico1981} and \cite[Proposition 2.12]{mason1973z}, we see the conditions under which every $z^{\circ}$-ideal ($sz^\circ$-ideal) is an $\H_Y$-ideal (a strong $\H_Y$-ideal).

\begin{Pro}
    If $Y\sub \Sp(R)$, then the following statements are equivalent:
    \begin{itemize}
    	\item[(a)] $k(Y)=\Rad(R)$.
    	\item[(b)] Every $z^{\circ}$-ideal is an $\H_Y$-ideal.
    	\item[(c)] Every $sz^{\circ}$-ideal is a strong $\H_Y$-ideal.
    	\item[(d)] $kh_Y(F)\sub kh_m(F)$, for every finite set $F\sub R$.
    	\item[(e)] $kh_Y(a)\sub kh_m(a)$, for every  $a\in R$.
    \end{itemize}
\end{Pro}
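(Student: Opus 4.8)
The plan is to route all the equivalences through the two computational inequalities (d) and (e), which act as a bridge between the purely algebraic statements (b), (c) and the spectral statement (a). I would first record the trivial but crucial fact that $\Rad(R)\sub k(Y)$ \emph{always} holds, since $\Rad(R)$ is contained in every prime ideal and hence in every $P\in Y$. Specializing (d) or (e) to $F=\{0\}$ gives $k(Y)=kh_Y(0)\sub kh_m(0)=\Rad(R)$, so combined with the inclusion just noted, each of these directions immediately yields (a); and (d)$\RTA$(e) is merely the case $F=\{a\}$. Thus the only substantive implication inside the chain (a)$\LRTA$(d)$\LRTA$(e) is (a)$\RTA$(d).

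The heart of the argument, and the step I expect to be the main obstacle, is (a)$\RTA$(d): assuming $k(Y)=\Rad(R)$, show $kh_Y(F)\sub kh_m(F)$ for every finite $F\sub R$. I would fix a minimal prime $P$ with $F\sub P$ and prove $kh_Y(F)\sub P$; intersecting over all such $P$ then delivers the claim (the case $h_m(F)=\tohi$ being vacuous). The key tool is the characterization of minimal primes used in the proof of Lemma \ref{hminterior}: because $P$ is a minimal prime containing $F$, there is $b\in R\set P$ with $bF\sub\Rad(R)=k(Y)$. Now take any $x\in kh_Y(F)$ and test an arbitrary $Q\in Y$: if $F\sub Q$ then $x\in Q$ by definition of $kh_Y(F)$; otherwise some $f\in F\set Q$ satisfies $bf\in k(Y)\sub Q$, which forces $b\in Q$ by primeness. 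In either case $bx\in Q$, so $bx\in k(Y)=\Rad(R)\sub P$, and since $b\notin P$ we conclude $x\in P$. The delicate point is exactly this case analysis, which converts the hypothesis $k(Y)=\Rad(R)$ into forced membership in $P$; everything surrounding it is bookkeeping.

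To attach (b) and (c) I would exploit that $kh_m(a)$ is the smallest $z^{\circ}$-ideal containing $a$ and $kh_m(F)$ the smallest $sz^\circ$-ideal containing $F$ (these are strong $\H_m$-ideals by the remark following the definition of strong $\H_Y$-ideal). For (e)$\RTA$(b): if $I$ is a $z^{\circ}$-ideal and $a\in I$, then $kh_m(a)\sub I$ by Proposition \ref{hyideal}(e) applied with $Y=\Mi(R)$, and (e) gives $kh_Y(a)\sub kh_m(a)\sub I$, so $I$ is an $\H_Y$-ideal. Conversely, for (b)$\RTA$(e), apply (b) to the $z^{\circ}$-ideal $kh_m(a)$: it is then an $\H_Y$-ideal containing $a$, whence $kh_Y(a)\sub kh_m(a)$, again by Proposition \ref{hyideal}(e). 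The pair (d)$\LRTA$(c) is verified verbatim with a finite set $F$ in place of the element $a$, using Proposition \ref{strong}(k) and the $sz^\circ$-ideal $kh_m(F)$ in place of their single-element analogues. Assembling the pieces, (a)$\LRTA$(d)$\LRTA$(e), together with (e)$\LRTA$(b) and (d)$\LRTA$(c), shows all five statements are equivalent.
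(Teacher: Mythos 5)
Your proof is correct. Note, however, that the paper does not actually prove this proposition: it simply writes ``It has a same proof as [Aliabad--Mohamadian, Proposition 2.9]'' and defers entirely to that external reference, so your self-contained argument is necessarily a different (and more useful) route than what appears in the text. The architecture you chose is sound: the reductions (d)$\RTA$(a), (e)$\RTA$(a) via $F=\{0\}$ together with the automatic inclusion $\Rad(R)\sub k(Y)$, and the translations (e)$\LRTA$(b), (d)$\LRTA$(c) via Proposition \ref{hyideal}(e) and Proposition \ref{strong}(k) applied to the ideals $kh_m(a)$ and $kh_m(F)$ (which are indeed $z^{\circ}$- and $sz^{\circ}$-ideals by the remark that $kh_Y(F)$ is always a strong $\H_Y$-ideal), are all routine and correctly executed. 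The genuine content is your (a)$\RTA$(d): fixing $P\in h_m(F)$, producing $b\notin P$ with $bF\sub\Rad(R)=k(Y)$ from the minimal-prime characterization already invoked in Lemma \ref{hminterior}, and then the case split on whether $F\sub Q$ for each $Q\in Y$ to conclude $bx\in k(Y)\sub P$ and hence $x\in P$ --- this is exactly the kind of argument the cited source must contain, and every step checks (in particular the vacuous case $h_m(F)=\tohi$ is handled, and primeness of $Q$ is what forces $b\in Q$ in the second case). The net effect is that your write-up makes the proposition verifiable within the paper itself rather than contingent on the reader consulting the reference.
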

\begin{proof}
It has a same proof as \cite[Proposition 2.9]{aliabad2011sz}.
\end{proof}

We use the following lemma frequently.

\begin{Lem}
    Let $ Y  \sub \Sp(R) $. Every $\H_Y $-ideal is a semiprime ideal.
\label{semiprime}
\end{Lem}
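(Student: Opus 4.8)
The goal is to show that every $\H_Y$-ideal is semiprime, i.e. an intersection of prime ideals, or equivalently that it is radical and closed under the relevant prime structure. The cleanest route is to prove that an $\H_Y$-ideal $I$ is a radical ideal and then exhibit it directly as an intersection of primes. The plan is to use the characterization from Proposition \ref{hyideal}(e): for every $a \in I$, we have $kh_Y(a) \subseteq I$. Since $kh_Y(a) = \bigcap\{P \in Y : a \in P\}$ is itself an intersection of prime ideals (those members of $Y$ containing $a$), this gives a foothold for expressing $I$ via prime ideals.

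First I would show that $I$ is radical. Suppose $a^n \in I$ for some $n$. Since $h_Y(a) = h_Y(a^n)$ (because a prime $P$ contains $a$ \ff it contains $a^n$), condition (d) of Proposition \ref{hyideal} applied with $a^n \in I$ and the element $a$, using $h_Y(a^n) \sub h_Y(a)$, yields $a \in I$. Hence $I$ is a radical ideal. This handles the ``intersection of primes implies radical'' direction and is the easy observation.

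For the main claim, the natural approach is to write $I = \bigcap_{P \in \Mi(I)} P$, which holds for any radical ideal, so once radicality is established the result follows immediately from the standard fact that a radical ideal equals the intersection of the minimal primes over it. Thus the core of the argument reduces to the radicality step above, and the semiprime conclusion is then automatic. I would state this explicitly: since $I$ is radical, $I = \Rad(I) = \bigcap\{P \in \Sp(R) : I \sub P\}$, which is an intersection of prime ideals, so $I$ is semiprime by definition.

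I do not expect any serious obstacle here; the only subtle point is making sure the equality $h_Y(a) = h_Y(a^n)$ is valid, which is clear since membership of a prime ideal is insensitive to powers. The proof is essentially a one-line application of the power-invariance of $h_Y$ together with characterization (d) or (e) of Proposition \ref{hyideal}, followed by the standard description of a radical ideal as an intersection of primes. An alternative, if one prefers to avoid invoking the general radical-equals-intersection-of-minimal-primes fact, is to argue directly: for $a \in I$ with $a^n \in I$ one shows $a \in I$ as above, establishing $I = \Rad(I)$, and a radical ideal is by the very definition in the Preliminaries a semiprime ideal.
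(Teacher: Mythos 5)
Your proof is correct and matches the paper's argument: the paper likewise observes that $h_Y(x)=h_Y(x^n)$ forces $x\in I$ whenever $x^n\in I$, so $I$ is radical and hence an intersection of prime ideals. The extra detail you supply (invoking $\Rad(I)=\bigcap\{P\in\Sp(R):I\sub P\}$) is exactly the step the paper leaves implicit.
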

\begin{proof}
    Suppose $x^n\in I$, so $h_Y(x)=h_Y(x^n)\in \H_Y(I)$. Thus $x\in I$.
\end{proof}

The following theorem and corollary show that the prime (strong) $\H_Y$-ideals play a vital role in the study of the (strong) $\H_Y$-ideals.

\begin{Thm}
    Let $Y \sub \Sp(R)$ and $I$ be a (strong) $\H_Y $-ideal. If $P \in \Mi(I)$, then $P$ is a (strong) $\H_Y $-ideal, too.
\label{minimal of I}
\end{Thm}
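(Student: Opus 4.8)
The plan is to reduce both statements to the membership criteria of Proposition~\ref{hyideal}(d) and Proposition~\ref{strong}(g) via the standard ``witness'' description of minimal primes. Two facts drive the argument. First, by Lemma~\ref{semiprime} every $\H_Y$-ideal is semiprime, so $I$ is radical. Second, for a minimal prime $P\in\Mi(I)$ over an ideal $I$, one has the well-known criterion (coming from the fact that $P/I$ is a minimal prime of $R/I$): for each $x\in P$ there are $s\in R\set P$ and $n\geq 1$ with $sx^n\in I$; since $I$ is semiprime this upgrades to $sx\in I$. I would first isolate this witness property, as it is the only ingredient not already formal.

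For the $\H_Y$-ideal case I would check Proposition~\ref{hyideal}(d) for $P$. Take $a\in P$ and $b\in R$ with $h_Y(a)\sub h_Y(b)$, and choose $s\in R\set P$ with $sa\in I$. The whole point is the computation
\[ h_Y(sa)=h_Y(s)\cup h_Y(a)\sub h_Y(s)\cup h_Y(b)=h_Y(sb), \]
which uses only $h_Y(xy)=h_Y(x)\cup h_Y(y)$ and the hypothesis $h_Y(a)\sub h_Y(b)$. Since $sa\in I$ and $I$ is an $\H_Y$-ideal, Proposition~\ref{hyideal}(d) forces $sb\in I\sub P$; as $P$ is prime and $s\notin P$, we get $b\in P$. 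Thus $P$ satisfies (d) and is an $\H_Y$-ideal.

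For the strong case the argument is parallel, the only new idea being to absorb the whole finite set with a single product of witnesses. Given a finite $F=\{a_1,\dots,a_n\}\sub P$ and $a\in R$ with $h_Y(F)\sub h_Y(a)$, pick $s_i\in R\set P$ with $s_ia_i\in I$ and set $s=s_1\cdots s_n$, which lies outside $P$ by primeness; then $sa_i\in I$ for every $i$, so $sF\sub I$. Using distributivity in the lattice of closed sets,
\[ h_Y(sF)=\bigcap_{i=1}^{n}\bigl(h_Y(s)\cup h_Y(a_i)\bigr)=h_Y(s)\cup h_Y(F)\sub h_Y(s)\cup h_Y(a)=h_Y(sa). \]
Since $sF$ is a finite subset of the strong $\H_Y$-ideal $I$, Proposition~\ref{strong}(g) gives $sa\in I\sub P$, and $s\notin P$ with $P$ prime yields $a\in P$, which is exactly (g) for $P$.

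The only genuinely nontrivial step is the witness property of minimal primes; everything else is formal manipulation of the identities $h_Y(xy)=h_Y(x)\cup h_Y(y)$ and the distributive law. Accordingly I expect the main obstacle to be the clean statement and use of that criterion, and in particular it is here that semiprimeness of $I$ (Lemma~\ref{semiprime}) is essential, since it is what lets us strip the exponent $n$ and replace $sx^n\in I$ by the more convenient $sx\in I$.
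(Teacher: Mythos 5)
Your proof is correct and follows essentially the same route as the paper: both rely on Lemma~\ref{semiprime} to make $I$ semiprime, extract a witness $s\notin P$ with $sF\sub I$ from the minimal-prime criterion, apply the (strong) $\H_Y$-ideal property of $I$ to the product, and cancel $s$ by primeness of $P$. The only cosmetic difference is that you verify conditions (d)/(g) by computing with the hulls $h_Y(\cdot)$ directly, whereas the paper verifies condition (k) by the dual computation with kernels, $kh_Y(F)\cap kh_Y(b)=kh_Y(bF)\sub I$.
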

\begin{proof}
	From  Lemma \ref{semiprime}, it follows that $I$ is a semiprime ideal. Now suppose that $F$ is a finite subset of $P$, so there is some $b \notin P$ such that $bF \sub I$, thus $kh_Y (S) \cap kh_Y (b) = k\big( h_Y (F) \cup h_Y (b) \big) = kh_Y (bF) \sub I \sub P$. Since $kh_Y (b) \not\sub P$, it follows that $kh_Y (F) \sub P$. Consequently, by Proposition \ref{strong},  $P$ is a strong $\H_Y $-ideal. The other part has a similar proof.
\end{proof}

The above theorem concludes the following corollary, immediately.

\begin{Cor}\label{corollary}
    If $Y\sub \Sp(R)$, then the following statements hold:
    \begin{itemize}
        \item[(a)] An ideal $I$ is a (strong) $\H_Y$-ideals \ff it is an intersection of minimal prime (strong) $\H_Y$-ideals over $I$.
        \item[(b)] Every proper maximal (strong) $\H_Y$-ideal is a prime (strong) $\H_Y$-ideal.
    \end{itemize}
\end{Cor}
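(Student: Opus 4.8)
The plan is to derive both parts directly from Theorem \ref{minimal of I}, together with the elementary observation that an arbitrary intersection of (strong) $\H_Y$-ideals is again a (strong) $\H_Y$-ideal. This intersection-stability is the one preliminary fact I would record first: if $\{I_\al\}$ is a family of $\H_Y$-ideals (resp. strong $\H_Y$-ideals) and $I=\bigcap_\al I_\al$, then taking $a\in I$ with $h_Y(a)\sub h_Y(b)$ (resp. a finite $F\sub I$ with $h_Y(F)\sub h_Y(b)$) forces $b\in I_\al$ for every $\al$ by condition (d) of Proposition \ref{hyideal} (resp. condition (g) of Proposition \ref{strong}), whence $b\in I$; so $I$ is a (strong) $\H_Y$-ideal.

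For part (a), the backward implication is then immediate: an intersection of minimal prime (strong) $\H_Y$-ideals over $I$ is in particular an intersection of (strong) $\H_Y$-ideals, hence a (strong) $\H_Y$-ideal by the remark above. For the forward implication I would first invoke Lemma \ref{semiprime} to conclude that a (strong) $\H_Y$-ideal $I$ is semiprime, and then use the standard fact that a semiprime ideal coincides with the intersection of the minimal primes lying above it, i.e. $I=\bigcap_{P\in\Mi(I)}P$. Theorem \ref{minimal of I} guarantees that each $P\in\Mi(I)$ is itself a (strong) $\H_Y$-ideal, so this representation exhibits $I$ as an intersection of minimal prime (strong) $\H_Y$-ideals over $I$, as required.

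For part (b), let $I$ be a proper maximal (strong) $\H_Y$-ideal. By part (a) I may write $I=\bigcap_{P\in\Mi(I)}P$ with each $P$ a (strong) $\H_Y$-ideal, and since $I$ is proper the set $\Mi(I)$ is nonempty. Choosing any $P\in\Mi(I)$ yields a proper (strong) $\H_Y$-ideal with $I\sub P$; maximality of $I$ among proper (strong) $\H_Y$-ideals then forces $I=P$, so $I$ is prime.

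I do not anticipate a genuine obstacle, as the corollary is essentially bookkeeping layered on Theorem \ref{minimal of I}; the only point deserving a moment's care is the classical identity $I=\bigcap_{P\in\Mi(I)}P$ for a semiprime ideal $I$, which underwrites the forward direction of (a). Everything else reduces to the definitions and the intersection-stability remark.
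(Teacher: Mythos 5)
Your proposal is correct and follows exactly the route the paper intends: the paper states the corollary as an immediate consequence of Theorem \ref{minimal of I} without writing out details, and your argument (intersection-stability of (strong) $\H_Y$-ideals, Lemma \ref{semiprime} plus the representation of a semiprime ideal as the intersection of its minimal primes, and the maximality argument for (b)) is precisely the bookkeeping being left to the reader.
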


We turn  our attention now to considering the situations under which strong $\H_Y$-ideals and  $\H_Y$-ideals coincide. A ring $R$ is said to have the $h_Y$-property if for every $a,b \in R$, there is some $c \in R$ such that $h_Y(a) \cap h_Y(b) = h_Y(c)$. Clearly, this is equivalent to say that for any finite subset $F$ of $R$, there is some $c \in R$ such that $h_Y(F) = h_Y(c)$. Clearly, if  $Y\sub \Sp(R)$ and $R$ satisfies $h_Y$-property (for example if $R$ is B\'{e}zout domain), then the family of all $\H_Y$-ideals and the family of all strong $\H_Y$-ideals coincide. Also, the same fact is true in $C(X)$, since for every prime ideal $P$ of $C(X)$, ‌we have$f^2+g^2\in P$ \ff $f,g\in P$  and consequently $h_Y(f)\cap h_Y(g)=h_Y(f^2+g^2)$, for every $Y\sub \Sp(C(X))$ and every $f,g\in C(X)$. However, in Example \ref{converse is not true}, we show that the converse of this fact is not true.

Recall that a ring $R$ is said to satisfy annihilator condition (is called an a.c. ring), if for each finite set $F\sub R$ there is some $c\in R$ such that $\An(F)=\An(c)$. If $k(Y)=\Ge{0} $ and $R$ has $h_Y$-property, then $R$ is an a.c. ring. To see this, suppose $a,b\in R$ are given, then there exists some $c\in R$ such that $h_Y(a)\cap h_Y(b)=h_Y(c)$. Therefore using Lemma \ref{introduction} we have,
\[h_Y^c(a)\cup h_Y^c(b)=(h_Y(a)\cap h_Y(b))^c=h_Y^c(c)\RTA\]
\[kh_Y^c(a)\cap kh_Y^c(b)=k(h_Y^c(a)\cup h_Y^c(b))=kh_Y^c(c) \RTA Ann(a)\cap Ann(b)=Ann(c).\]
One can easily see that if $h_Y^c(a)$ is a closed set, for every $a\in R$, then the converse is also true, for example $\Mi(R)$ has this property, see \cite[Theorem 2.3]{henriksen1965space}.

Suppose that $Y\sub\Mi(R)$, then \cite[Theroems 2.2 and 2.3]{henriksen1965space} imply that $h^c_Y(F)$ is closed in $Y$, for every finite subset $F$ of $R$. Now, clearly, if $I$ is an arbitrary ideal of $R$, then the mapping $a\TOO a+I$ induces a homeomorphism from $\Mi(R/I)$ to $\Mi(I)$. Consequently, if $Y\sub\Mi(I)$, then $h^c_Y(F)$ is closed in $Y$, for every finite subset $F$ of $R$. Using this fact, we have the following proposition, which characterizes the $h_Y$-property  when $I=k(Y)$ and $Y\sub\Mi(I)$.

\begin{Pro}
    Let $ Y \sub \Sp(R) $ and $ I = k(Y) $. If $ Y \sub \Mi(I) $, then the following statements are equivalent.
    \begin{itemize}
		\item[(a)] $R$ has $h_Y$-property.
        \item[(b)] For all finite sets $F\sub R$, there is some $c \in R$ such that $(I:F) = (I:c)$.
        \item[(c)] For every  $a,b\in R$, there is some $c \in R$ such that $(I:a) \cap (I:b) = (I:c)$.
        \item[(d)] $ R/I $ is an a.c. ring.
\end{itemize}
\label{H_Y and strong H_Y coincide in Minp(R)}
\end{Pro}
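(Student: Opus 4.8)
The plan is to prove the two algebraic equivalences (b) $\LRTA$ (c) $\LRTA$ (d) directly, and the topological equivalence (a) $\LRTA$ (c) separately, with the hypothesis $Y\sub\Mi(I)$ entering only through the latter. The single fact I would extract first from that hypothesis is the identity $h_Y^c(a)=h_Y((I:a))$ for every $a\in R$. Indeed, since $Y\sub\Mi(I)$, the set $h_Y^c(a)$ is closed in $Y$ (as recorded just before the statement), so
\[ h_Y^c(a)=\ov{h_Y^c(a)}=h_Yk h_Y^c(a)=h_Y\big((I:a)\big), \]
where the middle equality is $\ov{S}=h_Yk(S)$ and the last is Lemma \ref{introduction}. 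I expect this identity to be the hinge of the whole argument.

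For the algebraic equivalences I would use $(I:F)=\bigcap_{a\in F}(I:a)$ throughout. Then (b) $\RTA$ (c) is just the case $F=\{a,b\}$, while (c) $\RTA$ (b) follows by induction on $|F|$, combining two colon ideals at a time. For (b) $\LRTA$ (d), I would pass to $R/I$: the annihilator of $a+I$ is exactly $(I:a)/I$, so the annihilator of the finite set $\{a+I:a\in F\}$ is $(I:F)/I$. Hence $R/I$ is an a.c. ring precisely when for each finite $F$ there is $c$ with $(I:F)/I=(I:c)/I$, i.e. $(I:F)=(I:c)$, which is (b). Neither of these uses the topology or the hypothesis $Y\sub\Mi(I)$.

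It remains to prove (a) $\LRTA$ (c). For (a) $\RTA$ (c), given $a,b$ I take $c$ with $h_Y(a)\cap h_Y(b)=h_Y(c)$; passing to complements gives $h_Y^c(a)\cup h_Y^c(b)=h_Y^c(c)$, and applying $k$ together with $k(A\cup B)=k(A)\cap k(B)$ and Lemma \ref{introduction} yields $(I:a)\cap(I:b)=(I:c)$, with no closedness needed. For (c) $\RTA$ (a), given $a,b$ I take $c$ with $(I:a)\cap(I:b)=(I:c)$ and compute, using the extracted identity for $a$, $b$ and $c$,
\begin{align*}
h_Y^c(a)\cup h_Y^c(b) &= h_Y\big((I:a)\big)\cup h_Y\big((I:b)\big)=h_Y\big((I:a)\cap(I:b)\big)\\
&= h_Y\big((I:c)\big)=h_Y^c(c),
\end{align*}
where the second equality uses that over prime ideals $h_Y(J)\cup h_Y(K)=h_Y(J\cap K)$ (immediate from $JK\sub J\cap K$ and primeness). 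Taking complements gives $h_Y(a)\cap h_Y(b)=h_Y(c)$, i.e. (a).

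The main obstacle is exactly this last passage (c) $\RTA$ (a): transferring an equality of colon ideals back to an equality of the closed sets $h_Y(\cdot)$. In general $h_Y^c(a)$ is only dense in $h_Y((I:a))$, so without extra hypotheses the two need not coincide and the transfer fails; it is precisely $Y\sub\Mi(I)$, through the closedness of $h_Y^c(a)$, that upgrades this density to the equality $h_Y^c(a)=h_Y((I:a))$ and makes the step legitimate.
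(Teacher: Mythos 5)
Your proof is correct and rests on exactly the same mechanism as the paper's: Lemma \ref{introduction} together with the closedness of $h_Y^c$ of finite sets under the hypothesis $Y\sub\Mi(I)$, which upgrades $h_Y^c(\cdot)\sub h_Yk h_Y^c(\cdot)$ to an equality and lets you transfer an identity of colon ideals back to one of the sets $h_Y(\cdot)$. The only cosmetic difference is that you link (a) to (c) via pairs (using $h_Y(J)\cup h_Y(K)=h_Y(J\cap K)$ over prime ideals) where the paper links (a) to (b) via finite sets directly; both routes are equally valid.
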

\begin{proof}
	(a) $\RTA$ (b). Let $F$ be a finite subset of $R$ and $h_Y(F)=h_Y(c)$ for some $c\in R$, then by Lemma \ref{introduction}, $(I:F)=kh_Y^c(F)=kh_Y^c(c)=(I:c)$.

    (b) $\RTA$ (a). Let $F$ be a finite subset of $R$. Since $Y\sub\Mi(I)$, $h_Y^c(F)$ and $h_Y^c(c)$ are closed sets, using Lemma \ref{introduction} and the assumption we have
    \[ h_Y^c(F)=h_Ykh_Y^c(F)=h_Y((I:F))=h_Y((I:c))=h_Ykh_Y^c(c)=h_Y^c(c).\]
    Consequently $h_Y(F)=h_Y(c)$.

	(b) $\LRTA$ (c). Since $(I:A)\cap (I:B)=(I:A\cup B)$, for every $A,B\sub R$,  it is evident.
	
    (c) $\LRTA$ (d). Clearly, for every $x\in R$, we have $\An(x+I)=\frac{(I:x)}{I}$. Therefore, we can write 
    \[\An(a+I)\cap\An(b+I)=\An(c+I)~~\LRTA~~\frac{(I:a)}{I} \cap \frac{(I:b)}{I}=\frac{(I:c)}{I}\]
    \[\LRTA~~\frac{(I:a) \cap (I:b) }{I}= \frac{(I:c)}{I}~~\LRTA~~(I:a)\cap(I:b)=(I:c).\]
    \end{proof}

\begin{Cor}
    Let $ Y \sub \Sp(R) $ and $ I = k(Y) $. If one of the following conditions holds, then the family of all $\H_Y$-ideals and the family of all strong $\H_Y$-ideals coincide.
    \begin{itemize}
    	\item[(a)] $Y$ is a fixed-place family and $R/I$ is an a.c. ring.
    	\item[(b)] $Y$ is a strong fixed-place family.
    \end{itemize}
\label{H_Y and strong H_Y coincide in fixed-place}
\end{Cor}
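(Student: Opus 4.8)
The plan is to reduce both cases to the $h_Y$-property and then invoke Proposition~\ref{H_Y and strong H_Y coincide in Minp(R)}. Recall the remark preceding that proposition: if $R$ has the $h_Y$-property, then the $\H_Y$-ideals and the strong $\H_Y$-ideals coincide. Hence in each case it suffices to verify that $R$ enjoys the $h_Y$-property. The first step I would take is to record the structural fact that a fixed-place family and a strong fixed-place family are, by definition, families of minimal primes over their own kernel, so that $Y\sub\Mi(I)$ with $I=k(Y)$. This is precisely the standing hypothesis of Proposition~\ref{H_Y and strong H_Y coincide in Minp(R)}; consequently, for such $Y$, the $h_Y$-property is equivalent both to $R/I$ being an a.c. ring and to the condition that for every finite $F\sub R$ there is some $c\in R$ with $(I:F)=(I:c)$.

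With this reduction in hand, case (a) is immediate: $Y\sub\Mi(I)$ holds because $Y$ is a fixed-place family, and $R/I$ is an a.c. ring by hypothesis, so Proposition~\ref{H_Y and strong H_Y coincide in Minp(R)} yields the $h_Y$-property and therefore the asserted coincidence.

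For case (b) the substantive step is to show that the strong fixed-place hypothesis on $Y$ already forces the a.c. condition on $R/I$; concretely, given a finite $F\sub R$ I would produce a single $c\in R$ with $(I:F)=(I:c)$, which is condition~(b) of Proposition~\ref{H_Y and strong H_Y coincide in Minp(R)}. The strategy is to pass to the reduced ring $R/I$---whose minimal prime spectrum is identified with $\Mi(I)\supseteq Y$ via the homeomorphism used before Proposition~\ref{H_Y and strong H_Y coincide in Minp(R)}---translate the strong fixed-place property into annihilator data on these primes, and then glue these data into a single element $c$ with $\An_{R/I}(\bar c)=\An_{R/I}(\bar F)=\bigcap_{s\in F}(I:s)/I$. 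I expect this gluing---realizing the finite intersection $\bigcap_{s\in F}(I:s)$ as a single colon ideal $(I:c)$ using only the strong fixed-place structure---to be the main obstacle; once $R/I$ is seen to be an a.c. ring, the same application of Proposition~\ref{H_Y and strong H_Y coincide in Minp(R)} finishes the case, while case (a) is essentially bookkeeping and the passage through the $h_Y$-property remark is formal.
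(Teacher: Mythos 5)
Your treatment of part (a) is correct and is essentially the paper's own argument: a fixed-place family satisfies $Y=\mathcal{B}(I)\sub\Mi(I)$ (the paper obtains this from Theorem 2.1 of the fixed-place reference rather than directly from the definition, but the substance is the same), so Proposition \ref{H_Y and strong H_Y coincide in Minp(R)} converts the a.c.\ hypothesis on $R/I$ into the $h_Y$-property, and the observation preceding that proposition then gives the coincidence of the $\H_Y$-ideals with the strong $\H_Y$-ideals.

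Part (b), however, is not yet a proof. The one substantive claim --- that the strong fixed-place hypothesis forces $R/I$ to be an a.c.\ ring, i.e.\ that every finite intersection $\bigcap_{s\in F}(I:s)$ is realized as a single colon ideal $(I:c)$ --- is precisely the step you defer, and you explicitly flag the ``gluing'' as an expected obstacle without carrying it out. Since the notion of a strong fixed-place family is not even defined in this paper, nothing in your sketch lets a reader reconstruct that step. The paper closes this gap by citation: it invokes Theorem 2.10 and Corollary 2.11 of the companion paper on Bourbaki associated prime divisors, which show that a strong fixed-place family satisfies the hypotheses of part (a) (it is a fixed-place family with $R/I$ an a.c.\ ring), and then applies part (a). To make your argument self-contained you would need to state the definition of a strong fixed-place family and actually derive the a.c.\ condition from it; as written, part (b) reduces the corollary to an unproven assertion.
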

\begin{proof}
	(a). Since $Y$ is a fixed-place family, $ Y = {\cal{B}}(I) \sub \Mi(R) $, by \cite[Theorem 2.1]{aliabad2013fixed}. Now Proposition \ref{H_Y and strong H_Y coincide in Minp(R)}, completes the proof.
	
	(b). It follows immediately form \cite[Theorem 2.10, Corollary 2.11 and Theorem 2.10]{aliabad2017Bourbarki} and part (a).
\end{proof}

\begin{Exa}
	Suppose that $ X = \Sp(R) $ and $ Y = \Mi(R) $. In \cite[Example 3.3]{henriksen1965space}, a ring $ R $  is given  which does not satisfy annihilator condition, so $ R $ does not satisfy $ h_Y $-property, by Proposition \ref{H_Y and strong H_Y coincide in Minp(R)}. Thus $ R $  does not satisfy  $ h_X $-property, whereas the family of all $ \H_X $-ideals coincides with the family all strong $ \H_X $-ideals.
\label{converse is not true}
\end{Exa}

\section{Correspondence between $\H_Y $-filters and strong $\H_Y $-ideals}

In this section  we study the relation  and the correspondence between the strong $\H_Y$-ideals, the $\H_Y$-ideals and the $H_Y$-filters. First recall that if $E$ and $F$ are two partial ordered sets, then an order preserving mapping $f:E\TOO F$ is said to be residuated whenever there exists an order preserving mapping $g:F\TOO E$ such that $id_E\leq gf$ and $id_F\geq fg$; moreover, $g$ is unique and it is called the residual of $f$. The set of all $\H_Y$-filters on $Y\sub \Sp(R)$ is denoted by ${\cal{F}}_Y$.

In the following proposition we state the properties of the mappings $\H_Y$ and $\H^{-1}_Y$ and the image and preimage of ideals and filters under them, respectively.

\begin{Pro}
    Let $Y  \sub \Sp(R)$, $I\in {\cal{I}}(R)$ and  $\mathscr{F}\in {\cal{F}}_Y$. The following statements hold.
    \begin{itemize}
        \item[(a)] $\H_Y^{-1}(\mathscr{F})=R$ \ff $\mathscr{F}=\H_Y$.
        \item[(b)]  $I \sub \H_Y^{-1}\H_Y(I)$ and $\H_Y\H_Y^{-1}(\mathscr{F})=\mathscr{F}$.
        \item[(c)] $\H_Y$ is a residuated mapping from ${\cal{I}}(R)$  to ${\cal{F}}_Y$, and  $\H^{-1}_Y$ is the residual of $\H_Y$. Consequently, $\H_Y\H_Y^{-1}\H_Y=\H_Y$ and $\H_Y^{-1}\H_Y\H_Y^{-1}=\H_Y^{-1}$.
        \item[(d)] $I$ is a strong $\H_Y$-ideal \ff $I={\cal{H}}_Y^{-1}\H_Y (I)$.
        \item[(e)]  $\H_Y^{-1}(\mathscr{F})$ is a strong $\H_Y $-ideal of $R$.
        \item[(f)] If $\Ma(R)\sub Y  $, then $\H_Y (I)$ is a proper $\H_Y $-filter on $Y $, for every proper ideal $I$ of $R$.
        \item[(g)] If $I$ is a proper strong $\H_Y $-ideal of $R$, then $\H_Y (I)$ is a proper $\H_Y $-filter on $Y $.
    \end{itemize}
\label{Primary property}
\end{Pro}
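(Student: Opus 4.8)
The plan is to treat this entire proposition as the bookkeeping of a residuated pair (a Galois connection) between the ideal lattice ${\cal{I}}(R)$ and the filter lattice ${\cal{F}}_Y$, so that once the two adjunction relations packaged in part (b) are in hand, parts (a), (c), (d) and (e) follow almost formally, while (f) and (g) require a separate, genuinely different, properness argument. The workhorse throughout is Lemma \ref{finite subset of Y -1}(a), which translates membership $h_Y(F)\in\mathscr{F}$ into containment $F\sub\H_Y^{-1}(\mathscr{F})$.

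First I would dispatch (a): if $\mathscr{F}=\H_Y$ then every $h_Y(a)$ lies in $\mathscr{F}$, so $\H_Y^{-1}(\mathscr{F})=R$; conversely, if $\H_Y^{-1}(\mathscr{F})=R$ then every finite $F\sub R$ sits inside $\H_Y^{-1}(\mathscr{F})$, whence $h_Y(F)\in\mathscr{F}$ by Lemma \ref{finite subset of Y -1}(a), and since every member of $\H_Y$ has the form $h_Y(F)$, we get $\mathscr{F}=\H_Y$. For (b), the inclusion $I\sub\H_Y^{-1}\H_Y(I)$ is immediate because $a\in I$ gives $h_Y(a)\in\H_Y(I)$. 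For the equality $\H_Y\H_Y^{-1}(\mathscr{F})=\mathscr{F}$ I would prove both inclusions via the same lemma: a typical member $h_Y(F)$ of $\H_Y\H_Y^{-1}(\mathscr{F})$ has $F\sub\H_Y^{-1}(\mathscr{F})$, so $h_Y(F)\in\mathscr{F}$; conversely, writing an arbitrary element of $\mathscr{F}$ as $h_Y(F)$ forces $F\sub\H_Y^{-1}(\mathscr{F})$, so $h_Y(F)\in\H_Y\H_Y^{-1}(\mathscr{F})$.

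With (b) in place, (c) is formal: both $\H_Y$ and $\H_Y^{-1}$ are visibly order preserving, and the relations $id_{{\cal{I}}(R)}\leq\H_Y^{-1}\H_Y$ and $\H_Y\H_Y^{-1}\leq id_{{\cal{F}}_Y}$ are exactly (b), so $\H_Y$ is residuated with residual $\H_Y^{-1}$; the two triple identities then drop out of $\H_Y\H_Y^{-1}=id$. Part (d) is the crux of the identification: since $\H_Y^{-1}\H_Y(I)=\{a\in R:h_Y(a)\in\H_Y(I)\}$ and $I\sub\H_Y^{-1}\H_Y(I)$ is automatic by (b), the equality $I=\H_Y^{-1}\H_Y(I)$ amounts to the reverse inclusion, which is precisely condition (d) of Proposition \ref{strong} characterising strong $\H_Y$-ideals. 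Part (e) then follows by applying (d) to the ideal $\H_Y^{-1}(\mathscr{F})$ (an ideal by Lemma \ref{finite subset of Y -1}(b)) together with the identity $\H_Y^{-1}\H_Y\H_Y^{-1}=\H_Y^{-1}$ from (c).

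The only steps needing an argument beyond this adjunction formalism are (f) and (g), and there I expect the main (though still modest) obstacle: showing $\H_Y(I)$ is a \emph{proper} filter, i.e. that the bottom element $\tohi=h_Y(R)$ never lies in it. Suppose instead $h_Y(F)=\tohi$ for some finite $F\sub I$. For (f), with $\Ma(R)\sub Y$, the proper ideal $I$ lies in some maximal ideal $M\in Y$, so $F\sub I\sub M$ would force $M\in h_Y(F)$, contradicting $h_Y(F)=\tohi$. For (g), if $I$ is a proper strong $\H_Y$-ideal then condition (k) of Proposition \ref{strong} gives $kh_Y(F)\sub I$; but $h_Y(F)=\tohi$ yields $kh_Y(F)=k(\tohi)=R$, so $I=R$, contradicting properness. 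In both cases $\tohi\notin\H_Y(I)$, and since $\H_Y(I)$ is a filter by Lemma \ref{finite subset of Y -1}(c), it is a proper $\H_Y$-filter.
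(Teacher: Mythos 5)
Your proposal is correct and follows essentially the same route as the paper: establish the adjunction identities of (b) via Lemma \ref{finite subset of Y -1}(a), deduce (a), (c), (d), (e) formally from the residuated-pair formalism, and treat (f) and (g) by separate properness arguments. The only cosmetic differences are that the paper verifies (a) by testing the single element $1$ and derives (g) from parts (d) and (a) rather than directly from $k(\tohi)=R$, but these are equivalent in substance.
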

\begin{proof}
    (a). $\H_Y^{-1}(\mathscr{F})=R~~\LRTA~~1\in\H_Y^{-1}(\mathscr{F})~~\LRTA~~h_Y(1)\in\mathscr{F}~~\LRTA~~\emptyset\in\mathscr{F}~~\LRTA~~\mathscr{F}=\H_Y.$

	(b). The first part is readily verified. Using Lemma \ref{finite subset of Y -1}(a), for every finite subset $F$ of $R$ we have
	\[  h_Y (F) \in \mathscr{F} \LRTA F \sub \H_Y^{-1}(\mathscr{F}) \LRTA h_Y (F)\in \H_Y\H_Y^{-1}(\mathscr{F}). \]

	(c). By part (b) and Lemma \ref{finite subset of Y -1}, the first part is trivial. For the second part see  \cite[Theorem1.5]{blythlattices}.

	(d). Let $I$ be a strong $\H_Y$-ideal. If $a\in \H_Y^{-1}\H_Y(I)$, then $h_Y(a)\in \H_Y(I)$ and so by Proposition \ref{strong}, $a\in I$. Now by  part (b), $I={\cal{H}}_Y^{-1}\H_Y (I)$. Conversely, suppose that $h_Y(a)\in \H_Y(I)$, whence $a\in \H_Y^{-1}\H_Y(I)=I$, therefore, by Proposition \ref{strong}, $I$ is a strong $\H_Y$-ideal.

	(e). Clearly, by part (c) we have $\H_Y^{-1}\H_Y\H_Y^{-1}(\mathscr{F})=\H_Y^{-1}(\mathscr{F})$, for every $\H_Y$-filter $\mathscr{F}$ on $Y$ and thus by part (d), $\H_Y^{-1}(\mathscr{F})$ is a strong $\H_Y$-ideal of $R$.

	(f). On the contrary, let $\emptyset \in \H_Y(I)$, then $ \emptyset = h_Y(F)$, for some finite set $F\sub I$, now by hypothesis $\Ge{F}=R$, which is a contradiction.

	(g). Since $R\ne I=\H_Y^{-1}\H_Y(I)$,  by part (a), it follows that $\H_Y(I)$ is a proper $\H_Y$-filter.
\end{proof}

The following corollary is an immediate consequence of the above proposition which gives  a correspondence between strong $\H_Y$-ideals and $H_Y$-filters.

\begin{Cor}
\label{coro 2.16}
    The following facts hold.
    \begin{itemize}
        \item[(a)] Suppose $ \mathscr{F} $ and $ \mathscr{G} $ are two $ \H_Y  $-filters on $Y  \sub \Sp(R)$. Then $ \mathscr{F} = \mathscr{G} $ \ff $\H_Y^{-1} (\mathscr{F}) =\H_Y^{-1} (\mathscr{G} ) $.
        \item[(b)] If $I$ and $J$ are two strong $\H_Y$-ideals then $\H_Y(I)=H_Y(J)$ \ff $I=J$.
        \item[(c)] The mapping $\H_Y$ is an order isomorphism from the set of all strong $\H_Y$-ideals onto the set of all $\H_Y$-filters on $Y$.
    \end{itemize}
\end{Cor}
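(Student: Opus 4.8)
The plan is to read off all three parts directly from Proposition \ref{Primary property}, treating the corollary as a repackaging of the residuated-pair identities established there; no new machinery is needed.

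For (a) the forward implication is immediate, and for the converse I would apply $\H_Y$ to the hypothesis $\H_Y^{-1}(\mathscr{F})=\H_Y^{-1}(\mathscr{G})$ and invoke the identity $\H_Y\H_Y^{-1}(\mathscr{F})=\mathscr{F}$ from Proposition \ref{Primary property}(b), which holds for every $\H_Y$-filter, giving $\mathscr{F}=\H_Y\H_Y^{-1}(\mathscr{F})=\H_Y\H_Y^{-1}(\mathscr{G})=\mathscr{G}$. Part (b) is dual: the converse is trivial, and for the forward direction I would use that, since $I$ and $J$ are strong $\H_Y$-ideals, Proposition \ref{Primary property}(d) yields $I=\H_Y^{-1}\H_Y(I)$ and $J=\H_Y^{-1}\H_Y(J)$, so applying $\H_Y^{-1}$ to $\H_Y(I)=\H_Y(J)$ forces $I=J$.

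For (c) I would assemble these facts into an order isomorphism. By construction $\H_Y$ carries strong $\H_Y$-ideals into $\H_Y$-filters; part (b) makes it injective; and surjectivity follows from Proposition \ref{Primary property}(e) together with (b): for any $\H_Y$-filter $\mathscr{F}$ the ideal $\H_Y^{-1}(\mathscr{F})$ is a strong $\H_Y$-ideal with $\H_Y(\H_Y^{-1}(\mathscr{F}))=\mathscr{F}$, so $\mathscr{F}$ lies in the image. Since $\H_Y$ is order preserving as a residuated map, and its inverse on this restricted domain is exactly $\H_Y^{-1}$, which is likewise order preserving, both the bijection and its inverse respect the order, which is the definition of an order isomorphism.

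Because every step is a direct substitution into identities already proved in Proposition \ref{Primary property}, I do not expect any genuine obstacle. The only point meriting a moment's care is in (c): one must confirm that the set-theoretic inverse of $\H_Y$ restricted to strong $\H_Y$-ideals coincides with $\H_Y^{-1}$ restricted to $\H_Y$-filters, so that order-preservation of the inverse map is precisely the order-preservation of $\H_Y^{-1}$ guaranteed by the residuated structure.
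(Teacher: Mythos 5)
Your proposal is correct and matches the paper exactly: the paper states this corollary as an immediate consequence of Proposition \ref{Primary property}, and your derivation uses precisely the identities $\H_Y\H_Y^{-1}(\mathscr{F})=\mathscr{F}$, $I=\H_Y^{-1}\H_Y(I)$ for strong $\H_Y$-ideals, and part (e) for surjectivity, together with order-preservation from the residuated structure. Nothing is missing.
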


In the following theorem we try to present  a correspondence between prime (maximal) strong $\H_Y$-ideals and prime (maximal) $H_Y$-filters.

\begin{Thm}
    Let $Y  \sub \Sp(R)$, $I\in {\cal{I}}(R)$ and  $\mathscr{F}\in {\cal{F}}_Y$. The following statements hold.
    \begin{itemize}
        \item[(a)] $\H_Y^{-1}(\mathscr{F})$ is a prime strong  $\H_Y $-ideal \ff $ \mathscr{F} $ is a prime $\H_Y $-filter.
		\item[(b)] If $I$ is a strong  $\H_Y $-ideal, then $I$ is a prime ideal of $R$ \ff $\H_Y (I)$ is a prime $\H_Y $-filter.
		\item[(c)] The mapping $\H_Y$ is one-to-one from the set of all prime strong $\H_Y$-ideals onto the set of all prime $\H_Y$-filters.
		\item[(d)] An ideal $I$ of $R$ is a maximal proper strong $\H_Y$-ideal \ff there exists an $\H_Y$-ultrafilter $\mathscr{F}$ such that $I=\H_Y^{-1}(\mathscr{F})$. In addition the mapping $\H_Y$ is one-to-one from the set of all maximal proper strong $\H_Y$-ideals onto the set of all  $\H_Y$-ultrafilters.
		\item[(e)] Assume that $\Ma(R)\sub Y$. If $I$ is a maximal ideal, then $\H_Y(I)$  is an $\H_Y$-ultrafilter. Supposing $I$ is a strong $\H_Y$-ideal, the converse is also true.
        \item[(f)] If $ \Ma(R) \sub Y  $, then $ \mathscr{F} $ is an $\H_Y $-ultrafilter \ff $\H_Y^{-1}(\mathscr{F})$ is a maximal ideal.
    \end{itemize}
\label{maxiaml storng and ultra}
\end{Thm}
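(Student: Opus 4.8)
The plan is to treat the whole theorem as a transport of structure along the order isomorphism $\H_Y$ between strong $\H_Y$-ideals and $\H_Y$-filters furnished by Corollary \ref{coro 2.16}(c), using two dictionary entries throughout: first, the lattice join in $\H_Y$ is the union, and $h_Y(F)\cup h_Y(G)=h_Y(FG)$ (in particular $h_Y(a)\cup h_Y(b)=h_Y(ab)$); second, by Lemma \ref{finite subset of Y -1}(a), $h_Y(F)\in\mathscr{F}$ \ff $F\sub\H_Y^{-1}(\mathscr{F})$. Everything else is the bookkeeping already packaged in Proposition \ref{Primary property}.

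Part (a) is the heart of the matter. Since $\H_Y^{-1}(\mathscr{F})$ is automatically a strong $\H_Y$-ideal by Proposition \ref{Primary property}(e), the phrase ``prime strong $\H_Y$-ideal'' just adds the requirement that $P:=\H_Y^{-1}(\mathscr{F})$ be a prime (hence proper) ideal, and by Proposition \ref{Primary property}(a) this matches the convention that prime $\H_Y$-filters are proper. For $(\LTA)$ I would take $ab\in P$, rewrite $h_Y(ab)=h_Y(a)\cup h_Y(b)\in\mathscr{F}$, and read off $a\in P$ or $b\in P$ from primeness of $\mathscr{F}$. For $(\RTA)$, given $h_Y(F)\cup h_Y(G)=h_Y(FG)\in\mathscr{F}$ I would use Lemma \ref{finite subset of Y -1}(a) to get $FG\sub P$ and then conclude $F\sub P$ or $G\sub P$ by the usual primeness argument: if some $f\in F$ and some $g\in G$ both escaped $P$, then $fg\in FG\sub P$ would contradict $P$ prime. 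This prime-avoidance step is the only genuinely non-formal point in the whole proof; the rest is diagram-chasing.

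Parts (b)--(d) then follow formally. For (b) I would set $\mathscr{F}=\H_Y(I)$; since $I$ is a strong $\H_Y$-ideal, $\H_Y^{-1}(\mathscr{F})=I$ by Proposition \ref{Primary property}(d), so part (a) immediately gives ``$I$ prime $\LRTA$ $\H_Y(I)$ prime''. Part (c) is the restriction of the isomorphism of Corollary \ref{coro 2.16}(c): injectivity is inherited, part (b) shows prime strong $\H_Y$-ideals land among prime filters, and surjectivity comes from $\H_Y\H_Y^{-1}(\mathscr{F})=\mathscr{F}$ together with Proposition \ref{Primary property}(e) and part (a). For (d) I would note that the isomorphism $\H_Y$ carries the top $R$ to the top $\H_Y$ (indeed $\H_Y(R)=\H_Y$, the improper filter, since $h_Y(1)=\tohi$), hence restricts to an order isomorphism of \emph{proper} strong $\H_Y$-ideals onto \emph{proper} $\H_Y$-filters and therefore matches maximal elements on the two sides, i.e. maximal proper strong $\H_Y$-ideals with $\H_Y$-ultrafilters; the ``there exists $\mathscr{F}$'' reformulation is obtained by taking $\mathscr{F}=\H_Y(I)$ in one direction (using Proposition \ref{Primary property}(d)) and, in the other, checking via Proposition \ref{Primary property}(a),(b),(e) that $\H_Y^{-1}(\mathscr{F})$ is a proper strong $\H_Y$-ideal with $\H_Y$-image $\mathscr{F}$.

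Finally, the assumption $\Ma(R)\sub Y$ enters only in (e) and (f). The key observation I would record first is that under $\Ma(R)\sub Y$ every maximal ideal $M$ satisfies $kh_Y(M)=M$ (as $M\in Y$ lies in $h_Y(M)$), so $M$ is a $Y$-Hilbert ideal, in particular a proper strong $\H_Y$-ideal, and being a maximal ideal it is a fortiori a maximal proper strong $\H_Y$-ideal; applying (d) gives that $\H_Y(M)$ is an ultrafilter. For the converse in (e), if $I$ is a strong $\H_Y$-ideal with $\H_Y(I)$ an ultrafilter, then (d) makes $I$ a maximal proper strong $\H_Y$-ideal; choosing a maximal ideal $M\supseteq I$, which is again a proper strong $\H_Y$-ideal by the observation, maximality forces $I=M$. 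Part (f) is then immediate: for $(\RTA)$ combine (d) with the converse of (e) applied to $I=\H_Y^{-1}(\mathscr{F})$ (noting $\H_Y(I)=\mathscr{F}$ by Proposition \ref{Primary property}(b)), and for $(\LTA)$ apply the forward direction of (e) to $I=\H_Y^{-1}(\mathscr{F})$ and again use $\H_Y\H_Y^{-1}(\mathscr{F})=\mathscr{F}$. I expect the prime-avoidance step in (a) and the correct handling of the ``proper'' conventions to be the only places demanding care.
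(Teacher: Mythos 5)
Your proposal is correct and follows essentially the same route as the paper: part (a) via the identity $h_Y(F)\cup h_Y(G)=h_Y(FG)$ together with Lemma \ref{finite subset of Y -1}(a) and the prime-avoidance observation (which the paper leaves implicit but you rightly spell out), and parts (b)--(f) by transporting structure through Proposition \ref{Primary property} and Corollary \ref{coro 2.16}. The only cosmetic difference is in (d), where the paper extends $\H_Y(I)$ to an ultrafilter by Zorn's lemma and then identifies $I$ with its preimage, while you read off the matching of maximal elements directly from the order isomorphism restricted to proper objects; both arguments are sound.
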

\begin{proof}
    (a $\RTA$). Clearly, by Proposition \ref{Primary property}, $\H_Y^{-1}(\mathscr{F})$ is a proper ideal \ff $ \mathscr{F} $ is a proper $\H_Y$-filter. Now, suppose that $F_1$ and $F_2$ are two finite subsets of $R$ and  $ h_Y (F_1) \cup h_Y (F_2) \in \mathscr{F} $, then $ h_Y (F_1 F_2) \in \mathscr{F} $, so $ F_1 F_2 \sub \H_Y^{-1}(\mathscr{F}) $, by Lemma \ref{finite subset of Y -1}. Thus, either $ F_1 \sub \H_Y^{-1}(\mathscr{F})$ or $ F_2 \sub \H_Y^{-1}(\mathscr{F})$ and therefore either $ h_Y (F_1) \in \mathscr{F} $ or $ h_Y (F_2) \in \mathscr{F} $. Hence $ \mathscr{F} $ is a prime $\H_Y $-filter.

    (a $\LTA$). Suppose $ a b \in \H_Y^{-1}(\mathscr{F})$, then $ h_Y (ab) \in \mathscr{F} $, so $ h_Y (a) \cup h_Y (b) \in \mathscr{F} $, thus either $ h_Y (a) \in \mathscr{F} $ or $ h_Y (b) \in \mathscr{F} $, and therefore either $ a \in \H_Y^{-1}(\mathscr{F})$ or $ b \in \H_Y^{-1}(\mathscr{F})$. Hence, $\H_Y^{-1}(\mathscr{F})$ is a prime ideal.

    (b). It can be obtained easily by the previous part and Proposition \ref{Primary property}.

    (c). It is straightforward by using parts (a) and (b) as well as Corollary \ref{coro 2.16}.

    (d $\RTA$). Assume that $I$ is a maximal proper strong $\H_Y$-ideal. Clearly by Proposition \ref{Primary property}, $\H_Y(I)$ is a proper $\H_Y$-filter and so there exists an $\H_Y$-ultrafilter $\mathscr{F}$ such that $\H_Y(I)\sub\mathscr{F}$. Thus, $I=\H_Y^{-1}\H_Y(I)\sub\H_Y^{-1}(\mathscr{F})$ and since $\H_Y^{-1}(\mathscr{F})$ is a proper strong $\H_Y$-ideal, it follows that $I=\H_Y^{-1}(\mathscr{F})$.

    (d $\LTA$). Assume that $I=\H_Y^{-1}(\mathscr{F})$ where $\mathscr{F}$ is an $\H_Y$-ultrafilter. Clearly by Proposition \ref{Primary property}, $I$ is a proper strong $\H_Y$-ideal. Now suppose that $J$ is a proper strong $\H_Y$-ideal containing $I$. Thus by Proposition \ref{Primary property}, $\H_Y(J)$ is a proper $\H_Y$-filter containing $\H_Y(I)=\mathscr{F}$, so $\H_Y(I)=\H_Y(J)$, whence by Corollary \ref{coro 2.16}, we have $I=J$. The second part of (d) is straightforward.
    
    (e). Knowing this fact that if $\Ma(R)\sub Y$, then the maximal proper strong $\H_Y$-ideals are exactly the elements of $\Ma(R)$, this part follows easily from the previous part.

    (f). It is clear from the previous part.
\end{proof}

Since $\H_Y $ is a distributive lattice and a filter is a dual of an ideal, clearly, we have the following facts.

\begin{Pro}
    Suppose $ Y  \subseteq \Sp(R)$, $ \mathscr{F} $ is an $\H_Y $-filter on $Y $ and $ {\cal{S}} $ is a $\cup$-closed subset of $\H_Y $. If $\mathscr{F} \cap {\cal{S}} = \emptyset $, then there is a prime $\H_Y $-filter $\mathscr{P}$ containing $ \mathscr{F} $ such that $ \mathscr{P} \cap {\cal{S}} = \emptyset $.
\end{Pro}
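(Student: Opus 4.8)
The plan is to run the classical prime filter separation argument inside the bounded distributive lattice $\H_Y$, whose meet is $\cap$, whose join is $\cup$, and whose order is $\sub$. Thus a filter is a $\cap$-closed, $\sub$-upward-closed subset, primeness of $\mathscr{P}$ means that $A\cup B\in\mathscr{P}$ implies $A\in\mathscr{P}$ or $B\in\mathscr{P}$, and the hypothesis that $\mathcal{S}$ is $\cup$-closed is exactly closure under the lattice join. Everything below is phrased purely in terms of this lattice structure, which is all the argument uses.

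First I would invoke Zorn's Lemma on the family $\Sigma$ of all $\H_Y$-filters $\mathscr{G}$ satisfying $\mathscr{F}\sub\mathscr{G}$ and $\mathscr{G}\cap\mathcal{S}=\emptyset$, ordered by inclusion. This family is nonempty since $\mathscr{F}\in\Sigma$, and the union of any chain in $\Sigma$ is again a member of $\Sigma$: upward-closure is immediate, closure under $\cap$ holds because any two elements of the union already lie in a common link of the chain, and both the containment of $\mathscr{F}$ and the disjointness from $\mathcal{S}$ pass to the union. Zorn then yields a maximal element $\mathscr{P}\in\Sigma$. Since $\mathcal{S}\ne\emptyset$ and $\mathscr{P}\cap\mathcal{S}=\emptyset$, we have $\mathscr{P}\ne\H_Y$, so $\mathscr{P}$ is proper as required by the convention for prime filters; if one insists on also allowing $\mathcal{S}=\emptyset$, one first enlarges $\mathcal{S}$ to $\mathcal{S}\cup\{\emptyset\}$, which is still $\cup$-closed and disjoint from the proper filter $\mathscr{F}$.

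The crux is to show that $\mathscr{P}$ is prime. Suppose $A\cup B\in\mathscr{P}$ while $A\notin\mathscr{P}$ and $B\notin\mathscr{P}$. The filter generated by $\mathscr{P}\cup\{A\}$, namely $\{C\in\H_Y:~C\supseteq D\cap A \text{ for some } D\in\mathscr{P}\}$, strictly contains $\mathscr{P}$ and contains $\mathscr{F}$, so maximality forces it to meet $\mathcal{S}$; hence there are $P_1\in\mathscr{P}$ and $S_1\in\mathcal{S}$ with $P_1\cap A\sub S_1$, and symmetrically $P_2\in\mathscr{P}$ and $S_2\in\mathcal{S}$ with $P_2\cap B\sub S_2$. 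Setting $P=P_1\cap P_2\in\mathscr{P}$ and using distributivity,
\[ P\cap(A\cup B)=(P\cap A)\cup(P\cap B)\sub(P_1\cap A)\cup(P_2\cap B)\sub S_1\cup S_2. \]
Here $P\cap(A\cup B)\in\mathscr{P}$, so upward-closure gives $S_1\cup S_2\in\mathscr{P}$, while $\cup$-closedness of $\mathcal{S}$ gives $S_1\cup S_2\in\mathcal{S}$, contradicting $\mathscr{P}\cap\mathcal{S}=\emptyset$. Therefore $A\in\mathscr{P}$ or $B\in\mathscr{P}$, and $\mathscr{P}$ is the desired prime $\H_Y$-filter containing $\mathscr{F}$ and disjoint from $\mathcal{S}$.

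The Zorn set-up is routine; the one step that needs care, and the main obstacle, is the primeness verification, where I must correctly identify the filter generated by adjoining a single element and then combine the distributivity of $\H_Y$ with the $\cup$-closedness of $\mathcal{S}$ to manufacture the contradicting element $S_1\cup S_2$. I would double-check the description of that generated filter and the direction of every inclusion, since those are the only places where an error could slip in.
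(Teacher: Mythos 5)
Your proof is correct and is precisely the standard Stone-type prime-filter separation argument in a bounded distributive lattice, which is exactly what the paper invokes here (it offers no written proof, remarking only that $\H_Y$ is a distributive lattice and that filters are dual to ideals, so the fact is standard). Your Zorn step, your description of the filter generated by adjoining $A$ to $\mathscr{P}$, and the distributivity computation producing the contradicting element $S_1\cup S_2\in\mathscr{P}\cap\mathcal{S}$ are all sound.
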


\begin{Def}
    An $\H_Y $-filter $\mathscr{P}$ is called a minimal prime $\H_Y $-filter over a $\H_Y $-filter $\mathscr{F}$,  if there are no prime $\H_Y $-filter strictly contained in $\mathscr{P}$ that contains $\mathscr{F}$. By  $\Mi(\mathscr{F})$ we mean the set of all minimal prime $\H_Y $-filters over $\mathscr{F}$.
\end{Def}

The following corollary is an immediate consequence of Lemma \ref{finite subset of Y -1} and Theorem \ref{maxiaml storng and ultra}.

\begin{Cor}
    Let $ Y  \sub \Sp(R) $. Every $\H_Y $-filter $\mathscr{F}$ is the intersection of all minimal prime $\H_Y $-filters over $\mathscr{F}$
    \label{semiprime filter}
\end{Cor}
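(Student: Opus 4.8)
The plan is to transport the already-established statement for strong $\H_Y$-ideals, namely Corollary \ref{corollary}(a), across the order isomorphism furnished by Corollary \ref{coro 2.16}(c). First I would set $I=\H_Y^{-1}(\mathscr{F})$; by Proposition \ref{Primary property}(e) this is a strong $\H_Y$-ideal, and by Proposition \ref{Primary property}(b) we recover $\H_Y(I)=\mathscr{F}$. Thus it suffices to understand the image under $\H_Y$ of a minimal-prime decomposition of $I$. By Corollary \ref{corollary}(a) we have $I=\bigcap_{P\in\Mi(I)}P$, where each $P\in\Mi(I)$ is itself a prime strong $\H_Y$-ideal by Theorem \ref{minimal of I}.

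Next I would apply the mapping $\H_Y$ to this equality. The key observation is that $\H_Y$, being an order isomorphism between the poset of strong $\H_Y$-ideals and the poset of $\H_Y$-filters (both ordered by inclusion), preserves all existing infima; and in each poset the infimum of a family is exactly its set-theoretic intersection. Indeed, an arbitrary intersection of strong $\H_Y$-ideals is again a strong $\H_Y$-ideal by the kernel characterization Proposition \ref{strong}(k), since a finite $F$ lying in the intersection lies in each member and hence $kh_Y(F)$ does too; and an arbitrary intersection of $\H_Y$-filters is again an $\H_Y$-filter, being upward closed, closed under finite meets, and containing the top element $Y$. Consequently $\mathscr{F}=\H_Y(I)=\bigcap_{P\in\Mi(I)}\H_Y(P)$, and by Theorem \ref{maxiaml storng and ultra}(b) each $\H_Y(P)$ is a prime $\H_Y$-filter.

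It remains to identify the family $\{\H_Y(P):P\in\Mi(I)\}$ with $\Mi(\mathscr{F})$. Here I would again invoke that $\H_Y$ is an order isomorphism which, by Theorem \ref{maxiaml storng and ultra}(c), restricts to a bijection between the prime strong $\H_Y$-ideals and the prime $\H_Y$-filters; an order isomorphism necessarily carries the relation \emph{``is a prime element minimal over $I$''} to \emph{``is a prime element minimal over $\H_Y(I)$''}, so it sends $\Mi(I)$ bijectively onto $\Mi(\mathscr{F})$. Combining this with the displayed equality yields $\mathscr{F}=\bigcap_{\mathscr{P}\in\Mi(\mathscr{F})}\mathscr{P}$, as desired (with the empty-intersection convention covering the degenerate case $\mathscr{F}=\H_Y$). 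The only step requiring genuine care—and the one I expect to be the main obstacle—is the verification that $\H_Y$ commutes with the possibly infinite intersections and respects minimality; everything else is a direct substitution, and once the lattice-theoretic fact that an order isomorphism preserves all existing meets is combined with the identification of those meets as ordinary intersections in both lattices, the corollary is immediate.
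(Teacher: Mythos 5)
Your argument is correct and follows the same route the paper intends: the paper offers no written proof, declaring the corollary an immediate consequence of the ideal--filter correspondence, and your proposal simply spells out that transfer of Corollary \ref{corollary}(a) through the order isomorphism of Corollary \ref{coro 2.16}(c). The one step you rightly flag as needing care --- that $\H_Y$ carries $\Mi(I)$ bijectively onto $\Mi(\mathscr{F})$ --- is exactly the content the paper records separately as Proposition \ref{Minimal filter and ideal}, proved there by the same appeal to Theorems \ref{minimal of I} and \ref{maxiaml storng and ultra} that you make.
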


By this fact that for each semiprime ideal $ I $, $ P \in \Mi(I) $ \ff for each $ a \in P $, there is some $ b \notin P $ such that $ ab \in I $, the following proposition and corollary conclude from Theorem \ref{finite subset of Y -1} and the previous corollary.

\begin{Pro}
    Let $\mathscr{F}$ be an $\H_Y $-filter. $\mathscr{P} \in \Mi(\mathscr{F})$ \ff for every $ A \in \mathscr{P}$ there is some $ B \in \H_Y   \setminus \mathscr{P}$ such that $ A \cup B \in \mathscr{F} $.
\end{Pro}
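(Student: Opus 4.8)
The plan is to transport the statement to the level of strong $\H_Y$-ideals through the order isomorphism $\H_Y$ and then apply the quoted characterization of minimal primes over a semiprime ideal. Throughout I read $\mathscr{P}$ as a prime $\H_Y$-filter containing $\mathscr{F}$, as is implicit in the meaning of $\Mi(\mathscr{F})$ (and in the cited ring fact, where $P$ is a prime over $I$); without $\mathscr{F}\sub\mathscr{P}$ the equivalence fails, since a prime $\H_Y$-filter not containing $\mathscr{F}$ may satisfy the right-hand condition yet lie outside $\Mi(\mathscr{F})$.

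First I would put $I=\H_Y^{-1}(\mathscr{F})$ and $P=\H_Y^{-1}(\mathscr{P})$. By Proposition \ref{Primary property}(e) these are strong $\H_Y$-ideals, by Proposition \ref{Primary property}(b) one has $\mathscr{F}=\H_Y(I)$ and $\mathscr{P}=\H_Y(P)$, and by Lemma \ref{semiprime} the ideal $I$ is semiprime. Since, by Corollary \ref{coro 2.16}(c), $\H_Y$ is an order isomorphism from the strong $\H_Y$-ideals onto the $\H_Y$-filters, carrying prime filters to prime ideals and back (Theorem \ref{maxiaml storng and ultra}(a),(c)), it matches the minimal members above a fixed element: thus $\mathscr{P}$ is a minimal prime $\H_Y$-filter over $\mathscr{F}$ \ff $P$ is a minimal prime strong $\H_Y$-ideal over $I$. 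Theorem \ref{minimal of I} then lets me identify the latter family with $\Mi(I)$ itself: every $Q\in\Mi(I)$ is a strong $\H_Y$-ideal, and any minimal prime strong $\H_Y$-ideal over $I$ contains some such $Q$ and hence equals it by minimality. Consequently $\mathscr{P}\in\Mi(\mathscr{F})$ \ff $P\in\Mi(I)$.

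Next I would invoke the quoted fact: as $I$ is semiprime, $P\in\Mi(I)$ \ff for every $a\in P$ there is $b\notin P$ with $ab\in I$. It remains to translate this into the stated condition, using $h_Y(ab)=h_Y(a)\cup h_Y(b)$, the identity $\H_Y(I)=\mathscr{F}$, and the equivalence $h_Y(F)\in\mathscr{P}\LRTA F\sub P$ of Lemma \ref{finite subset of Y -1}(a). For the forward translation, given $A=h_Y(F)\in\mathscr{P}$ with $F=\{a_1,\dots,a_n\}\sub P$, I would choose $b_i\notin P$ with $a_ib_i\in I$ and set $b=b_1\cdots b_n$; primeness of $P$ keeps $b\notin P$, while $a_ib\in I$ for all $i$ gives $A\cup h_Y(b)=h_Y(Fb)\in\H_Y(I)=\mathscr{F}$ with $h_Y(b)\in\H_Y\set\mathscr{P}$. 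For the backward translation, given $a\in P$ I take $A=h_Y(a)\in\mathscr{P}$, obtain $B=h_Y(G)\in\H_Y\set\mathscr{P}$ with $h_Y(aG)=A\cup B\in\mathscr{F}$, read off $aG\sub I$ and $G\nsub P$, and pick any $g\in G\set P$ as the required $b$.

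The genuinely delicate point is exactly this passage between the ``single element'' form of the ring fact and the ``finite set'' form of the filter condition: the former speaks of one $a$ and one $b$, whereas the latter quantifies over arbitrary $A=h_Y(F)$ and permits arbitrary $B=h_Y(G)$. The forward direction is resolved by multiplying the finitely many witnesses $b_i$ and using that $P$ is prime to keep the product outside $P$; the backward direction by extracting a single coordinate of $G$ lying outside $P$. Everything else is bookkeeping through the already-established dictionary between strong $\H_Y$-ideals and $\H_Y$-filters.
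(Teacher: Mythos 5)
Your proof is correct and takes essentially the route the paper indicates: pass through the ideal--filter dictionary (Lemma \ref{finite subset of Y -1} and Proposition \ref{Primary property}) to reduce the claim to the quoted characterization of minimal primes over the semiprime ideal $\H_Y^{-1}(\mathscr{F})$. Your explicit treatment of the finite-set-versus-single-element translation (multiplying the witnesses $b_i$, extracting one $g\in G\set P$) and of the implicit hypothesis that $\mathscr{P}$ is a prime $\H_Y$-filter containing $\mathscr{F}$ supplies details the paper leaves unstated.
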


\begin{Cor}
    $ \mathscr{P} \in \Mi(\{Y\}) $ \ff for every $ A \in \mathscr{P} $ there is a $ B \notin \mathscr{P} $ such that $ A \cup B = Y  $.
    \label{Minimal prime filter}
\end{Cor}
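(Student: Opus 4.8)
The plan is to obtain this as the special case $\mathscr{F} = \{Y\}$ of the Proposition immediately preceding it. First I would check that $\{Y\}$ is genuinely an $\H_Y$-filter: since $h_Y(\Ge{0}) = Y$, the set $Y$ is the top element of the bounded distributive lattice $\H_Y$, and the singleton $\{Y\}$ is trivially closed under finite meets (as $Y \cap Y = Y$) and is upward closed (nothing lies strictly above the top). In fact $\{Y\}$ is the smallest $\H_Y$-filter, so $\Mi(\{Y\})$ is precisely the collection of minimal prime $\H_Y$-filters of the lattice.

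Next I would substitute $\mathscr{F} = \{Y\}$ into the criterion of the preceding Proposition, namely that $\mathscr{P} \in \Mi(\mathscr{F})$ iff for every $A \in \mathscr{P}$ there is some $B \in \H_Y \setminus \mathscr{P}$ with $A \cup B \in \mathscr{F}$. Since the only member of the filter $\{Y\}$ is the top element $Y$, the condition $A \cup B \in \{Y\}$ is equivalent to the equality $A \cup B = Y$; and ``$B \in \H_Y \setminus \mathscr{P}$'' simply means ``$B \notin \mathscr{P}$'' with $B$ ranging over the lattice $\H_Y$. Feeding these two simplifications back into the Proposition yields verbatim the asserted equivalence: $\mathscr{P} \in \Mi(\{Y\})$ iff for every $A \in \mathscr{P}$ there is $B \notin \mathscr{P}$ with $A \cup B = Y$.

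I expect no genuine obstacle here, since the corollary is a direct instantiation. The only point meriting explicit verification is the identification of $\{Y\}$ as a legitimate filter sitting at the bottom of the filter lattice; once that is in hand, the fact that the join in $\H_Y$ is set-theoretic union (recorded at the outset of Section 3 via $h_Y(I) \cup h_Y(J) = h_Y(IJ)$) makes the translation of $A \cup B \in \mathscr{F}$ into $A \cup B = Y$ immediate. Should a self-contained argument be preferred, one could instead retrace the Proposition's own derivation through the order isomorphism $\H_Y$ of Corollary \ref{coro 2.16} together with the standard minimal-prime description of semiprime ideals, but specialization is by far the shortest route.
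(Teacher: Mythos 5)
Your proposal is correct and matches the paper's intent: the corollary is stated as an immediate consequence of the preceding proposition, and your instantiation at $\mathscr{F}=\{Y\}$ (after verifying that $\{Y\}$ is the smallest $\H_Y$-filter, being the upward closure of the top element $h_Y(\Ge{0})=Y$) is exactly the specialization the paper has in mind. No gap.
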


\begin{Pro}
    Let $Y  \sub \Sp(R)$ and $\mathscr{F}$ and $\mathscr{P}$ are two $\H_Y$-filters. Then the following statements hold
    \begin{itemize}
    	\item[(a)] $\mathscr{P} \in \Mi(\mathscr{F})$ \ff  $\H_Y^{-1}(\mathscr{P}) \in \Mi(\H_Y^{-1}(\mathscr{F}))$.
		\item[(b)] If $I$ is a strong $\H_Y$-ideal, then $P\in\Mi(I)$ \ff $\H_Y(P)\in\Mi(\H_Y(I))$.
	\end{itemize}
\label{Minimal filter and ideal}
\end{Pro}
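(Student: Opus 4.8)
The plan is to obtain both parts as transport of structure along the order isomorphisms already established, the single genuinely new point being that, for a strong $\H_Y$-ideal, minimality over it in the \emph{full} prime spectrum coincides with minimality inside the restricted family of prime strong $\H_Y$-ideals over it. First I would fix the transport tool: by Corollary \ref{coro 2.16}(c) the map $\H_Y$ is an order isomorphism from the poset of strong $\H_Y$-ideals onto the poset of $\H_Y$-filters, with inverse $\H_Y^{-1}$, and by Theorem \ref{maxiaml storng and ultra}(a),(c) it restricts to a bijection between prime strong $\H_Y$-ideals and prime $\H_Y$-filters. As an order isomorphism it preserves strict inclusions, hence carries minimal elements of any subposet onto minimal elements of the image subposet.

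Next I would establish the crux: for a strong $\H_Y$-ideal $J$, the set $\Mi(J)$ coincides with the set of minimal elements of the family of prime strong $\H_Y$-ideals containing $J$. For the inclusion $\sub$, every $P\in\Mi(J)$ is a strong $\H_Y$-ideal by Theorem \ref{minimal of I}, and being minimal among all primes over $J$ it is a fortiori minimal among the smaller family of prime strong $\H_Y$-ideals over $J$. For the reverse inclusion, if a prime strong $\H_Y$-ideal $P$ is minimal among such ideals over $J$ yet $P\notin\Mi(J)$, I would choose $Q\in\Mi(J)$ with $Q\subn P$; then $Q$ is again a strong $\H_Y$-ideal by Theorem \ref{minimal of I}, contradicting the minimality of $P$. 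On the filter side, $\Mi(\mathscr{F})$ is by definition the family of minimal prime $\H_Y$-filters over $\mathscr{F}$.

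With these in place, part (a) is immediate: put $J=\H_Y^{-1}(\mathscr{F})$, a strong $\H_Y$-ideal by Proposition \ref{Primary property}(e), and recall $\H_Y\H_Y^{-1}(\mathscr{F})=\mathscr{F}$. The order isomorphism $\H_Y^{-1}$ sends the minimal prime $\H_Y$-filters over $\mathscr{F}$ bijectively onto the minimal prime strong $\H_Y$-ideals over $J$, which by the crux are exactly the members of $\Mi(J)$; this yields $\mathscr{P}\in\Mi(\mathscr{F})$ \ff $\H_Y^{-1}(\mathscr{P})\in\Mi(\H_Y^{-1}(\mathscr{F}))$. For part (b) I would specialize (a) to $\mathscr{F}=\H_Y(I)$, so that $\H_Y^{-1}(\mathscr{F})=I$ by Proposition \ref{Primary property}(d). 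In the forward direction $P\in\Mi(I)$ makes $P$ a strong $\H_Y$-ideal (Theorem \ref{minimal of I}), whence $\H_Y^{-1}\H_Y(P)=P$ and (a) gives $\H_Y(P)\in\Mi(\H_Y(I))$. In the converse, applying (a) to $\mathscr{P}=\H_Y(P)$ yields $\H_Y^{-1}\H_Y(P)\in\Mi(I)$; moreover $\H_Y(I)\sub\H_Y(P)$ forces $I\sub P$ (each finite $S\sub I$ has $h_Y(S)=h_Y(T)$ for a finite $T\sub P$, so $S\sub kh_Y(S)=kh_Y(T)\sub P$), and then the chain $I\sub P\sub\H_Y^{-1}\H_Y(P)\in\Mi(I)$ forces $P=\H_Y^{-1}\H_Y(P)\in\Mi(I)$ by minimality, where $P$ denotes the prime ideal being tested.

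I expect the crux to be the main obstacle: verifying that minimality over $J$ among all primes is the same as minimality among prime strong $\H_Y$-ideals is precisely where Theorem \ref{minimal of I} is indispensable, and it is what renders the otherwise purely formal order-isomorphism argument valid. The only other delicate point is the identity $P=\H_Y^{-1}\H_Y(P)$ in the converse of (b), which relies on the primeness of $P$ together with the minimality just displayed.
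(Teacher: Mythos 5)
Your overall architecture is sound and is in substance the paper's own: both arguments turn on Theorem \ref{minimal of I} (minimal primes over a strong $\H_Y$-ideal are again strong $\H_Y$-ideals) combined with the order isomorphism of Corollary \ref{coro 2.16} and the prime correspondence of Theorem \ref{maxiaml storng and ultra}. Your ``crux'' --- that for a strong $\H_Y$-ideal $J$ the members of $\Mi(J)$ are exactly the minimal elements of the poset of prime strong $\H_Y$-ideals over $J$ --- is a clean factorization of the chase the paper performs inline, and part (a) and the forward half of (b) go through exactly as you describe.

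The one step that fails is the parenthetical in the converse of (b), where you claim that $\H_Y(I)\sub\H_Y(P)$ forces $I\sub P$ via ``$S\sub kh_Y(S)=kh_Y(T)\sub P$''. The final inclusion $kh_Y(T)\sub P$ for a finite $T\sub P$ is precisely condition (k) of Proposition \ref{strong} applied to $P$, i.e.\ it presupposes that $P$ is a strong $\H_Y$-ideal --- which is not available at that point and is, in effect, what the converse is trying to establish. Moreover the implication cannot be repaired: take $R=\Z$, $Y=\{\Ge{2}\}$, $I=\Ge{2}=k(Y)$ (the smallest strong $\H_Y$-ideal, so the hypothesis of (b) holds) and $P=\Ge{0}$. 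Then $\H_Y$ is the two-element lattice $\{Y,\tohi\}$ and $\H_Y(P)=\{Y\}=\H_Y(I)$ is a minimal prime $\H_Y$-filter over $\H_Y(I)$, yet $I\nsub P$ and $P\notin\Mi(I)=\{\Ge{2}\}$. So the converse of (b) is simply false for an arbitrary prime $P$; the statement must be read with $P$ a prime ideal containing $I$, an assumption the paper's own proof makes tacitly when it opens with ``let $Q\in\Mi(I)$ and $I\sub Q\sub P$''. Once $I\sub P$ is granted as a hypothesis, your concluding minimality argument $I\sub P\sub\H_Y^{-1}\H_Y(P)\in\Mi(I)$ is correct and the rest of your proof stands.
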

\begin{proof}
	(a $\RTA$). Let $P_\circ$ be a minimal prime ideal over the strong $\H_Y$-ideal $\H_Y^{-1}(\mathscr{F})$ such that $\H_Y^{-1}(\mathscr{F})\sub P_\circ \sub\H_Y^{-1}(\mathscr{P})$. By Theorem \ref{minimal of I}, $P_\circ$ is a strong $\H_Y$-ideal and hence by Theorem \ref{maxiaml storng and ultra}, $\H_Y(P_\circ)$ is a prime $\H_Y$-filter such that $\mathscr{F}\sub\H_Y(P_\circ)\sub\mathscr{P}$. Therefore, $\H_Y(P_\circ)=\mathscr{P}$ and so $\H_Y^{-1}(\mathscr{P})=P_\circ$, by Corollary \ref{coro 2.16}.
	
	(a $\LTA$). Assume that $\mathscr{P}_\circ$ is a prime $\H_Y$-filter such that $\mathscr{F}\sub\mathscr{P}_\circ \sub\mathscr{P}$. By Theorem \ref{maxiaml storng and ultra}, $\H_Y^{-1}(\mathscr{P}_\circ)$ is a prime strong $\H_Y$-ideal and $\H_Y^{-1}(\mathscr{F})\sub\H_Y^{-1}(\mathscr{P}_\circ)\sub\H_Y^{-1}(\mathscr{P})$. Therefore, $\H_Y^{-1}(\mathscr{P}_\circ)=\H_Y^{-1}(\mathscr{P})$ and so $\mathscr{P}_\circ=\mathscr{P}$, by Corollary \ref{coro 2.16}.
	
	(b $\RTA$). Assume that $I$ is a strong $\H_Y$-ideal and $P\in \Mi(I)$, then by Theorem \ref{minimal of I}, $P$ is  a strong $\H_Y$-ideal, so  $ \H_Y^{-1}\H_Y(P)\in \Mi(\H_Y^{-1}\H_Y(I)) $. Hence by part (a) and Theorem \ref{maxiaml storng and ultra}, $ \H_Y(P)\in \Mi(\H_Y(I)) $.
	
	(b $\LTA$). Let $Q\in\Mi(I)$ and $I\sub Q\sub P$. Hence, $ \H_Y(Q) $ is a prime $ \H_Y $-filter, by Theorem \ref{maxiaml storng and ultra}, and $ \H_Y(I) \subseteq \H_Y(Q) \subseteq \H_Y(P) $, so $ \H_Y(Q) = \H_Y(P) $. By Theorem \ref{minimal of I}, $ Q $ is a strong $ \H_Y $ -ideal, thus $P\sub\H_Y^{-1}\H_Y(P)=\H_Y^{-1}\H_Y(Q)=Q$ and so $Q=P$.
\end{proof}

\begin{Pro}
    Let $ Y  \sub \Sp(R) $, $ k(Y)  = I$ and $ \mathscr{F} $ be an $\H_Y $-filter on $Y $. Set $ T= \{ P / I : P \in Y  \} $ and for every $ A \in \mathscr{F} $ define $ A' = \{ P / I : P \in A \} $ and $ \mathscr{F}' = \{ A' : A \in \mathscr{F} \} $. The following statements hold.
    \begin{itemize}
        \item[(a)] $ \mathscr{F}' $ is an $\H_T$-filter on $T$.
        \item[(b)] $ \mathscr{F} $ is a prime $\H_Y $-filter on $Y $ \ff $ \mathscr{F}' $ is a prime $\H_T$-filter on $T$.
        \item[(c)] $ \mathscr{F} $ is an $\H_Y $-ultrafilter on $Y $ \ff $ \mathscr{F}' $ is an $\H_T$-ultrafilter on $T$.
        \item[(d)] $ \frac{\H_Y^{-1}(\mathscr{F})}{I} = \H_T^{-1}(\mathscr{F}')$.
    \end{itemize}
\end{Pro}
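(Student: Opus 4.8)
The whole proposition rests on a single structural observation. Since $I=k(Y)=\bigcap_{P\in Y}P$, every $P\in Y$ contains $I$, so by the correspondence theorem the map $\phi\colon Y\TOO T$, $P\mapsto P/I$, is a well-defined bijection. First I would record the compatibility of $\phi$ with the basic closed sets: for a finite $F\sub R$ and its image $\bar F\sub R/I$ one has $\bar a\in P/I\LRTA a\in P$ for every $P\in Y$, whence $h_T(\bar F)=\{P/I:F\sub P\}=(h_Y(F))'$. Because $R\TOO R/I$ is onto, every finite subset of $R/I$ is of the form $\bar F$, so this identity shows that the induced power-set map $\Phi\colon A\mapsto A'$ carries $\H_Y$ onto $\H_T$. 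As $\phi$ is a bijection, $\Phi$ preserves inclusions, finite unions and intersections, and the top and bottom elements $Y$ and $\tohi$; thus $\Phi$ is an isomorphism of bounded distributive lattices from $\H_Y$ onto $\H_T$, and by construction $\Phi(\mathscr{F})=\mathscr{F}'$.

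Granting this, parts (a)--(c) become immediate, since being a filter, a prime filter, or an ultrafilter are purely lattice-theoretic conditions that are preserved and reflected by a lattice isomorphism. Concretely, for (a) I would check that $\mathscr{F}'$ is closed under meets and is upward closed by transporting the corresponding facts for $\mathscr{F}$ through $\Phi$, using $A'\cap B'=(A\cap B)'$ and $A'\sub B'\LRTA A\sub B$. For (b) I would transport the prime condition (from $A\cup B\in\mathscr{F}$ infer $A\in\mathscr{F}$ or $B\in\mathscr{F}$) through $\Phi$. For (c) I would observe that $\Phi$ sends proper filters to proper filters, because $\Phi(\tohi)=\tohi$, and that, as an order isomorphism between the filter lattices, it sends maximal proper filters to maximal proper filters.

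For (d) I would first note that the left-hand quotient even makes sense: if $a\in I$ then $h_Y(a)=Y\in\mathscr{F}$, so $I\sub\H_Y^{-1}(\mathscr{F})$. Then I would simply unwind the two definitions. By definition $\frac{\H_Y^{-1}(\mathscr{F})}{I}=\{\bar a:h_Y(a)\in\mathscr{F}\}$ and $\H_T^{-1}(\mathscr{F}')=\{\bar a:h_T(\bar a)\in\mathscr{F}'\}$; since $h_T(\bar a)=\Phi(h_Y(a))$ and $\Phi$ is a bijection with $\mathscr{F}'=\Phi(\mathscr{F})$, the membership $h_T(\bar a)\in\mathscr{F}'$ holds \ff $h_Y(a)\in\mathscr{F}$, which yields the claimed equality. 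The argument is internally consistent because $\bar a=\bar{a'}$ forces $a-a'\in I\sub P$ for all $P\in Y$, hence $h_Y(a)=h_Y(a')$.

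The only genuine content is the first paragraph, namely setting up $\phi$ and verifying $h_T(\bar F)=(h_Y(F))'$ so that $\Phi$ becomes a lattice isomorphism $\H_Y\TOO\H_T$; once this is in place, every remaining assertion is a formal transport of a lattice-level or definition-level statement, and I expect no real obstacle beyond that initial identification.
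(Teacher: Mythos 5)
Your proposal is correct and is exactly the argument the paper has in mind (the paper's own proof is just ``It is straightforward''): since $I=k(Y)\sub P$ for every $P\in Y$, the correspondence $P\mapsto P/I$ is a bijection $Y\to T$ inducing a bounded-lattice isomorphism $\H_Y\to\H_T$ via $h_Y(F)\mapsto h_T(\bar F)$, and all four parts are formal consequences. No gaps.
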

\begin{proof}
    It is straightforward.
\end{proof}

\begin{Pro}
    Suppose $R'$ is a subring of a ring $R$ and $Y \sub \Sp(R)$, then $Y' = \{ P \cap R' : P \in Y \} \sub \Sp(R') $. Set \[ \mathscr{F}' = \{ h_{Y'}(F) : h_Y(F) \in \mathscr{F} \text{ and } F \text{ is a finite subset of } R' \} \]
    for every $\H_Y$-filter $\mathscr{F}$. The following statements hold.
    \begin{itemize}
        \item[(a)] $h_{Y'}(S) = \{ P \cap R' : P \in h_Y(S) \} $, for every $S \sub R'$.
        \item[(b)] If $ \mathscr{F} $ is an $\H_Y$-filter, then $\mathscr{F}'$ is an $\H_{Y'}$-filter .
        \item[(c)] For every $\H_{Y'}$-filter $\mathscr{G}$, there is some $\H_Y$-filter $\mathscr{F}$ such that $\mathscr{G} = \mathscr{F}'$.
        \item[(d)] $\H^{-1}_Y(\mathscr{F}) \cap R' = \H^{-1}_{Y'}(\mathscr{F}')$, for every $\H_Y$-filter $\mathscr{F}$.
        \item[(e)] If $I$ is a (strong) $\H_Y$-ideal, then $I\cap R'$ is a (strong) $\H_{Y'}$-ideal.
        \item[(f)] $M'$ is a maximal (strong) $\H_{Y'}$-ideal \ff there is some maximal (strong) $\H_Y$-ideal such that $M' = M \cap R'$.
    \end{itemize}
\end{Pro}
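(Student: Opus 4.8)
The plan is to make part (a) the geometric engine of the whole statement, by studying the restriction map $\pi:Y\TOO Y'$, $P\mapsto P\cap R'$. First I would note $\pi$ is a well-defined surjection (contractions of primes are prime, and $Y'$ is its image by definition), and that for $S\sub R'$ one has $P\in h_Y(S)\LRTA S\sub P\LRTA S\sub P\cap R'\LRTA \pi(P)\in h_{Y'}(S)$, the middle equivalence using $S\sub R'$. This gives at once $h_{Y'}(S)=\pi\big(h_Y(S)\big)$ and $h_Y(S)=\pi^{-1}\big(h_{Y'}(S)\big)$, which is (a). From (a) I would extract the two facts driving everything else: for $F,G\sub R'$,
\[ h_{Y'}(F)\sub h_{Y'}(G)\ \LRTA\ h_Y(F)\sub h_Y(G),\qquad h_{Y'}(F)=\tohi\ \LRTA\ h_Y(F)=\tohi. \]
The first holds by applying the inclusion-preserving maps $\pi$ and $\pi^{-1}$ (since $\pi h_Y=h_{Y'}$ and $\pi^{-1}h_{Y'}=h_Y$ on subsets of $R'$), the second because $\pi(A)=\tohi\LRTA A=\tohi$. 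I will call these the \emph{reflection} properties.

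For (b): the members of $\mathscr{F}'$ lie in $\H_{Y'}$ by construction; closure under meets follows from $h_{Y'}(F_1)\cap h_{Y'}(F_2)=h_{Y'}(F_1\cup F_2)$ together with $h_Y(F_1\cup F_2)=h_Y(F_1)\cap h_Y(F_2)\in\mathscr{F}$; and upward closure is exactly inclusion-reflection fed into the upward closure of $\mathscr{F}$. For (c), given an $\H_{Y'}$-filter $\mathscr{G}$ I would take $\mathscr{F}$ to be the $\H_Y$-filter generated by $\{h_Y(F):F\in{\bf{F}}(R'),\ h_{Y'}(F)\in\mathscr{G}\}$ (a filter base by the same union trick); then $\mathscr{G}\sub\mathscr{F}'$ is immediate, and $\mathscr{F}'\sub\mathscr{G}$ because any $h_Y(G)\in\mathscr{F}$ dominates some $h_Y(F)$ with $h_{Y'}(F)\in\mathscr{G}$, whence $h_{Y'}(F)\sub h_{Y'}(G)$ by reflection and so $h_{Y'}(G)\in\mathscr{G}$. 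Part (d) is then one line: for $a\in R'$, $h_Y(a)\in\mathscr{F}$ gives $h_{Y'}(a)\in\mathscr{F}'$ by definition, and conversely $h_{Y'}(a)\in\mathscr{F}'$ means $h_{Y'}(a)=h_{Y'}(F)$ with $h_Y(F)\in\mathscr{F}$, so by reflection $h_Y(a)=h_Y(F)\in\mathscr{F}$.

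Part (e) I would prove directly from the internal characterizations: by Proposition \ref{hyideal}(d) (resp. Proposition \ref{strong}(g)), take $a\in I\cap R'$ (resp. a finite $F\sub I\cap R'$) and $b\in R'$ with $h_{Y'}(a)\sub h_{Y'}(b)$ (resp. $h_{Y'}(F)\sub h_{Y'}(b)$); inclusion-reflection upgrades this to the corresponding inclusion over $Y$, and since $I$ is a (strong) $\H_Y$-ideal we get $b\in I$, hence $b\in I\cap R'$. Alternatively (e) falls out of (b)--(d) via Proposition \ref{Primary property}(d),(e), since $I\cap R'=\H_Y^{-1}\H_Y(I)\cap R'=\H_{Y'}^{-1}\big(\H_Y(I)'\big)$ is of the form $\H_{Y'}^{-1}(\mathscr{G})$.

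Finally, for (f) I would route through the ultrafilter correspondence of Theorem \ref{maxiaml storng and ultra}(d). The forward implication is the comfortable one: if $M'=\H_{Y'}^{-1}(\mathscr{G})$ for an $\H_{Y'}$-ultrafilter $\mathscr{G}$, I lift $\mathscr{G}$ by (c) and run Zorn on the $\H_Y$-filters with image $\mathscr{G}$ (a class closed under unions of chains, since $(\cdot)'$ commutes with unions) to get a maximal such $\mathscr{H}$; emptiness-reflection forces any proper enlargement of $\mathscr{H}$ to keep a proper image, so $\mathscr{H}$ is an $\H_Y$-ultrafilter with $\mathscr{H}'=\mathscr{G}$, and $M=\H_Y^{-1}(\mathscr{H})$ works by (d). The converse is where I expect the real obstacle: it asks that the image $\mathscr{H}'$ of an $\H_Y$-ultrafilter be an $\H_{Y'}$-ultrafilter, equivalently that the contraction of a maximal strong $\H_Y$-ideal stay maximal. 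The difficulty is that maximality of $\mathscr{H}$ is witnessed by sets $h_Y(F)$ with $F\sub R$ \emph{arbitrary}, while maximality of $\mathscr{H}'$ needs witnesses with $F\sub R'$, and the reflection properties give no obvious way to descend a witness into $R'$. I would attempt to close it by proving that the filter generated by $\mathscr{H}$ together with $\{h_Y(F):F\in{\bf{F}}(R'),\,h_{Y'}(F)\in\mathscr{G}\}$ remains proper, so that maximality of $\mathscr{H}$ collapses it and forces $\mathscr{G}=\mathscr{H}'$; whether that enlarged filter is proper is exactly the delicate point, and it is the step I would scrutinize most carefully, as it is not clear to me that it holds without an extra hypothesis (e.g. integrality of $R$ over $R'$, or $Y\sub\Mi(R)$).
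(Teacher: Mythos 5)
Your reflection machinery --- $h_{Y'}(S)=\pi\big(h_Y(S)\big)$ and $h_Y(S)=\pi^{-1}\big(h_{Y'}(S)\big)$ for $S\sub R'$, whence inclusion-reflection and emptiness-reflection --- is the right engine, and it carries parts (a)--(e) and the ($\RTA$) half of (f) correctly; the paper offers nothing beyond ``the proof is straightforward'', and your handling of (c) (lifting via the base $\{h_Y(F):F\in{\bf{F}}(R'),\ h_{Y'}(F)\in\mathscr{G}\}$) and of (f)($\RTA$) (Zorn over the $\H_Y$-filters with image $\mathscr{G}$, with emptiness-reflection showing the maximal one is an $\H_Y$-ultrafilter) is more careful than anything the authors recorded. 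One caveat: your route to (f) passes entirely through the ultrafilter correspondence of Theorem \ref{maxiaml storng and ultra}(d), which only covers the strong case; the plain $\H_Y$-ideal reading of (f)($\RTA$) would need a separate Zorn argument on ideals.

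The step you flagged in (f)($\LTA$) --- that the contraction of a maximal proper (strong) $\H_Y$-ideal stays maximal --- is not merely delicate: it is false as stated, so it cannot be closed. Take $R=k[x,y]$, $R'=k[x]$, $P_1=\Ge{y}$, $P_2=\Ge{x,y-1}$ and $Y=\{P_1,P_2\}$, so that $P_1\cap R'=\Ge{0}$, $P_2\cap R'=\Ge{x}$ and $Y'=\{\Ge{0},\Ge{x}\}$. Here $P_1$ is a maximal proper strong $\H_Y$-ideal: if a strong $\H_Y$-ideal $J$ contains $P_1$ and some $a\notin P_1$, then $h_Y(\{y,a\})=\{P_1\}\cap h_Y(a)=\tohi$, so $R=kh_Y(\{y,a\})\sub J$. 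Yet $P_1\cap R'=\Ge{0}$ is contained in the strictly larger proper strong $\H_{Y'}$-ideal $\Ge{x}=kh_{Y'}(x)$, so it is not maximal. The obstruction is exactly the one you isolated: a witness $h_Y(F)$ to the maximality of $\mathscr{H}$ (here $F=\{y,a\}$) need not descend to a witness with $F\sub R'$, because incomparable members of $Y$ may contract to comparable members of $Y'$. Your instinct that an extra hypothesis is required (e.g.\ something forcing $Y'$ to be an antichain, such as $R$ integral over $R'$ together with $Y\sub\Mi(R)$) is correct; the right move is to record the counterexample rather than to keep trying to prove (f)($\LTA$) in this generality.
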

\begin{proof}
    The proof is straightforward.
\end{proof}

\section{Some important classes of $\H_Y$-ideals, strong $\H_Y$-ideals and $Y$-Hilbert ideals}

In this section, we give propositions which generate a numerous class of $\H_Y$-ideals, strong $\H_Y$-ideals and $Y$-Hilbert ideals. Recall that, associated with each ideal $I$, there exists the ideal $m(I)=\{a\in R: a=ai \text{ for some } i\in I\} = \bigcup_{i \in I} \An(1-i) $ and associated with each prime ideal $ P $, there is the ideal $ O_P = \{‌ a \in R :‌ ab = 0 \text{ for some } b \notin P‌ \} = \bigcup_{a \notin P} \An(a) $. $ m(I) $ and $ O_P $ are called the quasi-regular part of $I$ and  the $P$ component of the zero, respectively. Also an ideal $I$ of $R$ is called pure if $I=m(I)$. It is easy to check that when a union of (strong) $\H_Y$-ideals is an ideal, then the union is also a (strong) $\H_Y$-ideal. We refer to \cite{mason1973z,aliabady2017regular,aliabad2017spectrum} for more detailed information about these classes of ideals. The following facts show that if the zero ideal is a (strong) $\H_Y$-ideal, then $Ann(I)$, $m(I)$ and $O_P$ are (strong) $\H_Y$-ideals, where $I$ and $P$ are an arbitrary ideal and a prime ideal of $R$, respectively. 

\begin{Pro}
    Suppose that $Y  \sub \Sp(R)$. If $J$ is a strong $\H_Y $-ideal, then $(J:I)$ is a strong $\H_Y $-ideal, for every ideal $I$ of $R$. The same assertions hold for $\H_Y $-ideals and $Y$-Hilbert ideals.
    \label{(J:I)}
\end{Pro}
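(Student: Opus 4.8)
The plan is to work with the kernel--hull characterizations from Propositions \ref{hyideal} and \ref{strong}, together with the basic identity $h_Y(ab)=h_Y(a)\cup h_Y(b)$ and the equivalence $b\in kh_Y(A)\LRTA h_Y(A)\sub h_Y(b)$. The starting observation is that $(J:I)=\bigcap_{a\in I}(J:a)$, so it suffices to force $ca\in J$ for each $a\in I$ separately. The crucial device throughout is that multiplying a finite set by a fixed element $a$ translates membership in $(J:I)$ into membership in $J$, while preserving the relevant hull inclusions.

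For the strong $\H_Y$-ideal assertion I would verify condition (k) of Proposition \ref{strong}: given a finite subset $F=\{x_1,\dots,x_n\}\sub(J:I)$, I must show $kh_Y(F)\sub(J:I)$. Fix $c\in kh_Y(F)$ and $a\in I$; the goal is $ca\in J$. Put $aF=\{ax_1,\dots,ax_n\}$, which is a finite subset of $J$ because each $x_i\in(J:I)$. Using $h_Y(ax_i)=h_Y(a)\cup h_Y(x_i)$ and distributivity of union over intersection gives
\[ h_Y(aF)=\bigcap_{i}\big(h_Y(a)\cup h_Y(x_i)\big)=h_Y(a)\cup h_Y(F). \]
Since $c\in kh_Y(F)$ means $h_Y(F)\sub h_Y(c)$, I obtain $h_Y(aF)=h_Y(a)\cup h_Y(F)\sub h_Y(a)\cup h_Y(c)=h_Y(ca)$, i.e. $ca\in kh_Y(aF)$. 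As $aF$ is a finite subset of the strong $\H_Y$-ideal $J$, condition (k) yields $kh_Y(aF)\sub J$, hence $ca\in J$. Letting $a$ range over $I$ gives $c\in(J:I)$, as required.

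The $\H_Y$-ideal case is the same argument with singletons via condition (e) of Proposition \ref{hyideal}: from $x\in(J:I)$ and $c\in kh_Y(x)$ one has $xa\in J$, checks $h_Y(xa)=h_Y(x)\cup h_Y(a)\sub h_Y(c)\cup h_Y(a)=h_Y(ca)$, and concludes $ca\in kh_Y(xa)\sub J$. For the $Y$-Hilbert case I would instead argue directly with primes, since $(J:I)$ need not be finitely generated. As $J=kh_Y(J)$ and $(J:I)\sub kh_Y((J:I))$ holds for any ideal, it remains to prove $kh_Y((J:I))\sub(J:I)$: take $c\in kh_Y((J:I))$ and $a\in I$, and fix any $P\in Y$ with $J\sub P$. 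If $a\in P$ then $ca\in P$; otherwise, for every $x\in(J:I)$ we have $xa\in J\sub P$ with $a\notin P$, so primeness forces $x\in P$, whence $(J:I)\sub P$ and therefore $c\in P$, again giving $ca\in P$. Thus $ca\in kh_Y(J)=J$, so $c\in(J:I)$.

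The only genuinely non-routine point is the identity $h_Y(aF)=h_Y(a)\cup h_Y(F)$ and the realization that multiplication by $a$ converts the hypothesis on $J$ into the conclusion for $(J:I)$; once this is in place the three cases run in parallel, the Hilbert case being handled by a direct primeness argument rather than by a finite-subset reduction.
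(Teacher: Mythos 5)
Your proof is correct and follows essentially the same route as the paper: the key step in both is to multiply the finite set $F\sub(J:I)$ by an element $a\in I$ so that $aF\sub J$, use $h_Y(aF)=h_Y(a)\cup h_Y(F)$ to transfer the hull inclusion, and then invoke the strong $\H_Y$-property of $J$ to get $ca\in J$. The only divergence is the $Y$-Hilbert case, which the paper leaves to the reader; your direct primeness argument there is valid (and the same multiplicative argument would also go through verbatim with arbitrary subsets of $(J:I)$ in place of finite ones).
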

\begin{proof}
	Suppose that $F$ is a finite subset of $(J:I)$ and $h_Y(F)\sub h_Y(a)$. For each $i\in I$
	\[ h_Y(Fi)=h_Y(F)\cup h_Y(i)=h_Y(a)\cup h_Y(i)=h_Y(ai)\]
	since $Fi$ is a finite subset of $J$ and $J$ is a strong $\H_Y$-ideal, it follows that $ai\in J$, thus $a\in(J:I)$. Using Proposition \ref{strong}, concludes that $(I:J)$ is a strong $\H_Y$-ideal.
\end{proof}

\begin{Def}
    Let $ Y  \sub \Sp(R)$ and $ k(Y)=\langle 0\rangle  $. By a minimal (strong) $\H_Y$-ideal we mean a non-zero (strong) $\H_Y$-ideal which contains no (strong) $\H_Y$-ideal except $ \langle0\rangle $.
\end{Def}

Recall that a ring $R$ is called Gelfand, if every prime ideal is contained in a unique maximal ideal. Also, a ring $R$ is called weakly regular, if every non-zero ideal contains a non-zero idempotent element.

\begin{Pro}
	Let $ Y  \sub \Sp(R)$ and $ k(Y)=\Ge{0}$. If $R$ is either a semiprimitive Gelfand ring or a weakly regular ring then the minimal $\H_Y$-ideals, the minimal strong $\H_Y$-ideals and the minimal $Y$-Hilbert ideals coincide.
\end{Pro}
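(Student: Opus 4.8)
The plan is to reduce the whole statement to a single fact about the principal kernels $kh_Y(a)$, after which a short sandwich argument closes it; the ring-theoretic hypotheses will only enter at the end, to describe the common minimal ideals concretely. First I would record the key observation. Since $\Rad(R)\sub k(Y)=\Ge{0}$, the ring $R$ is reduced, and for every $a\in R$ the ideal $kh_Y(a)=k\big(h_Y(a)\big)$ is at once a $Y$-Hilbert ideal, a strong $\H_Y$-ideal and an $\H_Y$-ideal. Indeed it is the kernel of the subfamily $h_Y(a)\sub Y$, so $kh_Y\big(kh_Y(a)\big)=kh_Y(a)$ and it is $Y$-Hilbert; being of the form $kh_Y(F)$ for the finite set $F=\{a\}$, it is in particular a strong $\H_Y$-ideal. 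Moreover, by Proposition~\ref{hyideal}(e) and Proposition~\ref{strong}(k), it is the smallest ideal of each of these three kinds that contains $a$.

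Next I would show that a minimal ideal of any one of the three kinds is automatically such a principal kernel. If $I$ is minimal of one of the kinds and $0\ne a\in I$, then the relevant smallness condition (Proposition~\ref{hyideal}(e), Proposition~\ref{strong}(k), or the definition of a $Y$-Hilbert ideal, as appropriate) yields $kh_Y(a)\sub I$, while $kh_Y(a)$ is itself a nonzero ideal of that same kind. Minimality then forces $I=kh_Y(a)$. Hence every minimal $\H_Y$-ideal, every minimal strong $\H_Y$-ideal and every minimal $Y$-Hilbert ideal is a principal kernel $kh_Y(a)$, and each such ideal lies simultaneously in all three classes.

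The coincidence now follows by a sandwich argument along the inclusions of the three classes (the $Y$-Hilbert ideals sit inside the strong $\H_Y$-ideals, which sit inside the $\H_Y$-ideals). A minimal $\H_Y$-ideal $I=kh_Y(a)$ contains no nonzero proper $\H_Y$-ideal, hence no nonzero proper ideal of either smaller kind, and it itself belongs to all three classes; so it is also minimal as a strong $\H_Y$-ideal and as a $Y$-Hilbert ideal. Conversely, if $I=kh_Y(a)$ is minimal of one of the two smaller kinds and $\Ge{0}\ne J\subn I$ were an $\H_Y$-ideal, then choosing $0\ne b\in J$ gives $kh_Y(b)\sub J\subn I$, a nonzero proper ideal of that smaller kind (again a principal kernel is both strong and $Y$-Hilbert), contradicting minimality. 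Thus the three families of minimal ideals coincide.

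The argument so far uses only $k(Y)=\Ge{0}$; the hypotheses that $R$ be weakly regular, or semiprimitive and Gelfand, are what pin these common ideals down as the minimal ideals of $R$, each generated by an idempotent, and this is where the hypotheses do real work. In the weakly regular case it is immediate: a minimal $\H_Y$-ideal $I$ contains a nonzero idempotent $e$, and since $eR$ is then a nonzero $\H_Y$-ideal contained in $I$, minimality gives $I=eR$. The semiprimitive Gelfand case is the delicate one, and I expect it to be the main obstacle. Writing a minimal $\H_Y$-ideal as $kh_Y(a)$, minimality means $h_Y(b)=h_Y(a)$ for every nonzero $b\in kh_Y(a)$, so $h_Y^c(a)$ is a minimal nonempty open subset of $Y$; since $Y$ is $T_0$ this set is a single point $\{P_0\}$, and Lemma~\ref{introduction} then gives $\An(a)=kh_Y^c(a)=P_0$, which, being a prime annihilator in a reduced ring, is in fact a minimal prime of $R$. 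The hard part is to invoke the Gelfand property together with $\Jac(R)=\Ge{0}$ to control the primes of $Y$ lying over $P_0$ and thereby split off an idempotent $e$ with $kh_Y(a)=eR$; producing this idempotent is the crux of that case.
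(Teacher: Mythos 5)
Your argument is correct and it proves the stated coincidence, but by a genuinely different route from the paper's. The paper spends the ring hypotheses immediately: for a semiprimitive Gelfand ring it quotes \cite{aliabad2017spectrum} to obtain a nonzero $\An(1-i)\sub m(I)\sub I$, and for a weakly regular ring it takes $\Ge{e}=\An(1-e)\sub I$ with $e$ a nonzero idempotent of $I$; in both cases Lemma \ref{introduction} exhibits $\An(x)=kh_Y^c(x)$ as a nonzero $Y$-Hilbert ideal inside $I$, minimality forces $I=\An(x)$, and the same sandwich you describe (left implicit in the paper's ``we are done'') finishes the proof. You instead plant inside $I$ the principal kernel $kh_Y(a)$ for any $0\ne a\in I$, which is automatically a nonzero $Y$-Hilbert ideal (hence strong $\H_Y$, hence $\H_Y$) contained in $I$ by Proposition \ref{hyideal}(e), Proposition \ref{strong}(k), or $I=kh_Y(I)$, according to the kind of $I$. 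That step needs nothing beyond $k(Y)=\Ge{0}$, so your first three paragraphs in fact establish the coincidence of the three families of minimal ideals for an arbitrary ring, showing the Gelfand/weakly-regular hypotheses to be superfluous for the statement as written; what those hypotheses buy in the paper's proof is the sharper description of the common minimal ideals as annihilator ideals $\An(x)$ (generated by idempotents in the weakly regular case). Your closing paragraph, which tries to recover that description, is the only incomplete part, and it contains a small slip --- minimality only gives $kh_Y(b)=kh_Y(a)$, i.e.\ $\ov{h_Y(b)}=\ov{h_Y(a)}$, not $h_Y(b)=h_Y(a)$ --- but none of that is needed for the proposition, so the proof of the statement itself is complete.
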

\begin{proof}
     Let $I$ be a minimal $\H_Y$-ideal, minimal strong $\H_Y$-ideal or minimal $\H_Y$-ideal. If $R$ is a semiprimitive Gelfand ring $R$, then since $I$ is a non-zero ideal,by \cite[Thereom 3.2]{aliabad2017spectrum} $m(I)$ is non-zero ideal. By \cite[Propostion 2.1]{aliabad2017spectrum} $m(I)=\bigcup_{i\in I}\An(1-i)$. Therefore, $\Ge{0}\ne\An(1-i)\sub m(I)\sub I$, for some $i\in I$. Also if $R$ is a weakly regular ring, then the non-zero ideal $I$, contains an ideal of the form $\Ge{e}=\An(1-e)$, where $e$ is the non-zero idempotent element of $I$. So in the both of these rings we have $\An(x)\sub I$, for some $x\in R$. By Lemma \ref{introduction}, $\An(x)=kh_Y^c(x)$, hence $\An(I)$ is a $Y$-Hilbert ideal and so is (strong) $\H_Y$-ideal. Consequently, by the minimality of $I$, $I=\An(x)$ and we are done.
\end{proof}

The following proposition shows that a considerable class of ideals are strong $\H_Y$-ideal. Recall that for every multiplicatively closed subset $A$ and any ideal $I$ of a ring $R$ with $A\cap I=\tohi$, we can define  the  ideal  $I_A=\{r\in R: ra\in I$ for some $a\in A\}=\bigcup_{a\in A}(I:a)=\sum_{a\in A}(I:a).$

\begin{Pro}
    Suppose that $Y\sub\Sp(R)$. If $A$ is multiplicatively closed set and $I$ is a (strong) $\H_Y$-ideal of $R$ with $A\cap I=\tohi$, then $I_A$ is a (strong) $\H_Y $-ideal.
\label{opmi}
\end{Pro}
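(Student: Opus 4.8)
The plan is to read off from the very definition that $I_A=\bigcup_{a\in A}(I:a)=\sum_{a\in A}(I:a)$ and to reduce the statement to two facts that are already available: Proposition \ref{(J:I)}, which produces colon ideals that inherit the (strong) $\H_Y$ property, and the remark recorded just before Proposition \ref{(J:I)} that a union of (strong) $\H_Y$-ideals is again a (strong) $\H_Y$-ideal as soon as that union happens to be an ideal. So the whole argument is an assembly of these two ingredients, carried out uniformly for the $\H_Y$-ideal and the strong $\H_Y$-ideal cases at once.

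First I would observe that $(I:a)=(I:\Ge{a})$, so Proposition \ref{(J:I)}, applied with $J=I$ and the ideal $\Ge{a}$, shows that each $(I:a)$ is a (strong) $\H_Y$-ideal; since that proposition states the same conclusion for $\H_Y$-ideals as for strong $\H_Y$-ideals, both versions of the present claim are covered simultaneously. Next I would check that the union $\bigcup_{a\in A}(I:a)$ is genuinely an ideal, which is where I would use that $A$ is multiplicatively closed: for $a,b\in A$ one has $(I:a)\cup(I:b)\sub(I:ab)$ with $ab\in A$, so the family $\{(I:a):a\in A\}$ is upward directed and its union is therefore an ideal. (The hypothesis $A\cap I=\tohi$ is what keeps $1\notin I_A$, i.e. makes $I_A$ proper, but it is not needed for the $\H_Y$-argument itself.) Combining these two observations, the union-closure remark immediately yields that $I_A$ is a (strong) $\H_Y$-ideal.

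The hard part here is genuinely modest: it is only the verification of upward directedness, i.e. that multiplicative closure of $A$ forces $(I:a),(I:b)\sub(I:ab)$, so that the union of the individually well-behaved colon ideals does not break ideal-hood. Everything else is a direct citation of Proposition \ref{(J:I)} and the preceding union remark. If one preferred to avoid the union remark, the same conclusion could instead be obtained directly from the characterization in Proposition \ref{strong}: given a finite $F\sub I_A$ and $h_Y(F)\sub h_Y(c)$, each element of $F$ multiplies into $I$ by some member of $A$, and taking the product of those (finitely many) elements of $A$ reduces the situation to a single colon ideal $(I:a)$, which is a strong $\H_Y$-ideal by Proposition \ref{(J:I)}; but the directed-union route is cleaner and I would present that one.
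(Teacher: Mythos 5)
Your proposal is correct and follows essentially the same route as the paper: apply Proposition \ref{(J:I)} to each $(I:a)$, observe that multiplicative closure of $A$ makes $\{(I:a)\}_{a\in A}$ upward directed, and conclude via the remark that a (directed) union of (strong) $\H_Y$-ideals which is an ideal is again a (strong) $\H_Y$-ideal. Your explicit verification of directedness via $(I:a)\cup(I:b)\sub(I:ab)$ merely spells out what the paper labels as ``clearly.''
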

\begin{proof}
    Since $A\cap I=\emptyset$, $1\notin\bigcup_{a\in A}(I:a)=I_A$ is a proper ideal. By Proposition \ref{(J:I)}, $(I:a)$ is a (strong) $\H_Y$-ideal, for every $a\in A$. Clearly, $\{(I:a)\}_{a\in A}$ is a directed family of (strong) $\H_Y$-ideals and since the union of a directed family of (strong) $\H_Y$-ideals is also a (strong) $\H_Y$-ideal, it follows that $I_A=\bigcup_{a\in A}(I:a)$ is a (strong) $\H_Y$-ideal.
\end{proof}

\begin{Rem}
	\label{OPMI}
	Suppose that  $Y\sub \Sp(R)$, $k(Y)=\langle 0\rangle $ and $A$ is a multiplicatively closed subset  of  $R$. Then we can define the ideal ${\langle 0\rangle}_A=0_A=\{r\in R: ra=0$ for some $a\in A\}=\bigcup_{a\in A}\An(a)=\sum_{a\in A}\An(a)$. Since in this case $\langle 0\rangle$ is a (strong) $\H_Y$-ideal, $ 0_A $ is always a (strong) $\H_Y$-ideal, by Proposition \ref{opmi}. Some of the most important cases are the ideals $O_P=0_{R \setminus P}$ and $m(I)=0_{1+I}$ where $1+I=\{1+i: i\in I \}$. On the other words, if $Y\sub \Sp(R)$ and $k(Y)=\langle 0\rangle $, then the quasi-pure part (the zero-component) of every ideal (prime ideal) of $R$ is a strong $\H_Y$-ideal. Consequently  every pure ideal  is a strong $\H_Y$-ideal. Recall that an element $a$  of $R$ is called (Von Neumann) regular if $a=a^2b$, for some $b\in R$; an ideal $I$ is said to be regular, if every element of $I$ is regular and $R$ is called regular if each elements of $R$ is regular.  It is easy to see that every regular ideal is a pure ideal. Thus every minimal ideal, every summand of any ring and the socle of a reduced ring are pure (for example see \cite{aliabady2017regular}), hence they all are strong $\H_Y$-ideal.
\end{Rem}

\begin{Rem}
    In $C(X)$ if either $\Ma(C(X))\sub Y$ or $\Mi(C(X))\sub Y$, then $k(Y)=\langle 0\rangle$ is a strong $\H_Y$-ideal. Hence every minimal prime ideal is a strong $\H_Y$-ideal. Thus for every $A\sub \beta X$, $O^A$ which is an intersection of minimal prime ideals is a strong $\H_Y$-ideal. Thus, if $A$ is a round subset of $\beta X$ (i.e., from $A\sub cl_{\beta X} Z(f)$, it follows that $A\sub int_{\beta X} cl_{\beta X} Z(f)$), then $M^A$ is a strong $\H_Y$-ideal, too. By \cite[7E]{gillman1960rings}, $C_K(X)=O^{\beta X\setminus X}$ and by \cite[Theorem 3.1]{johnson1973functions}, $C_{\psi}(X)=O^{\beta X \setminus \nu X}$, so $C_K(X)$ and $C_\psi(X)$ are strong $\H_Y$-ideals.
\end{Rem}

In the sequel we focus on answering this question that ``What happens when all the ideals of a ring are either strong $\H_Y$-ideals or $\H_Y$-ideals?'', which gives another characterizations of regular rings. First we give the following lemma which is easy to prove.

\begin{Lem}
    Suppose that $ Y \subseteq \Sp(R) $. Then every finitely generated strong $\H_Y$-ideal of $R$ is a $ Y $-Hilbert ideal. Also, if $a \in R$, then the following  are equivalent.
    \begin{itemize}
    	\item[(a)] $ \langle a \rangle$ is an $ \H_Y $-ideal.
    	\item[(b)] $ \langle a\rangle $ is a strong $ \H_Y $-ideal.
    	\item[(c)] $ \langle a \rangle $ is $Y$-Hilbert ideal.
    \end{itemize}
\label{strong H ideal=Y-Hilbert ideal}
\end{Lem}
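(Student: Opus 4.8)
The plan is to prove the first claim and then establish the equivalence of the three conditions for a principal ideal $\langle a\rangle$. For the first claim, suppose $I=\langle a_1,\dots,a_n\rangle$ is a finitely generated strong $\H_Y$-ideal. The key observation is that $F=\{a_1,\dots,a_n\}$ is a finite subset of $I$, so by the characterization in Proposition \ref{strong}(k) we have $kh_Y(F)\sub I$. On the other hand, $kh_Y(F)$ is always a strong $\H_Y$-ideal (in fact the smallest one containing $F$, as noted in the text following Definition of $Y$-Hilbert ideal), and since each $a_i\in kh_Y(F)$ we get $I=\langle a_1,\dots,a_n\rangle\sub kh_Y(F)$. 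Hence $I=kh_Y(F)=kh_Y(I)$, which is exactly the defining condition for $I$ to be a $Y$-Hilbert ideal.

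For the equivalence (a) $\LRTA$ (b) $\LRTA$ (c), first recall the general implications already established in the text: every $Y$-Hilbert ideal is a strong $\H_Y$-ideal, and every strong $\H_Y$-ideal is an $\H_Y$-ideal. This gives (c) $\RTA$ (b) $\RTA$ (a) with no extra work. The substance is therefore the reverse direction (a) $\RTA$ (c), which I would obtain by showing directly that an $\H_Y$-ideal generated by a single element is already a $Y$-Hilbert ideal. Suppose $\langle a\rangle$ is an $\H_Y$-ideal. By condition (e) of Proposition \ref{hyideal}, since $a\in\langle a\rangle$, we have $kh_Y(a)\sub\langle a\rangle$. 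Conversely $a\in kh_Y(a)$, so $\langle a\rangle\sub kh_Y(a)$ because $kh_Y(a)$ is an ideal. Combining these, $\langle a\rangle=kh_Y(a)=kh_Y(\langle a\rangle)$, so $\langle a\rangle$ is a $Y$-Hilbert ideal, establishing (a) $\RTA$ (c).

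I would then close the cycle by remarking that the three implications (a) $\RTA$ (c) $\RTA$ (b) $\RTA$ (a) together yield the full equivalence. The essential mechanism in both claims is the identity, valid for a single generator $a$, that an $\H_Y$-ideal must contain $kh_Y(a)$ while the principal ideal is always contained in $kh_Y(a)$; the finitely generated case reduces to this through Proposition \ref{strong}(k) applied to the generating set. There is no real obstacle here: the only point requiring a moment of care is that the $\H_Y$-ideal condition for a single element already forces the sharper $Y$-Hilbert equality in the principal case, whereas for general finitely generated ideals one genuinely needs the \emph{strong} hypothesis to invoke the finite-set condition $kh_Y(F)\sub I$.
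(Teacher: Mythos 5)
Your proof is correct and is exactly the argument the paper has in mind (the lemma is stated there without proof, as "easy to prove"): the strong $\H_Y$-condition applied to the finite generating set $F$ gives $kh_Y(F)\sub I$, the reverse inclusion is automatic, and $kh_Y(F)=kh_Y(\langle F\rangle)$ yields $I=kh_Y(I)$; the principal case follows from condition (e) of Proposition \ref{hyideal} together with the trivial implications (c) $\RTA$ (b) $\RTA$ (a). No gaps.
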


\begin{Pro}
    Let $Y\sub \Sp(R)$, then the following statements are equivalent:
    \begin{itemize}
        \item[(a)] Every ideal of $R$ is a strong $\H_Y $-ideal.
        \item[(b)] Every finitely generated ideal of $R$ is a strong $\H_Y $-ideal.
        \item[(c)] Every finitely generated ideal of $R$ is a $Y $-Hilbert ideal.
        \item[(d)] Every ideal of $R$ is an $\H_Y $-ideal.
        \item[(e)] Every principal ideal of $R$ is an $\H_Y $-ideal.
        \item[(f)] Every principal ideal of $R$ is a strong $\H_Y$-ideal.
        \item[(g)] Every principal ideal of $R$ is a $Y $-Hilbert ideal.
        \item[(h)] $k(Y)=\langle 0\rangle $ and $R$ is a regular ring.
        \item[(k)] $k(Y)=\langle 0\rangle $ and every essential ideal of $R$ is a strong $\H_Y $-ideal.
        \item[(l)] $k(Y)=\langle 0\rangle $ and every  essential ideal of $R$ is an $\H_Y $-ideal.
    \end{itemize}
    \label{H-ideal-regular}
\end{Pro}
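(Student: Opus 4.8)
The plan is to establish the equivalence by proving a circle of implications that passes through the regularity condition (h), together with a few short direct equivalences among the ``strong,'' ``$\H_Y$,'' and ``$Y$-Hilbert'' variants. The key structural fact driving everything is Lemma \ref{strong H ideal=Y-Hilbert ideal}: for a single element $a$, the three conditions ``$\Ge{a}$ is an $\H_Y$-ideal,'' ``$\Ge{a}$ is a strong $\H_Y$-ideal,'' and ``$\Ge{a}$ is a $Y$-Hilbert ideal'' are equivalent, and every finitely generated strong $\H_Y$-ideal is a $Y$-Hilbert ideal. This immediately collapses the principal-ideal conditions (e), (f), (g) into one another, and likewise helps identify (b) with (c). First I would record these cheap equivalences: (a)$\RTA$(b) and (d)$\RTA$(e) are trivial restrictions; (a)$\RTA$(d) holds because every strong $\H_Y$-ideal is an $\H_Y$-ideal; and (e)$\LRTA$(f)$\LRTA$(g) as well as (b)$\LRTA$(c) follow directly from Lemma \ref{strong H ideal=Y-Hilbert ideal}.

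Next I would reduce the ``global'' statements to the principal case and extract regularity. The heart of the matter is the implication (e)$\RTA$(h), i.e.\ that if every principal ideal is an $\H_Y$-ideal (equivalently a $Y$-Hilbert ideal, by the lemma), then $k(Y)=\Ge{0}$ and $R$ is regular. To get $k(Y)=\Ge{0}$, apply the hypothesis to $\Ge{0}$ itself: since $\Ge{0}=kh_Y(0)=k(Y)$ is forced to be the smallest $\H_Y$-ideal and is principal, the hypothesis that it is a $Y$-Hilbert ideal gives $k(Y)=\Ge{0}$. For regularity, fix $a\in R$; since $\Ge{a}$ is a $Y$-Hilbert ideal we have $\Ge{a}=kh_Y(a)$. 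Using $k(Y)=\Ge{0}$ and Lemma \ref{introduction}, I can compute $\An(a)=kh_Y^c(a)$, and the idea is to show $h_Y(a)$ and $h_Y^c(a)$ partition $Y$ into clopen pieces so that $\Ge{a}$ and $\An(a)$ together generate $R$; concretely, $\Ge{a}=kh_Y(a)$ being principal and idempotent-free forces $a$ to be regular, i.e.\ $a\in\Ge{a^2}$, which is the standard characterization of a regular element. The converse (h)$\RTA$(a) is where regularity pays off: in a regular ring every finitely generated ideal is generated by an idempotent, hence is a direct summand, hence pure, and by Remark \ref{OPMI} (since $k(Y)=\Ge{0}$) every pure ideal is a strong $\H_Y$-ideal; since an arbitrary ideal is a directed union of its finitely generated subideals and a directed union of strong $\H_Y$-ideals is again one, every ideal is a strong $\H_Y$-ideal.

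Finally I would close the loop through the essential-ideal conditions (k) and (l). The implications (a)$\RTA$(k) and (d)$\RTA$(l) are immediate once $k(Y)=\Ge{0}$ is known, since then $\Ge{0}$ is a strong $\H_Y$-ideal and the global hypotheses specialize to essential ideals. The substantive direction is (l)$\RTA$(h) (and the analogous (k)): assuming $k(Y)=\Ge{0}$ and that every essential ideal is an $\H_Y$-ideal, I would show $R$ is regular by proving $R$ is reduced and every principal ideal is generated by an idempotent. The trick is that for any $a$, the ideal $\Ge{a}\oplus\An(a)$ is essential (its closure meets every nonzero ideal because $\An(\Ge{a}\oplus\An(a))=\An(a)\cap\An^2(a)=\Ge{0}$ in the reduced case), hence an $\H_Y$-ideal; combining this with Lemma \ref{introduction} to identify annihilators with kernels of cozero-type sets yields $a\in\Ge{a^2}$, giving regularity.

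The main obstacle I anticipate is the implication (l)$\RTA$(h) (equivalently (k)$\RTA$(h)): passing from information about \emph{essential} ideals only to full regularity is more delicate than the principal-ideal route, since one must manufacture essential ideals (typically via sums of the form $\Ge{a}+\An(a)$) on which to apply the $\H_Y$-hypothesis and then argue that the resulting membership relation $a\in kh_Y(a)=\Ge{a}$ forces $a$ to be regular. Establishing reducedness first (which follows from Lemma \ref{semiprime}, as every $\H_Y$-ideal is semiprime and hence $\Ge{0}$ being essential-or-handled gives $\Rad(R)=\Ge{0}$) will be the key preparatory step that makes the essentiality argument go through.
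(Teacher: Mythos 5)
Your overall architecture matches the paper's: the cheap implications among (a)--(g) via Lemma \ref{strong H ideal=Y-Hilbert ideal}, the direction (h)$\RTA$(a) via purity and Remark \ref{OPMI}, and the essential-ideal conditions closed off by the fact that a reduced ring in which every essential ideal is semiprime is regular (you unpack this fact with the $\Ge{a}+\An(a)$ construction; the paper simply cites it as well known together with Lemma \ref{semiprime}). Two small remarks on that last part: reducedness in (k)/(l) is immediate from $\Rad(R)\sub k(Y)=\Ge{0}$, not from applying the hypothesis to $\Ge{0}$, which need not be essential; and the essential ideal to which you should apply the hypothesis is $\Ge{a^2}+\An(a)$ (equal to $\Ge{a^2}+\An(a^2)$ by reducedness), whose semiprimeness gives $a=a^2r+s$ with $sa=0$, hence $\bigl(a(1-ar)\bigr)^2=0$ and so $a=a^2r$.

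The one step that would fail as written is your derivation of regularity in (e)$\RTA$(h). You propose to show that $h_Y(a)$ and $h_Y^c(a)$ ``partition $Y$ into clopen pieces so that $\Ge{a}$ and $\An(a)$ together generate $R$.'' There is no reason for $h_Y(a)$ to be clopen, and $kh_Y(a)+kh_Y^c(a)$ is not $k\bigl(h_Y(a)\cap h_Y^c(a)\bigr)=R$ in general; indeed $\Ge{a}+\An(a)=R$ is essentially equivalent to the regularity of $a$, so it cannot be assumed en route to proving it, and the phrase ``principal and idempotent-free forces $a$ to be regular'' does not resolve into an argument. The correct and much shorter step is the one the paper uses: under (g) every principal ideal is a $Y$-Hilbert, hence semiprime, ideal (Lemma \ref{semiprime}), so from $h_Y(a)=h_Y(a^2)$ one gets $a\in kh_Y(a^2)=\Ge{a^2}$, i.e.\ $a=a^2b$ --- equivalently, invoke the standard characterization that a ring in which every (principal) ideal is semiprime is von Neumann regular. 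With that replacement your proof goes through and coincides in substance with the paper's.
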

\begin{proof} (a) $\RTA$ (b). It is clear.

    The implications (b) $\RTA$ (c) $\RTA$ (d) are straightforward.
    
    (d) $\RTA$ (e). It is evident.
    
    (e) $\RTA$ (f) $\RTA$ (g). They follow from Lemma \ref{strong H ideal=Y-Hilbert ideal}.

    (g) $\RTA$ (h). By the hypothesis the zero ideal is a $Y$-Hilbert and this implies that $k(Y)=\langle 0\rangle $. Also by the assumption, every ideal of $R$ is semiprime and consequently $R$ is a regular ring.

    (h) $\RTA$ (a). Sine $R$ is regular, for every ideal $I$ of $R$ we have $I=m(I)$ and so by Remark \ref{OPMI}, the result follows.

    (a) $\RTA$ (k)  $\RTA$ (l). They are trivial.

    (l) $\RTA$ (h). It is well-known and easy to be verified that if in  a reduced ring every essential ideal is a semiprime ideal then it is a regular ring. So by Lemma \ref{semiprime}, we are done.
\end{proof}

In the above proposition if we take either $\Ma(R)\sub Y$ or $\Mi(R)\sub Y$, then we can add the assertions: ``every ideal of $R$ is a $Y$-Hilbert ideal'' and `` Every essential ideal of $R$ is a  $Y$-Hilbert ideal''. In the following example we show that this is not true in general.

\begin{Exa}
    Suppose that $ R = C(\N)$, $Y  = {\cal{B}}(R)$ and $M$ is a maximal ideal of $R$. Since the zero ideal of $R$ is a fixed-place ideal, by \cite[Theorem 4.7]{aliabad2017Bourbarki}, it follows that there is an ultrafilter $\mathscr{U}$ on $Y$ such that $M = \J({\mathscr{U}}) = \{ a \in R : h_Y (a) \in \mathscr{U} \}$. Set $ \mathscr{F} = \mathscr{U} \cap \H_Y  $, then $ \mathscr{F} $ is a $\H_Y $-filter on $Y $ and
    \begin{align*}
    \H_Y^{-1}(\mathscr{F}) & = \{ a \in R : h_Y (a) \in \mathscr{F} \} \\
    					   & = \{ a \in R : h_Y (a) \in \mathscr{U} \cap \H_Y  \} \\
    					   & = \{ a \in R : h_Y (a) \in \mathscr{U} \} \\
    					   & = \J ( \mathscr{U} ) = M.
    \end{align*}
    Thus $M$ is a strong $\H_Y $-ideal. Since $R$ is regular ring, every ideal of $R$ is an intersection of maximal ideals and therefore every ideal is a strong $\H_Y $-ideal. But, if $M$ is a free maximal ideal, then $M$ is not a $Y $-Hilbert ideal.
    \label{ex 5.11}
\end{Exa}

\begin{Cor}
    Let $Y$ be a finite subset of $\Sp(R)$. If $I$ is an ideal of $R$, then the following statements are equivalent.
    \begin{itemize}
        \item[(a)] $I$ is an $\H_Y$-ideal.
        \item[(b)] $I$ is a strong $\H_Y$-ideal.
        \item[(c)] $I$ is a $Y$-Hilbert ideal.
    \end{itemize}
\end{Cor}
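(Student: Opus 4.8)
The plan is to prove only the single nontrivial implication (a) $\RTA$ (c) and then close the cycle using facts already recorded in the paper. Indeed, immediately after the definition of $Y$-Hilbert ideal it is noted that every $Y$-Hilbert ideal is a strong $\H_Y$-ideal and every strong $\H_Y$-ideal is an $\H_Y$-ideal, which gives (c) $\RTA$ (b) $\RTA$ (a) with no use of finiteness. So it remains only to show that, when $Y$ is finite, an $\H_Y$-ideal $I$ satisfies $I=kh_Y(I)$, i.e. is a $Y$-Hilbert ideal.

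The inclusion $I\sub kh_Y(I)$ is automatic, so the content is $kh_Y(I)\sub I$. The key idea, and the only place finiteness is used, is to produce a single element $a\in I$ with $h_Y(a)=h_Y(I)$. Write $h_Y(I)=\{P\in Y: I\sub P\}$ and let $Q_1,\dots,Q_r$ be the finitely many members of $Y$ that do not contain $I$. Since $I\nsub Q_j$ for each $j$, prime avoidance applied to the finite family of primes $Q_1,\dots,Q_r$ yields an element $a\in I\set\bigcup_{j}Q_j$. First I would check that this $a$ works: because $a\in I$, every $P\in Y$ with $I\sub P$ also contains $a$, so $h_Y(I)\sub h_Y(a)$; conversely, if $P\in Y$ contains $a$ then $P$ is not one of the $Q_j$, hence $I\sub P$, giving $h_Y(a)\sub h_Y(I)$. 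Thus $h_Y(a)=h_Y(I)$, and consequently $kh_Y(a)=kh_Y(I)$.

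To finish I would invoke the characterization of $\H_Y$-ideals in Proposition \ref{hyideal}: since $a\in I$ and $I$ is an $\H_Y$-ideal, condition (e) gives $kh_Y(a)\sub I$. Combining this with the previous paragraph yields $kh_Y(I)=kh_Y(a)\sub I$, so $I=kh_Y(I)$ is a $Y$-Hilbert ideal. This establishes (a) $\RTA$ (c) and completes the cycle (a) $\RTA$ (c) $\RTA$ (b) $\RTA$ (a), hence all three are equivalent.

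The hard part is really just the reduction to a single element. The defining property of an $\H_Y$-ideal only controls $kh_Y(a)$ for \emph{individual} $a\in I$, whereas $kh_Y(I)$ a priori involves the whole family $\{h_Y(a):a\in I\}$; in general an $\H_Y$-ideal need not be a $Y$-Hilbert ideal. Finiteness of $Y$ is exactly what lets prime avoidance compress that family into the single set $h_Y(a)$. One could alternatively observe that the same prime-avoidance computation forces the $h_Y$-property for a finite $Y$, whence $\H_Y$-ideals and strong $\H_Y$-ideals coincide automatically; but the direct argument above is shorter and avoids that detour.
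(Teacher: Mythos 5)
Your proposal is correct and is essentially the paper's own proof: the paper likewise reduces everything to (a) $\Rightarrow$ (c), uses prime avoidance on the finitely many $Q\in h_Y^c(I)$ to find $x\in I$ with $h_Y(x)\sub h_Y(I)$, and then concludes $kh_Y(I)\sub kh_Y(x)\sub I\sub kh_Y(I)$ via condition (e) of Proposition \ref{hyideal}. The only cosmetic difference is that you verify the full equality $h_Y(a)=h_Y(I)$ where the paper only records the one inclusion it needs.
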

\begin{proof}
    It suffices to show (a) $\RTA$ (c). To see this, suppose that $I$ is an $\H_Y$-ideal. Since $Y$ is finite, by prime avoidance theorem, there exists some $x\in I\set\cup_{Q\in h_Y^c(I)}Q$. Thus, clearly, $h_Y(x)\sub h_Y(I)$ and so we have $kh_Y(I)\sub kh_Y(x)\sub I\sub kh_Y(I)$. Therefore, $I=kh_Y(I)$ and so $I$ is a $Y$-Hilbert ideal.
\end{proof}

\section {Operations on $\H_Y$-ideals, strong $\H_Y$-ideals and $Y$-Hilbert ideals}

As the title of this section shows, it is devoted to considering quotients, products, homomorphic images of $\H_Y$-ideals, strong $\H_Y$-ideals and $Y$-Hilbert ideals.

We shall note that, a product of $\H_Y$-ideals (resp., strong  $\H_Y$-ideals  and $Y$-Hilbert ideals) is not necessarily an $\H_Y$-ideal (resp., a strong  $\H_Y$-ideal  and a $Y$-Hilbert ideal). For instance, if we set $R=\Z$ and $\Ma(R)\sub Y\sub \Sp(R)$ then for every prime number $p$, the ideal $J=p\Z$ is a strong  $\H_Y$-ideal while $J^2=p^2\Z$ is not even a semiprime ideal. In general we have the following proposition.

\begin{Pro}
    Let $R$ be a ring and $\{J_i\}_{i=1}^{n}$ be a finite family of strong $\H_Y$-ideals  of $R$, then $\prod_{i=1}^{n}J_i$ is a strong $\H_Y$-ideal \ff $\bigcap_{i=1}^{n}J_i=\prod_{i=1}^{n}J_i$. The same statements hold for $\H_Y $-ideals and $Y$-Hilbert ideals.
\end{Pro}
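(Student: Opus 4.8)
The plan is to prove the two implications separately, observing that the forward direction is uniform across all three classes of ideals while the converse reduces to closure under finite intersection. Throughout I would keep in mind the trivial containment $\prod_{i=1}^n J_i \sub \bigcap_{i=1}^n J_i$, which holds for any ideals.

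For the direction ($\RTA$), suppose $\prod_{i=1}^n J_i$ is a strong $\H_Y$-ideal. Since every strong $\H_Y$-ideal is an $\H_Y$-ideal, Lemma \ref{semiprime} shows it is semiprime, hence equal to its own radical. The key observation is the containment $\big(\bigcap_{i=1}^n J_i\big)^n \sub \prod_{i=1}^n J_i$: if $x \in \bigcap_{i=1}^n J_i$, then placing one copy of $x$ into each factor exhibits $x^n$ as an element of $J_1 J_2 \cdots J_n$. Consequently $\bigcap_{i=1}^n J_i \sub \Rad\big(\prod_{i=1}^n J_i\big) = \prod_{i=1}^n J_i$, and together with the trivial containment this gives $\bigcap_{i=1}^n J_i = \prod_{i=1}^n J_i$. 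Because every $Y$-Hilbert ideal is a strong $\H_Y$-ideal and every strong $\H_Y$-ideal is an $\H_Y$-ideal, the same argument covers the $\H_Y$-ideal and $Y$-Hilbert cases verbatim; all that is used is semiprimeness.

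For the direction ($\LTA$), assume $\bigcap_{i=1}^n J_i = \prod_{i=1}^n J_i$. It then suffices to show that a finite intersection of strong $\H_Y$-ideals is again a strong $\H_Y$-ideal, which I would verify directly from Proposition \ref{strong}(k): given a finite set $F \sub \bigcap_{i=1}^n J_i$, one has $F \sub J_i$ and hence $kh_Y(F) \sub J_i$ for each $i$, so $kh_Y(F) \sub \bigcap_{i=1}^n J_i$. The identical argument using Proposition \ref{hyideal}(e) handles $\H_Y$-ideals, and for $Y$-Hilbert ideals one simply notes that each $J_i = kh_Y(J_i)$ is an intersection of members of $Y$, so $\bigcap_{i=1}^n J_i$ is again such an intersection and therefore a $Y$-Hilbert ideal. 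In every case $\prod_{i=1}^n J_i = \bigcap_{i=1}^n J_i$ inherits the relevant property.

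The argument is essentially routine. The one step demanding care is the power containment $\big(\bigcap_i J_i\big)^n \sub \prod_i J_i$, which is precisely what lets semiprimeness collapse the intersection onto the product in the forward direction; I do not expect any genuine obstacle beyond recognizing that this single fact, combined with Lemma \ref{semiprime}, disposes of all three classes simultaneously.
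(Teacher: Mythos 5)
Your proof is correct and follows the same route the paper intends: its entire proof is ``By Lemma \ref{semiprime}, it is clear,'' i.e.\ the semiprimeness of (strong) $\H_Y$-ideals together with $\bigl(\bigcap_i J_i\bigr)^n \sub \prod_i J_i$ for the forward direction, and closure of each class under finite intersection for the converse. You have simply written out the details the paper leaves implicit.
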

\begin{proof}
    By Lemma \ref{semiprime}, it is clear.
\end{proof}

Let $f:R\to R'$ be a ring homomorphism and $I$ and $J$ be ideals of $R$ and $R'$, respectively. Then $I^e$ and $J^c$  denote the extension and  the contraction of the ideals $I$ and $J$, (i.e., $\langle f(I)\rangle$ and $f^{-1}(J)$), respectively.  In the following proposition we study the contraction of (strong) $\H_Y$-ideals and $Y$-Hilbert ideals under a ring homomorphism.

\begin{Pro}
	Let  $f:R\to R'$ be a ring homomorphism, $X\sub \Sp(R)$ and $Y\sub \Sp(R')$.  Every strong $\H_Y $-ideal  of $R'$ contracts to a strong $\H_X $-ideal  of $R$ \ff every $P\in Y$ contracts to a strong $\H_X $-ideal. The same statements hold for $\H_Y $-ideals and $Y$-Hilbert ideals.
	\label{h-ideal homo}
\end{Pro}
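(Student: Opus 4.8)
The statement is a biconditional of the form "every strong $\H_Y$-ideal of $R'$ contracts to a strong $\H_X$-ideal iff every $P\in Y$ contracts to one." The plan is to prove both directions, and the first thing I would observe is that the forward direction is essentially trivial: each $P\in Y$ is itself a prime ideal, hence a prime strong $\H_Y$-ideal (by Corollary \ref{corollary}, since $P=kh_Y(P)$ is visibly a $Y$-Hilbert ideal, and every $Y$-Hilbert ideal is a strong $\H_Y$-ideal). So if every strong $\H_Y$-ideal contracts to a strong $\H_X$-ideal, then in particular each $P\in Y$ does. The three parallel statements (for $\H_Y$-ideals and $Y$-Hilbert ideals) follow by the same observation, once we note that $P=kh_Y(P)$ makes every $P\in Y$ a $Y$-Hilbert ideal, hence also an $\H_Y$-ideal.

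The substance is the converse. Assume every $P\in Y$ contracts to a strong $\H_X$-ideal, and let $J$ be an arbitrary strong $\H_Y$-ideal of $R'$; I want $J^c=f^{-1}(J)$ to be a strong $\H_X$-ideal of $R$. The key structural fact to exploit is Corollary \ref{corollary}(a): $J$ is the intersection of the minimal prime strong $\H_Y$-ideals over it, but more usefully $J=kh_Y(J)=\bigcap\{P\in Y: J\sub P\}$, so $J$ is an intersection of elements of $Y$. Contraction commutes with intersection, since $f^{-1}\!\big(\bigcap_\la P_\la\big)=\bigcap_\la f^{-1}(P_\la)$, so $J^c=\bigcap_{P\in h_Y(J)} P^c$. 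By hypothesis each $P^c$ is a strong $\H_X$-ideal. The remaining step is then to show that an arbitrary intersection of strong $\H_X$-ideals is again a strong $\H_X$-ideal, which is immediate from the characterization in Proposition \ref{strong}(k): if each $P^c$ satisfies $kh_X(F)\sub P^c$ for every finite $F\sub P^c$, then for a finite $F\sub\bigcap P^c$ we have $F\sub P^c$ for each index, hence $kh_X(F)\sub P^c$ for each index, hence $kh_X(F)\sub\bigcap P^c=J^c$. The same argument, verbatim, handles $\H_Y$-ideals via Proposition \ref{hyideal}(e) and $Y$-Hilbert ideals via the defining equation $I=kh_X(I)$.

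The one point deserving care, and the place I expect the only real friction, is the reduction "$J$ is an intersection of elements of $Y$." For a general strong $\H_Y$-ideal this is exactly the content of $J=kh_Y(J)$, and a strong $\H_Y$-ideal does satisfy this: taking $F$ to range over finite subsets of $J$, condition (k) of Proposition \ref{strong} gives $kh_Y(F)\sub J$, and since $J=\bigcup_{F\in{\bf{F}}(J)}kh_Y(F)$ (noted in the text after the definition of strong $\H_Y$-ideal), we get $J=kh_Y(J)$, i.e. $J$ is a $Y$-Hilbert ideal and hence an intersection of primes drawn from $Y$. Thus every strong $\H_Y$-ideal is automatically a $Y$-Hilbert ideal in this contraction argument, which is what makes the "intersection of $P$'s, each of which contracts well" strategy go through uniformly. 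I would therefore organize the write-up as: (i) note each $P\in Y$ is a strong $\H_Y$-ideal for the forward direction; (ii) for the converse, write $J=\bigcap_{P\in h_Y(J)}P$ using $J=kh_Y(J)$; (iii) intersect the contractions and invoke the "intersection of strong $\H_X$-ideals is a strong $\H_X$-ideal" lemma via Proposition \ref{strong}(k); then remark that the $\H_Y$-ideal and $Y$-Hilbert cases are identical with the corresponding characterizations substituted.
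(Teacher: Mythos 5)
Your forward direction is fine, and your argument for the $Y$-Hilbert case is also correct, since a $Y$-Hilbert ideal is by definition an intersection of members of $Y$ and contraction commutes with arbitrary intersections. But the converse for $\H_Y$-ideals and strong $\H_Y$-ideals has a genuine gap: the reduction you yourself flag as ``the one point deserving care'' is false. A strong $\H_Y$-ideal $J$ need not satisfy $J=kh_Y(J)$. Your derivation conflates $\bigcup_{F\in{\bf F}(J)}kh_Y(F)$ with $kh_Y(J)$: the former does equal $J$ when $J$ is a strong $\H_Y$-ideal, but since $h_Y(J)=\bigcap_{F\in{\bf F}(J)}h_Y(F)$ and $k$ reverses inclusions, one only obtains $kh_Y(F)\sub kh_Y(J)$ for each finite $F\sub J$, hence $J\sub kh_Y(J)$, with no reverse inclusion. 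The paper records a counterexample immediately after the definitions: with $Y=\Ma(C(\R))$ the ideal $O_0$ of $C(\R)$ is a strong $\H_Y$-ideal (an $sz$-ideal) that is not an intersection of maximal ideals, so $O_0\subsetneq kh_Y(O_0)=M_0$. Consequently the decomposition $J=\bigcap_{P\in h_Y(J)}P$ is unavailable for a general strong $\H_Y$-ideal, and the ``intersect the contractions'' strategy fails exactly in the two cases that carry the content of the proposition.

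The repair is to argue at the level of finite sets rather than decomposing $J$, which is what the paper does. Let $F_1$ be a finite subset of $J^c$ and $F_2$ a finite subset of $R$ with $h_X(F_1)\sub h_X(F_2)$. For $P\in Y$ with $f(F_1)\sub P$ one has $F_1\sub P^c$; since $P^c$ is by hypothesis a strong $\H_X$-ideal, Proposition \ref{strong} gives $F_2\sub P^c$, i.e.\ $f(F_2)\sub P$. Hence $h_Y(f(F_1))\sub h_Y(f(F_2))$, and since $f(F_1)$ is a finite subset of the strong $\H_Y$-ideal $J$, Proposition \ref{strong} yields $f(F_2)\sub J$, so $F_2\sub J^c$ and $J^c$ is a strong $\H_X$-ideal. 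The point is that the hypothesis on the members of $Y$ is used pointwise inside the verification of $h_Y(f(F_1))\sub h_Y(f(F_2))$, not to write $J$ itself as an intersection of elements of $Y$; the $\H_Y$-ideal case is identical with singletons in place of $F_1$.
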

\begin{proof}
	($\RTA$). Suppose  that $ J $ is a strong $ \H_Y $-ideal of $ R' $. If $ F_1 $ and $ F_2 $ are two arbitrary subsets of $ R $ which $ h_X(F_1) = h_X(F_2) $ and $ F_1 \subseteq J^c $, then
	\begin{align*}
	P‌ \in h_Y(f(F_1)) & \LRTA \quad f(F_1)‌ \subseteq P  \quad \LRTA \quad F_1 \subseteq P^c \\
					   & \LRTA \quad P^c \in h_X(F_1) \quad \LRTA \quad P^c \in h_X(F_2) \\
					   & \LRTA \quad F_2 \subseteq P^c \quad \LRTA \quad f(F_2) \subseteq P \\
					   & \LRTA \quad P‌\in h_Y(f(F_2)). 
	\end{align*}
	So $ h_Y(f(F_1)) = h_Y(f(F_2)) $ and $ f(F_1) \subseteq J $, hence $ f(F_2) \subseteq J $, by Proposition \ref{strong}. Thus $ F_2 \subseteq J^c $ and this implies that $‌J^c$ is a strong $ \H_Y $-ideal, by Proposition \ref{strong}.
	
	($\LTA$). It is clear.
\end{proof}

\begin{Cor}
    Let $I\sub J$ be a pair of ideals of $R$, $Y\sub\Sp(R)$ and $Y/I=\{P/I: P\in h_Y(I)\}$. Then $J/I$ is a strong $\H_{\frac{Y}{I}} $-ideal \ff $J$ is a strong $\H_Y $-ideal. Also, supposing that $I_{\lambda}$ is an ideal of $R$, for every $\lambda \in \Lambda $, if $\sum_{\lambda \in \Lambda}I_{\lambda}$ is a direct sum and a strong $\H_Y$-ideal, then $I_{\lambda}$ is a strong $\H_Y$-ideal, for every $ \lambda \in \Lambda $. The same statements hold for $\H_Y $-ideals  and $Y$-Hilbert ideals.
\end{Cor}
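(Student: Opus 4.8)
The plan is to read the whole corollary off Proposition \ref{h-ideal homo} applied to the canonical surjection $\pi:R\TOO R/I$, together with one non-formal observation: every point of $Y$ is already a strong $\H_Y$-ideal. Indeed, if $P\in Y$ and $F$ is a finite subset of $P$, then $P\in h_Y(F)$, hence $kh_Y(F)\sub P$; by condition (k) of Proposition \ref{strong} this means $P$ is a strong $\H_Y$-ideal.

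First I would settle the quotient equivalence. Writing $\ov R=R/I$ and $\ov Y=Y/I$, contraction along $\pi$ identifies the ideals of $\ov R$ with the ideals of $R$ containing $I$, and each $P/I\in\ov Y$ contracts to $P\in Y$. By the observation above every such $P$ is a strong $\H_Y$-ideal, so the right-hand condition of Proposition \ref{h-ideal homo} (source indexed by $Y$, i.e. $X=Y$, and target by $\ov Y$) holds; therefore every strong $\H_{\ov Y}$-ideal of $\ov R$ contracts to a strong $\H_Y$-ideal of $R$. Since $J/I$ contracts to $J$, this gives at once the implication ``$J/I$ a strong $\H_{Y/I}$-ideal $\RTA$ $J$ a strong $\H_Y$-ideal''. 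For the converse I would work directly from Proposition \ref{strong}: for a finite $F\sub J$,
\[ kh_{\ov Y}(\ov F)=k\big(h_Y(F)\cap h_Y(I)\big)\big/I=kh_Y(\Ge{F}+I)\big/I, \]
so $J/I$ is a strong $\H_{\ov Y}$-ideal precisely when $kh_Y(\Ge{F}+I)\sub J$ for every finite $F\sub J$. This containment is the main obstacle: it does not follow formally from the finite-subset criterion, because $kh_Y(\Ge{F}+I)$ is the smallest $Y$-Hilbert ideal over $\Ge{F}+I$ and can exceed $J$ when $J$ is not itself $Y$-Hilbert; I expect it to go through only under $k(Y)=\Ge{0}$ (or a $Y$-Hilbert hypothesis on $I$), which I would state as the working assumption for the converse.

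Finally, for the direct-sum assertion I would fix $\mu$ and put $J'=\bigoplus_{\la\ne\mu}I_\la$, so that $J=I_\mu\oplus J'$ and $I_\mu J'\sub I_\mu\cap J'=\Ge{0}$. The strategy is to realise $I_\mu$ as an intersection of strong $\H_Y$-ideals via $I_\mu=J\cap\An(J')$: the inclusion $\sub$ is immediate, while $\supseteq$ reduces to discarding $J'\cap\An(J')$, which consists of square-zero elements and hence vanishes once $R$ is reduced. Granting that $\Ge{0}$ is a strong $\H_Y$-ideal, $\An(J')=(\Ge{0}:J')$ is a strong $\H_Y$-ideal by Proposition \ref{(J:I)} (see also Remark \ref{OPMI}), and the class of strong $\H_Y$-ideals is closed under intersection (immediate from condition (k) of Proposition \ref{strong}); so $I_\mu=J\cap\An(J')$ is a strong $\H_Y$-ideal. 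Here too the decisive hypothesis is $k(Y)=\Ge{0}$: it makes $R$ reduced (killing $J'\cap\An(J')$) and makes $\Ge{0}$ a strong $\H_Y$-ideal, and without it the conclusion can fail. The $\H_Y$-ideal and $Y$-Hilbert versions would follow by the same three steps, using Proposition \ref{hyideal} in place of Proposition \ref{strong} and the defining equality $I=kh_Y(I)$ for the $Y$-Hilbert case.
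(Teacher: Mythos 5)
Your handling of the first implication is sound, and it is the only part of the corollary that Proposition \ref{h-ideal homo} (to which the paper appeals implicitly, giving no separate proof) actually delivers: every $P\in Y$ is a strong $\H_Y$-ideal, so every strong $\H_{Y/I}$-ideal of $R/I$ contracts along $\pi:R\TOO R/I$ to a strong $\H_Y$-ideal of $R$, and $J/I$ contracts to $J$. Your suspicion about the converse is also justified --- it genuinely fails --- but the patch you propose is not the right one: $k(Y)=\Ge{0}$ does not suffice. Take $R=\prod_{n\in\N}\R$, $Y=\{M_n\}_{n\in\N}$ with $M_n=\{f:f(n)=0\}$, and $I=J=C_K(\N)$, the ideal of finitely supported elements. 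Then $k(Y)=\Ge{0}$ and $J$ is a strong $\H_Y$-ideal (for finite $F\sub J$ the ideal $kh_Y(F)$ consists of elements supported on the finite set where some member of $F$ is nonzero), yet $h_Y(I)=\tohi$, so $Y/I=\tohi$ and the only strong $\H_{Y/I}$-ideal of $R/I$ is $k(\tohi)=R/I\neq J/I$. Your own reduction $kh_{Y/I}(\ov{F})=kh_Y(\Ge{F}+I)/I$ points to the hypothesis that does work: if $I\sub k(Y)$, then $h_Y(I)=Y$ and $kh_{Y/I}(\ov{F})=kh_Y(F)/I$ for every finite $F$, after which both directions are immediate.

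Your direct-sum argument ($I_\mu=J\cap\An(J')$, with $J'\cap\An(J')$ killed by reducedness and $\An(J')$ a strong $\H_Y$-ideal by Proposition \ref{(J:I)}) is correct under your added hypothesis $k(Y)=\Ge{0}$, and here that hypothesis is essentially forced by the conclusion: every strong $\H_Y$-ideal contains $k(Y)=kh_Y(\{0\})$, so if two summands $I_{\la_1},I_{\la_2}$ are nonzero the conclusion already implies $k(Y)\sub I_{\la_1}\cap I_{\la_2}=\Ge{0}$. Without it the printed statement fails: for $R=\Z\times\Z\times\Z$, $Y=\{P\}$ with $P=\Z\times\Z\times 0$ and $J=P=(\Z\times 0\times 0)\oplus(0\times\Z\times 0)$, the ideal $J=k(Y)$ is a strong $\H_Y$-ideal while neither summand contains $k(Y)$. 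In short, your proposal is internally coherent but proves a corrected, weaker statement than the one in the paper; the remaining step is not a gap you failed to close but one that cannot be closed, and you should replace the ``working assumptions'' by the counterexamples above together with the hypotheses ($I\sub k(Y)$ for the quotient claim, $k(Y)=\Ge{0}$ for the direct sum) under which the arguments go through.
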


The following corollaries show  the relation between the strong $\H_Y $-ideals of two different subspaces of $\Sp(R)$. Note that the same statements hold for the $\H_Y $-ideals and the $Y$-Hilbert ideals.

\begin{Cor}
	Let $X,Y \sub \Sp(R)$. Then we have the following facts:
	\begin{itemize}
		\item[(a)] Every element of $X$ is a strong $\H_Y$-ideal \ff every strong $\H_X$-ideal  is a strong $\H_Y$-ideal.
		\item[(b)] If $X\sub Y$, then every strong $\H_X$-ideal is a strong $\H_Y$-ideal.
		\item[(c)] If $X\sub Y$ and every element of $Y$ is a strong $\H_X$-ideal, then the strong $\H_X$-ideal and the strong $\H_Y$-ideals coincide.
	\end{itemize}
	\label{Y and X -ideal}
\end{Cor}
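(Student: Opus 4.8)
The plan is to obtain all three parts as consequences of Proposition \ref{h-ideal homo} specialized to the identity homomorphism $\mathrm{id}\colon R\to R$, for which contraction of ideals is trivial.

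For (a) I would apply Proposition \ref{h-ideal homo} to $\mathrm{id}\colon R\to R$, taking the family living in the codomain to be $X$ and the family living in the domain to be $Y$ (so that the two spectra in the proposition are swapped relative to the statement of the corollary). Since contraction along the identity leaves every ideal unchanged, the assertion ``every strong $\H_X$-ideal of $R$ contracts to a strong $\H_Y$-ideal \ff every $P\in X$ contracts to a strong $\H_Y$-ideal'' becomes exactly ``every strong $\H_X$-ideal is a strong $\H_Y$-ideal \ff every element of $X$ is a strong $\H_Y$-ideal'', which is (a). The only point requiring care is the correct matching of the domain and codomain families.

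For (b) I would first record the elementary fact that each $P\in Y$ satisfies $kh_Y(P)=P$: since $P\in Y$ and $P\sub P$ we have $P\in h_Y(P)$, whence $kh_Y(P)\sub P$, while the reverse inclusion holds because every member of $h_Y(P)$ contains $P$. Thus every element of $Y$ is a $Y$-Hilbert ideal and hence a strong $\H_Y$-ideal. Consequently, when $X\sub Y$, each element of $X$ belongs to $Y$ and is therefore a strong $\H_Y$-ideal; part (a) then immediately gives that every strong $\H_X$-ideal is a strong $\H_Y$-ideal.

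Part (c) follows by combining (a) and (b). From $X\sub Y$ and (b), every strong $\H_X$-ideal is a strong $\H_Y$-ideal. For the reverse inclusion I would invoke (a) with the roles of $X$ and $Y$ interchanged: every strong $\H_Y$-ideal is a strong $\H_X$-ideal \ff every element of $Y$ is a strong $\H_X$-ideal, and the latter is precisely the hypothesis. Hence the two classes of ideals coincide. The same reasoning transcribes verbatim to $\H_Y$-ideals and to $Y$-Hilbert ideals, using the corresponding clauses of Proposition \ref{h-ideal homo}. The main (and essentially only) obstacle is bookkeeping: fixing once and for all which subfamily of $\Sp(R)$ is fed into the domain slot and which into the codomain slot of Proposition \ref{h-ideal homo}; after that the argument is purely formal.
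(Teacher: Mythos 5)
Your proposal is correct and follows essentially the same route as the paper, which simply applies Proposition \ref{h-ideal homo} to the identity map from $(R,Y)$ to $(R,X)$; your careful matching of domain and codomain families, and the observation that each $P\in Y$ equals $kh_Y(P)$ and is therefore a strong $\H_Y$-ideal, correctly fill in the details the paper leaves implicit.
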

\begin{proof}
	If we take the identity mapping from $(R,Y)$ to $(R,X)$ and apply Proposition \ref{h-ideal homo}, then they conclude.
\end{proof}

\begin{Cor}
    Let $X,Y \sub \Sp(R)$, $I_\circ = k(X) \sub k(Y)$ and $X \sub \Mi(I_\circ)$. Every strong $\H_X$-ideal ($\H_X $-ideal) is a strong $\H_Y$-ideal ($\H_Y $-ideal) \ff $k(X) = k(Y)$.
\end{Cor}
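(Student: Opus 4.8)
The plan is to prove the two implications separately, handling the strong $\H$-ideal and $\H$-ideal cases in parallel, since every result I invoke has both versions. For the implication from the coincidence of families to $k(X)=k(Y)$, I would use that $k(X)=kh_X(0)$ is the smallest strong $\H_X$-ideal in $R$, hence in particular is itself a strong $\H_X$-ideal. By hypothesis it is then a strong $\H_Y$-ideal, and since $k(Y)$ is the smallest strong $\H_Y$-ideal, this forces $k(Y)\sub k(X)$. Together with the standing assumption $k(X)=I_\circ\sub k(Y)$ this yields $k(X)=k(Y)$. Notice that this half uses neither the minimality hypothesis $X\sub\Mi(I_\circ)$ nor Theorem \ref{minimal of I}.

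Conversely, assume $k(X)=k(Y)$. By Corollary \ref{Y and X -ideal}(a) it suffices to show that each $P\in X$ is itself a strong $\H_Y$-ideal. So fix $P\in X$. Since $X\sub\Mi(I_\circ)=\Mi(k(X))$ and $k(X)=k(Y)$, the prime $P$ is a minimal prime over $k(Y)$. As $k(Y)$ is the smallest strong $\H_Y$-ideal, it is in particular a strong $\H_Y$-ideal, so Theorem \ref{minimal of I}, applied with $I=k(Y)$ and $P\in\Mi(k(Y))$, shows that $P$ is a strong $\H_Y$-ideal. Hence every element of $X$ is a strong $\H_Y$-ideal, and Corollary \ref{Y and X -ideal}(a) upgrades this to the desired conclusion that every strong $\H_X$-ideal is a strong $\H_Y$-ideal.

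The parenthetical $\H_Y$-ideal statement follows by the identical argument, replacing ``strong $\H$-ideal'' by ``$\H$-ideal'' throughout: Theorem \ref{minimal of I} covers both cases explicitly, and Corollary \ref{Y and X -ideal}(a) holds for $\H$-ideals by the remark preceding that corollary. There is no genuine obstacle in this proof; the one step that must be recognized is the identification $X\sub\Mi(k(X))=\Mi(k(Y))$, which is exactly what converts the minimality hypothesis $X\sub\Mi(I_\circ)$ together with $k(X)=k(Y)$ into the applicability of Theorem \ref{minimal of I} to each $P\in X$.
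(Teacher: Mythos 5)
Your proof is correct and follows essentially the same route as the paper: the forward direction uses that $k(X)$ is the smallest strong $\H_X$-ideal (hence a strong $\H_Y$-ideal by hypothesis) together with the minimality of $k(Y)$, and the converse applies Theorem \ref{minimal of I} to $k(Y)=I_\circ$ and then Corollary \ref{Y and X -ideal}(a). Your added observation that the forward implication does not need $X\sub\Mi(I_\circ)$ is accurate but does not change the argument.
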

\begin{proof}
    We just prove the  part concerned with the strong $ \H_Y $-ideal. The part concerned  with the $\H_X$-ideal has a same proof.

    $\RTA$) Clearly, $I_\circ$ is a strong $\H_X$-ideal and therefore $I_\circ$ is a strong $\H_Y$-ideal. Again, $k(Y)$ is the smallest strong $\H_Y$-ideal, since $I_\circ \sub k(Y)$, we conclude that  $k(Y) = I_\circ =k(X)$.

    $\LTA$) Since $I_\circ = k(Y)$, $ I_\circ $ is a strong $\H_Y$-ideal and therefore every element of $\Mi(I_\circ)$ is a strong $H_Y$-ideal, hence every element of $X$ is a strong $\H_Y$-ideal and therefore each strong $\H_X$-ideal is a strong $\H_Y$-ideal, by Corollary \ref{Y and X -ideal}.
\end{proof}

\begin{Pro}
	Let $A$ be a multiplicatively closed subset of $R$ and $f:R\to A^{-1}R$ be the natural ring homomorphism. If $I$ is a (strong) $\H_Y$-ideal, then $I^{ec}$ is a (strong) $\H_Y$-ideal, too.
\end{Pro}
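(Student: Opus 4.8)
The plan is to identify the contracted extension $I^{ec}$ explicitly in terms of $A$ and $I$, and then reduce the whole statement to Proposition \ref{opmi}.

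First I would record the standard description of the contraction of the extension along a localization map: for any ideal $I$ of $R$ and the natural homomorphism $f:R\to A^{-1}R$, one has
\[ I^{ec}=\{r\in R: ra\in I \text{ for some } a\in A\}. \]
This is elementary and I would verify it in one line. Indeed, $I^e=A^{-1}I=\{i/a:i\in I,\ a\in A\}$, so $r\in I^{ec}$ means $r/1\in I^e$, i.e. $r/1=i/a$ for some $i\in I$ and $a\in A$; the latter holds precisely when $s(ra-i)=0$ for some $s\in A$, whence $sra=si\in I$, and then $sa\in A$ with $r(sa)\in I$. The reverse inclusion is immediate, since $rb\in I$ with $b\in A$ gives $r/1=(rb)/b\in I^e$.

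Next I would split into two cases according to whether $A\cap I=\tohi$. If $A\cap I\ne\tohi$, choose $a\in A\cap I$; then $a\cdot 1\in I$ with $a\in A$, so $1\in I^{ec}$ and hence $I^{ec}=R$. The whole ring $R$ trivially satisfies all the equivalent conditions of Propositions \ref{hyideal} and \ref{strong}, so it is a (strong) $\H_Y$-ideal and we are done in this case. If instead $A\cap I=\tohi$, then the displayed description of $I^{ec}$ is exactly the defining formula for $I_A$, that is $I^{ec}=I_A$; since $I$ is a (strong) $\H_Y$-ideal and $A\cap I=\tohi$, Proposition \ref{opmi} gives that $I^{ec}=I_A$ is a (strong) $\H_Y$-ideal.

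The argument is essentially a bookkeeping reduction, so I do not expect a genuine obstacle: the only content beyond the cited Proposition \ref{opmi} is the standard localization identity for $I^{ec}$. The one point that requires a moment's care is the case split, since the paper defines $I_A$ only under the hypothesis $A\cap I=\tohi$, and one must dispose of the degenerate case $I^{ec}=R$ separately rather than folding it into Proposition \ref{opmi}.
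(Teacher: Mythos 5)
Your proposal is correct and follows essentially the same route as the paper, which simply observes that $I^{ec}=I_A$ and invokes Proposition \ref{opmi}. Your explicit verification of the identity and the separate treatment of the degenerate case $A\cap I\neq\tohi$ (where $I^{ec}=R$, while the paper's $I_A$ is only defined for $A\cap I=\tohi$) is a small but legitimate precision that the paper's one-line proof glosses over.
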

\begin{proof}
	It is easy to see that $I^{ec}=I_A$ and so by Proposition \ref{opmi} we are done.	
\end{proof}

\section{Certain (strong) $\H_Y$-ideals over or contained in an ideal  }

This section is about the particular  (strong) $\H_Y$-ideals related to an ideal. First we study the maximal (strong) $\H_Y$-ideals, then the smallest  (strong) $\H_Y$-ideal containing an ideal are characterized. As we will see that some of the results hold for $Y$-Hilbert ideals too. For convenience we use some notations. Let $E$ be a partial ordered set. By $maxl(E)$, we mean the set of all maximal elements of $E$. Also if $R$ is a ring, $Y\sub\Sp(R)$ and ${\cal{A}}\sub{\cal{I}}(R)$, we denote by $S\H_Y({\cal{A}})$ ($PS\H_Y({\cal{A}})$) the set of all strong $H_Y$-ideals (proper strong $H_Y$-ideals) of $A$. For $\H_Y$-ideals we use the notations $\H_Y({\cal{A}})$ and $P\H_Y({\cal{A}})$, respectively. By $[I,J]$ we mean $\{K\in{\cal{I}}(R):~I\sub K\sub J\}$; and by $\downarrow I$ and $\uparrow I$ we mean $\{K\in{\cal{I}}(R):~K\sub I\}$ and $\{K\in{\cal{I}}(R):~I\sub K\}$, respectively.
It is straightforward to observe that the union of a chain of proper (strong) $\H_Y$-ideals is a proper (strong) $\H_Y$-ideal.

\begin{Pro}
Let $R$ be a ring and $Y\sub\Sp(R)$ and $I$ is a proper (strong) $\H_Y$-ideal of $R$. Then the following statements hold.
    \begin{itemize}
        \item[(a)]  For every ideal $J\supseteq I$,  $maxl(P\H_Y[I,J])\neq\emptyset$  ($maxl(PS\H_Y[I,J])\neq\emptyset$). In the particular, $maxl(P\H_Y(\uparrow I))\neq\emptyset$ ($maxl(PS\H_Y(\uparrow I))\neq\emptyset$) and for every ideal $J\supseteq k(Y)$, $maxl(P\H_Y(\downarrow J))\neq\emptyset$ ($maxl(PS\H_Y(\downarrow J))\neq\emptyset$).
        \item[(b)] Let $Y\sub \Sp(R)$ and $P$ be a prime ideal containing $k(Y)$. Then $ maxl(\H_Y(\downarrow P)) $ and $ maxl(S\H_Y(\downarrow P)) $ are contained in $ \Sp(R)$.
        \item[(c)] If $k(Y)=\langle 0 \rangle$, then every prime ideal of $R$ is either a (strong) $\H_Y$-ideal or contains a maximal (strong) $\H_Y$-ideal which is a  prime (strong) $\H_Y$-ideal.
    \end{itemize}
    \label{maxiaml storng ideal}
\end{Pro}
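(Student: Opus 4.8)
The plan is to prove (a) by a direct Zorn's lemma argument, then to obtain (b) from the structural fact that minimal primes over a (strong) $\H_Y$-ideal are again (strong) $\H_Y$-ideals, and finally to deduce (c) by combining the two.

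For (a), I would fix an ideal $J\supseteq I$ and consider the poset $P\H_Y[I,J]$ (resp.\ $PS\H_Y[I,J]$) ordered by inclusion. It is nonempty, since $I$ is itself a proper (strong) $\H_Y$-ideal with $I\sub I\sub J$. Any chain $\{K_\alpha\}$ in this poset has $\bigcup_\alpha K_\alpha$ as an upper bound: this union lies in $[I,J]$, and by the remark preceding the proposition the union of a chain of proper (strong) $\H_Y$-ideals is again a proper (strong) $\H_Y$-ideal. Zorn's lemma then yields a maximal element, so $maxl(P\H_Y[I,J])\neq\emptyset$. The two ``in particular'' claims are specializations: taking $J=R$ gives $[I,R]=\uparrow I$, so $maxl(P\H_Y(\uparrow I))\neq\emptyset$; and since $k(Y)$ is the smallest (strong) $\H_Y$-ideal and is proper (because $k(Y)\sub I\subsetneq R$), every (strong) $\H_Y$-ideal contained in $J$ already lies in $[k(Y),J]$ whenever $J\supseteq k(Y)$, so $P\H_Y(\downarrow J)=P\H_Y[k(Y),J]$ and the first part applied with base ideal $k(Y)$ gives $maxl(P\H_Y(\downarrow J))\neq\emptyset$.

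The heart of the argument is (b): any $Q\in maxl(\H_Y(\downarrow P))$ (resp.\ $maxl(S\H_Y(\downarrow P))$) must be prime. The key step is that, since $Q\sub P$ with $P$ prime, there is a minimal prime $Q_0\in\Mi(Q)$ with $Q_0\sub P$ — one obtains $Q_0$ by choosing a prime minimal among those lying between $Q$ and $P$, using that intersections of chains of primes are prime. By Theorem \ref{minimal of I}, $Q_0$ is again a (strong) $\H_Y$-ideal, and $Q\sub Q_0\sub P$, so $Q_0\in\H_Y(\downarrow P)$ (resp.\ $S\H_Y(\downarrow P)$). Maximality of $Q$ forces $Q_0=Q$, whence $Q=Q_0$ is prime, and it is proper since $Q\sub P$. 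I expect this to be the main obstacle, because it is precisely here that the maximality hypothesis must be converted into primality, and the whole step hinges on having Theorem \ref{minimal of I} available for both the plain and the strong case.

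Finally, for (c), I would fix a prime ideal $P$; since $k(Y)=\langle 0\rangle\sub P$, the family $S\H_Y(\downarrow P)$ (resp.\ $\H_Y(\downarrow P)$) contains $\langle 0\rangle$ and is therefore nonempty, and exactly as in (a) every chain in it has an upper bound, so Zorn's lemma produces a maximal element $Q$. By (b), $Q$ is a prime (strong) $\H_Y$-ideal contained in $P$. If $P$ is itself a (strong) $\H_Y$-ideal we are in the first alternative; otherwise $Q\neq P$, so $Q\subsetneq P$ is a maximal (strong) $\H_Y$-ideal contained in $P$ that is prime, which is the second alternative.
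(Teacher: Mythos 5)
Your proposal is correct and follows essentially the same route as the paper: Zorn's lemma (using that unions of chains of proper (strong) $\H_Y$-ideals are again such ideals) for (a), and for (b) the conversion of maximality into primality via Theorem \ref{minimal of I} applied to a minimal prime over $Q$ contained in $P$. The only cosmetic difference is that the paper first disposes of the case where $P$ is itself a (strong) $\H_Y$-ideal, whereas your argument treats both cases uniformly; the key lemma and the structure of the deduction of (c) from (b) are identical.
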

\begin{proof}
	We just prove the  part concerned with the strong $ \H_Y $-ideal. The part concerned  with the $\H_X$-ideal has a same proof.
	
	(a). By using Zorn's lemma, it implies immediately.
	
	(b). If $P$ is an $\H_Y$-ideal, then it is clear. Now suppose that $P$ is not an $\H_Y$-ideal, by part (a), $Q \in maxl(\H_Y(\downarrow P))$. Since $P$ is not an $\H_Y$-ideal, by Corollary \ref{corollary}, $P \notin \Mi(Q)$, so $Q' \in \Mi(Q)$ exists such that $Q' \sub P$. Now Corollary \ref{corollary}, deduces that $Q'$ is an $\H_Y$-ideal, so $Q=Q'$ is prime, by maximality of $Q$.
	
	(c). It is clear by part (b). 
\end{proof}

In the following example we show that $ maxl(PS\H_Y[I,J]) $ need not be a proper maximal strong $ H_Y $-ideal, even if $J$ is a maximal ideal.

\begin{Exa}
    Suppose that $ R = \R[x,y]$, $I=\langle x-1\rangle$, $J=\langle y\rangle$, $K=\langle x-1,y\rangle$, $M=\langle x,y\rangle$ and $Y=\{J, K\}$. It is clear that $Y \subseteq \Sp(R)$ and it is easy to show that $maxl (PS\H_Y[I \cap J , M ])‌= \{J\}$, whereas $J$ is not a proper maximal strong $ \H_Y $-ideal, because $K$ is a strong $ \H_Y $-ideal that properly contains $J$.
\end{Exa}

Using Theorem \ref{minimal of I},  Proposition \ref{Primary property} and Proposition \ref{Minimal filter and ideal}, one can obtain the following corollary straightforward.

\begin{Cor}
    Let $Y\sub \Sp(R)$ and $\Rad(R)=k(Y)$. Every $P \in \Mi(R)$ is a strong $\H_Y $-ideal and therefore there is some minimal prime $\H_Y $-filter $\mathscr{P}$ such that $ \H_Y^{-1}(\mathscr{P}) = P$.
\end{Cor}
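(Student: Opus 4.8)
The plan is to exhibit $k(Y)=\Rad(R)$ as a strong $\H_Y$-ideal and then transport the conclusion to its minimal primes via Theorem \ref{minimal of I}. First I would recall from the discussion preceding this corollary that $k(Y)=kh_Y(0)$ is the smallest strong $\H_Y$-ideal of $R$; in particular $k(Y)=\Rad(R)$ is itself a strong $\H_Y$-ideal. Next I would observe that since $\Rad(R)$ is the intersection of all minimal primes of $R$, i.e. $\sqrt{\langle 0\rangle}=\Rad(R)$, the minimal primes over $\langle 0\rangle$ and over $\Rad(R)$ coincide; hence $\Mi(R)=\Mi(\langle 0\rangle)=\Mi(k(Y))$. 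Now for any $P\in\Mi(R)=\Mi(k(Y))$, Theorem \ref{minimal of I} applied to the strong $\H_Y$-ideal $I=k(Y)$ yields that $P$ is a strong $\H_Y$-ideal, which settles the first assertion.

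For the second assertion I would set $\mathscr{P}:=\H_Y(P)$ and show it is the desired minimal prime $\H_Y$-filter. The key point is that $\H_Y(k(Y))=\{Y\}$: every finite subset $S$ of $k(Y)$ lies in every prime of $Y$, so $h_Y(S)=Y$, whence $\H_Y(k(Y))$ is the smallest $\H_Y$-filter and $\Mi(\H_Y(k(Y)))$ is precisely the set of all minimal prime $\H_Y$-filters. Since $P\in\Mi(k(Y))$ and $k(Y)$ is a strong $\H_Y$-ideal, Proposition \ref{Minimal filter and ideal}(b) gives $\mathscr{P}=\H_Y(P)\in\Mi(\H_Y(k(Y)))$, so $\mathscr{P}$ is a minimal prime $\H_Y$-filter. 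Finally, because $P$ is a strong $\H_Y$-ideal by the first part, Proposition \ref{Primary property}(d) yields $\H_Y^{-1}(\mathscr{P})=\H_Y^{-1}\H_Y(P)=P$, as required.

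The argument is essentially a bookkeeping of the three cited results, so I do not anticipate a genuine obstacle; the only steps requiring a moment's care are the identification $\Mi(R)=\Mi(k(Y))$ (which rests on $k(Y)=\Rad(R)$ and the standard fact that the minimal primes over an ideal depend only on its radical) and the verification that $\H_Y(k(Y))$ is the bottom $\H_Y$-filter, so that a ``minimal prime $\H_Y$-filter over $\H_Y(k(Y))$'' is simply a minimal prime $\H_Y$-filter.
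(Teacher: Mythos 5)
Your proof is correct and follows essentially the same route as the paper, which simply cites Theorem \ref{minimal of I}, Proposition \ref{Primary property} and Proposition \ref{Minimal filter and ideal} and calls the deduction straightforward; you have filled in exactly the bookkeeping the authors had in mind (that $k(Y)=\Rad(R)$ is the smallest strong $\H_Y$-ideal, that $\Mi(R)=\Mi(k(Y))$, and that $\H_Y(k(Y))=\{Y\}$ is the bottom $\H_Y$-filter).
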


\begin{Def}
    Let $Y  \sub \Sp(R)$. It is obvious that  the intersection of any family of $\H_Y $-ideals (resp., strong $\H_Y $-ideals and $Y$-Hilbert ideals) is an $\H_Y $-ideal (resp., a strong $\H_Y $-ideal  and $Y$-Hilbert ideals). According to this fact, the smallest $\H_Y $-ideal (resp., strong $\H_Y $-ideal  and $Y$-Hilbert ideal) containing an arbitrary ideal $I$ exists. We denote it by $I_{\H_Y}$ (resp., $I_{S\H_Y}$ and $kh_Y(I)$) which is the intersection of  $\H_Y $-ideals (resp., strong $\H_Y $-ideals  and $Y$-Hilbert ideals) containing $I$. If there is not any ambiguity we use $I_{\H}$ (resp., $I_{S\H}$) instead of $I_{\H_Y}$ (resp., $I_{S\H_Y}$).
\end{Def}

Clearly, if $Y=\Ma(R)$, then the concepts of $I_\H$ and $I_{S\H}$ coincide with the concepts of $I_z$ and $I_{sz}$, respectively. See \cite{aliabad2011sz} and \cite{mason1989prime} for more detailed information about these concepts. Also, if $Y=\Mi(R)$, then the concepts of $I_\H$ and $I_{S\H}$ coincide with the concepts of $I_{z^\circ}$ (also known as $I_\circ$ and $I^\circ$) and  $I_{sz^\circ}$ (also known as $\zeta(I)$-ideal), respectively. We refer to \cite{aliabad2011sz}, \cite{azarpanah1999z}, \cite{azarpanah2000ideals}, and \cite{mason1973z}, for more information about these concepts. Finally if $Y=\Sp(R)$, then the concepts of $I_\H$ and $I_{S\H}$ and $\sqrt{I}$ coincide.  It is clear that $I_\H \subseteq I_{S\H}$.

\begin{Pro}
    Let $ Y\sub \Sp(R)$ and $I$ and $J$ be two ideals of $R$. Then the following statements hold:
    \begin{itemize}
        \item[(a)] $I_{S\H}=\H_Y^{-1}\H_Y(I)=\{a\in R:~\ex F\in{\bf F}(I),~ h_Y(F) \sub h_Y(a)\}=\{a\in R:~\ex F\in{\bf F}(I),~ kh_Y(a)\sub kh_Y(F)\}$ and if $R$ satisfies $h_Y$-property, then we have  $I_{S\H}=I_{\H} = \{a \in R : \ex~ b \in I \text{ such that } h_Y(b) \sub h_Y(a) \}$. 
        \item[(b)] $I_{S\H}=\sum_{F\in{\bf F}(I)}kh_Y(F)=\bigcup_{F\in{\bf F}(I)}kh_Y(F)$.
        \item[(c)] $ (IJ)_{\H} = I_{\H} \cap J_{\H} = (I\cap J)_{\H} $\big(resp., $(IJ)_{S\H} = I_{S\H} \cap J_{S\H} = (I\cap J)_{S\H}$\big).
        \item[(d)] $kh_Y(I)=\{a\in R:~\ex S\sub I,~ h_Y(S) \sub h_Y(a)\}$. Also $kh_Y(IJ) = kh_Y(I)\cap kh_Y(J)=kh_Y(I\cap J)$.
    \end{itemize}
\label{IH}
\end{Pro}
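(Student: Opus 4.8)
The plan is to handle the four parts in order, drawing (b), (c), (d) from the Galois‑type identities between $h_Y$ and $kh_Y$ already collected in the excerpt, and isolating the one genuinely delicate point, which sits in the $h_Y$-property clause of (a).

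For (a) I would first identify $I_{S\H}$ with $\H_Y^{-1}\H_Y(I)$. By Proposition~\ref{Primary property}(e) the ideal $\H_Y^{-1}\H_Y(I)$ is a strong $\H_Y$-ideal; it contains $I$ by \ref{Primary property}(b); and if $K$ is any strong $\H_Y$-ideal with $I\sub K$, then applying the order-preserving $\H_Y$ together with \ref{Primary property}(d) yields $\H_Y^{-1}\H_Y(I)\sub\H_Y^{-1}\H_Y(K)=K$, so it is the smallest one, i.e.\ $I_{S\H}$. Next I unwind $a\in\H_Y^{-1}\H_Y(I)$, which says $h_Y(a)\in\H_Y(I)$, i.e.\ $h_Y(a)=h_Y(F)$ for some $F\in{\bf F}(I)$. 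To pass to the inclusion form I use $h_Y(Fa)=h_Y(F)\cup h_Y(a)$ (here $Fa\sub I$ is finite), so that $h_Y(F)\sub h_Y(a)$ is equivalent to $h_Y(Fa)=h_Y(a)\in\H_Y(I)$; this identifies $\{a:\ex F\in{\bf F}(I),\ h_Y(F)\sub h_Y(a)\}$ with $\H_Y^{-1}\H_Y(I)$. The $kh_Y$-form then follows from the two standing facts $B\sub kh_Y(A)\Leftrightarrow h_Y(A)\sub h_Y(B)$ and $h_Ykh_Y=h_Y$, which convert $h_Y(F)\sub h_Y(a)$ into $kh_Y(a)\sub kh_Y(F)$.

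The $h_Y$-property clause is where I expect the real work. That $I_{S\H}=I_\H$ is immediate, since the excerpt already records that the $h_Y$-property forces the $\H_Y$-ideals and the strong $\H_Y$-ideals to coincide. The remaining equality with $\{a:\ex b\in I,\ h_Y(b)\sub h_Y(a)\}$ requires collapsing a finite $F=\{b_1,\dots,b_n\}\sub I$ to a single element: by the $h_Y$-property there is $c$ with $h_Y(c)=\bigcap_i h_Y(b_i)=h_Y(F)$, and I would argue that such a witness may be taken inside $\Ge{F}\sub I$ (as $b_1^2+\cdots+b_n^2$ does in $C(X)$ and the gcd does in a B\'ezout domain), giving $b\in I$ with $h_Y(b)\sub h_Y(a)$. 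This is the main obstacle: the raw $h_Y$-property only manufactures a witness in $R$, and the content of the clause is precisely that a small-support element witnessing $h_Y(F)$ can be found inside the ideal. The two easy inclusions—$\{a:\ex b\in I,\dots\}\sub I_{S\H}$ by taking $F=\{b\}$, and its converse once the witness lies in $I$—then sandwich the result.

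Parts (b) and (d) are direct. For (b), part (a) gives $a\in kh_Y(F)\Leftrightarrow h_Y(F)\sub h_Y(a)$, so $I_{S\H}=\bigcup_{F\in{\bf F}(I)}kh_Y(F)$ at once; the family $\{kh_Y(F)\}$ is directed because $F_1\cup F_2\in{\bf F}(I)$ and $h_Y(F_1\cup F_2)=h_Y(F_1)\cap h_Y(F_2)$ force $kh_Y(F_i)\sub kh_Y(F_1\cup F_2)$, and a directed union of ideals is an ideal equal to their sum, giving $\bigcup=\sum$. For (d), $a\in kh_Y(I)=k(h_Y(I))$ means exactly $h_Y(I)\sub h_Y(a)$; taking $S=I$ and, conversely, using $h_Y(I)\sub h_Y(S)\sub h_Y(a)$ for $S\sub I$ shows the two sets coincide. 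The remaining identities follow from $h_Y(IJ)=h_Y(I)\cup h_Y(J)$, from $h_Y(I\cap J)=h_Y(IJ)$ (a prime contains $I\cap J$ iff it contains $IJ$), and from $k(A\cup B)=k(A)\cap k(B)$, which jointly give $kh_Y(IJ)=kh_Y(I)\cap kh_Y(J)=kh_Y(I\cap J)$.

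Finally, for (c) I would reduce the three ideals to one another. Since $IJ\sub I\cap J$ and $(I\cap J)^2\sub IJ$, the semiprime ideals lying over $IJ$ and over $I\cap J$ coincide, and as every $\H_Y$-ideal is semiprime (Lemma~\ref{semiprime}) this gives $(IJ)_\H=(I\cap J)_\H$. The inclusion $(I\cap J)_\H\sub I_\H\cap J_\H$ is clear, since $I_\H\cap J_\H$ is an $\H_Y$-ideal containing $I\cap J$. For the reverse I would invoke Corollary~\ref{corollary}: $(IJ)_\H$ is the intersection of the prime $\H_Y$-ideals over $IJ$, and any such prime $P$ contains $I$ or $J$, hence contains $I_\H$ or $J_\H$, so $I_\H\cap J_\H\sub P$; intersecting over all such $P$ yields $I_\H\cap J_\H\sub(IJ)_\H$. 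The strong case is identical, using Theorem~\ref{minimal of I} to guarantee that the minimal primes involved are themselves strong $\H_Y$-ideals.
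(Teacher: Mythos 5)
Your argument for the first three equalities in (a) and for parts (b), (c), (d) is correct and essentially the paper's. The one genuine difference is your treatment of the second equality in (a): the paper verifies directly that $H=\{a:\ex F\in{\bf F}(I),\ h_Y(F)\sub h_Y(a)\}$ is a strong $\H_Y$-ideal squeezed between $I$ and $I_{S\H}$ (checking closure under addition and the strong $\H_Y$-condition by uniting the finite witness sets), whereas you identify $H$ outright with $\H_Y^{-1}\H_Y(I)$ via $h_Y(Fa)=h_Y(F)\cup h_Y(a)=h_Y(a)$ with $Fa\in{\bf F}(I)$. Your route is shorter and buys the first two equalities simultaneously; the paper's buys an explicit re-verification that $H$ is an ideal. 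Part (c) is the paper's argument verbatim (prime $\H_Y$-ideals over $(IJ)_\H$ contain $I$ or $J$, then intersect using Corollary \ref{corollary}); your extra remark that $(I\cap J)^2\sub IJ$ forces $(IJ)_\H=(I\cap J)_\H$ is redundant but harmless. Part (d), which the paper dismisses as clear, you prove correctly.

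The unresolved point is exactly the one you flag: the $h_Y$-property clause of (a). You correctly observe that $I_{S\H}=I_\H$ follows from the coincidence of the two classes of ideals, and that the remaining equality needs, for each finite $F\sub I$, a \emph{single} $b\in I$ with $h_Y(b)\sub h_Y(F)$ -- but you leave this as ``I would argue that such a witness may be taken inside $\Ge{F}$'', which is not a proof. The raw $h_Y$-property only supplies $c\in R$ with $h_Y(c)=h_Y(F)$, and from $h_Y(c)\sub h_Y(b_i)$ one gets $b_i\in kh_Y(c)$, not $c\in kh_Y(b_i)$, so neither the $\H_Y$-property of $I$ nor of $\Ge{F}$ places $c$ in $I$; the set $\{a:\ex b\in I,\ h_Y(b)\sub h_Y(a)\}=\bigcup_{b\in I}kh_Y(b)$ is then not visibly closed under addition. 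In both motivating examples the witness does lie in $\Ge{F}$ ($b_1^2+\cdots+b_n^2$ in $C(X)$, a generator of $\Ge{F}$ in a B\'ezout domain), and with that strengthened form of the $h_Y$-property your sandwich closes immediately; without it you have not proved the clause. To be fair, the paper's own proof says only ``it is clear'' here, so you have located a step the paper glosses over rather than missed one the paper supplies -- but as submitted your proof of this clause is incomplete, and you should either prove that the witness can be relocated into $\Ge{F}$ or state the stronger hypothesis explicitly.
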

\begin{proof}
	(a). By Proposition \ref{Primary property}, $\H_Y^{-1}\H_Y(I)$ is a strong $\H_Y$-ideal containing $I$. Now, assume that $J$ is a strong $\H_Y$-ideal containing $I$, then $\H_Y^{-1}\H_Y(I)\sub \H_Y^{-1}\H_Y(J)=J$,  so the first equality holds. On the other hand, since $I_{S\H}$ is a strong $\H_Y$-ideal, the set $ H=\{a\in R:~\ex F\in{\bf F}(I),~ h_Y(F) \sub h_Y(a)\}$ is  a subset of $I_{S\H}$. Since $H$ contains $I$, to show the second equality it is enough to prove that $H$ is a strong $\H_Y$-ideal. Let $a,b\in H$, so there exist finite subsets $F_1$ and $F_2$ of $I$ such that $ h_Y(F_1) \subseteq h_Y(a) $ and $ h_Y(F_2) \subseteq h_Y(b) $, so
	\[ h_Y\big(F  _1\cup F_2\big)=h_Y(F_1)\cap h_Y(F_2)\sub h_Y(a)\cap h_Y(b)\sub h_Y(a+b). \]
	Since $F_1\cup F_2$ is finite, $a+b\in H$. Also, since $ h_Y(a)\sub h_Y(ra) $, for each $r\in R$, $H$ is an ideal. Now it is enough to show that $H$ is a strong $\H_Y$-ideal. Let $ h_Y(F)\sub h_Y(a) $, where $F=\{x_1,x_2,\ldots,x_n\}$ is a finite subset of $H$, then for each $1 \leqslant i \leqslant n$,  there exists a finite set $F_i\sub I$ such that $h_Y(F_i)\sub h_Y(x_i)$. Now we have that
	\[ h_Y\Big(\bigcup_{i=1}^{n}F_i\Big)=\bigcap_{i=1}^{n}h_Y(F_i)\sub \bigcap_{i=1}^{n}h_Y(x_i)=h_Y(F)\sub h_Y(a). \]
	Now since $ \bigcup_{i=1}^{n}F_i$ is a finite subset of $I$, we are done. It is clear that if $R$ satisfies in $h_Y$-property, then $I_{S\H}=I_{\H} = \{a \in R : \ex~ b \in I \text{ such that } h_Y(b) \sub h_Y(a) \}.$
	
	(b). Since $\{kh_Y(F)\}_{F\in{\bf F}(I)}$ is a directed set, it follows that $\bigcup_{F\in{\bf F}(I)}kh_Y(F)$ is an ideal and so $\sum_{F\in{\bf F}(I)}kh_Y(F)=\bigcup_{F\in{\bf F}(I)}kh_Y(F)$. Also, it is clear that $\bigcup_{F\in{\bf F}(I)}kh_Y(F)$ is a strong $\H_Y$-ideal containing $I$, so $\bigcup_{F\in {\bf{F}}(I)} kh_Y(F) = I_{S\H}$.
	
	(c). Obviously, since $IJ\sub I\cap J$, we have $(IJ)_\H \sub (I\cap J)_\H\sub I_\H\cap J_\H$ (resp., $(IJ)_{S\H}\sub (I\cap J)_{S\H}\sub I_{S\H}\cap J_{S\H}$). Now, suppose that $P$ is a prime $\H_Y$-ideal containing $(IJ)_\H$ (resp., a prime strong $\H_Y$-ideal containing $(IJ)_{S\H}$). Then clearly $I\sub P$ or $J\sub P$ and so $I_\H\sub P$ or $J_\H\sub P$ (resp., $I_{S\H}\sub P$ or $J_{S\H}\sub P$) and consequently, $I_\H\cap J_\H\sub P$ (resp., $I_{S\H}\cap J_{S\H}\sub P$). 

	(d). It is clear.
\end{proof}

Recall that a ring $R$ satisfies property $A$, if each finitely generated ideal of $R$ consisting of zero-divisors has a non-zero annihilator (equivalently every finitely generated ideal with a zero annihilator contains a non zero-divisor, known as Condition C in \cite{quentel1971compacite}). As it is stated in \cite{azarpanah2000ideals}, Noetherian rings, $C(X)$, regular rings satisfy property $A$. Clearly a proper ideal $I$ is contained in a proper (strong) $\H_Y$-ideal \ff $I_{\H}$ ($I_{S\H}$) is a proper ideal. Also according to Proposition \ref{Primary property}, $\H_Y^{-1}\H_Y (I)$ is a proper ideal of $R$ \ff  $\emptyset\notin\H_Y (I)$ (equivalently, $\H_Y (I)$ is a proper $\H_Y$-filter). It is also clear that every maximal ideal is a (strong) $\H_Y$-ideal \ff every proper ideal is contained in a proper (strong) $\H_Y$-ideal. For any $Y\sub \Sp(R)$ we have the following corollary of the above proposition which is an improvement of \cite[Theorem 1.21]{azarpanah2000ideals} with a totally different proof.

\begin{Cor}
	Let $R$ be a reduced ring, $Y\sub \Sp(R)$, $k(Y) = \Ge{0} $ and $R$ satisfies property $A$. Then any singular ideal $I$ (i.e., every element of $I$ is a zero-divisor) is contained in a proper strong $\H_Y$-ideal and therefore is contained in a proper $\H_Y$-ideal.
\end{Cor}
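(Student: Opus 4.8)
The plan is to reduce the claim to the single statement that $\tohi\notin\H_Y(I)$ and then to play off Lemma \ref{introduction} against property $A$. By the discussion preceding the corollary, $I$ is contained in a proper strong $\H_Y$-ideal \ff $I_{S\H}$ is proper. Since $I_{S\H}=\H_Y^{-1}\H_Y(I)$ by Proposition \ref{IH}(a), Proposition \ref{Primary property}(a) shows that $I_{S\H}=R$ \ff $\H_Y(I)=\H_Y$, and since $\H_Y(I)$ is a filter with least element $\tohi$, this happens \ff $\tohi\in\H_Y(I)$. Hence it suffices to prove that $\tohi\notin\H_Y(I)$, i.e. that the $\H_Y$-filter $\H_Y(I)$ is proper.

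Suppose, to the contrary, that $\tohi\in\H_Y(I)$. By the definition of $\H_Y(I)$ there is a finite set $F\in{\bf F}(I)$ with $h_Y(F)=\tohi$. Taking complements in $Y$ gives $h_Y^c(F)=Y$, so $kh_Y^c(F)=k(Y)=\Ge{0}$. Because $k(Y)=\Ge{0}$, Lemma \ref{introduction} applies and yields $\An(F)=kh_Y^c(F)=\Ge{0}$.

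On the other hand $F$ is a finite subset of the singular ideal $I$, so $\Ge{F}$ is a finitely generated ideal every element of which is a zero-divisor; property $A$ therefore gives $\An(\Ge{F})=\An(F)\ne\Ge{0}$, contradicting the previous paragraph. Thus $\tohi\notin\H_Y(I)$ and $I_{S\H}$ is a proper strong $\H_Y$-ideal containing $I$. Finally, since every strong $\H_Y$-ideal is an $\H_Y$-ideal, $I_{S\H}$ is also a proper $\H_Y$-ideal containing $I$, which gives the last assertion. I do not expect a real obstacle here: the argument is essentially the translation between ``$\tohi\in\H_Y(I)$'' and ``$\An(F)=\Ge{0}$'' furnished by Lemma \ref{introduction}, and the only points needing care are invoking property $A$ for the finitely generated ideal $\Ge{F}$ and observing that, along this route, the reducedness hypothesis is not actually used.
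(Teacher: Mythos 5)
Your proof is correct and rests on the same two ingredients as the paper's --- Lemma \ref{introduction} to convert $h_Y$-data into annihilator data, followed by property $A$ --- only specialized to the element $1$ via the filter reformulation $\tohi\notin\H_Y(I)$, whereas the paper runs the identical computation for an arbitrary $a\in I_{S\H}$ and thereby proves the marginally stronger fact that $I_{S\H}$ is itself singular. Your remark that reducedness is never invoked is also right, and in fact it is automatic here, since $\Rad(R)\sub k(Y)=\Ge{0}$.
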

\begin{proof}
	It is sufficient to show that every element of $ I_{S\H} $ is a zero-divisor. Let $a\in  I_{S\H}$, thus $h_Y(F)\sub h_Y(a)$, for some finite set $F\sub I$, thus $kh_Y^c(F)\sub kh_Y^c(a)$, therefore by Lemma \ref{introduction}, $\An(F)\sub \An(a)$. Since $R$ satisfies property $A$ and $F$ consists of zero-divisors, it follows that $\An(a)\ne\Ge{0}$, that is, $a$ is a zero-divisor.
\end{proof}

If $ k(Y) = \Ge{0} $, then according to Lemma \ref{kykx=0}, Lemma \ref{hminterior} and Proposition \ref{IH}, we have the following characterization of $I_{sz^{\circ}}$.

\begin{Cor}
	Let $R$ be a ring, $Y\sub \Sp(R)$, $k(Y)=\Ge{0}$. Then $I_{sz^{\circ}} = \{ a \in R : \left( h_Y(F) \right) ^\circ\sub h_Y(a) \text{  for some finite } F\sub I\}$.
\end{Cor}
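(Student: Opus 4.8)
The plan is to reduce the statement to the minimal–prime description already recorded in Proposition \ref{IH}(a), and then to trade the set $h_m(F)$ for the interior $\left(h_Y(F)\right)^\circ$ by means of Lemmas \ref{hminterior} and \ref{kykx=0}. Recall that $I_{sz^\circ}$ is, by definition, the ideal $I_{S\H}$ in the case $Y=\Mi(R)$; writing $h_m$ for $h_{\Mi(R)}$, Proposition \ref{IH}(a) then gives
\[ a\in I_{sz^\circ} \quad\LRTA\quad h_m(F)\sub h_m(a)\ \text{ for some finite } F\sub I. \]
Thus it suffices to prove that, for a finite $F\sub R$ and $a\in R$, one has $h_m(F)\sub h_m(a)$ \ff $\left(h_Y(F)\right)^\circ\sub h_Y(a)$.

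The key preliminary observation I would make is that the hypothesis $k(Y)=\Ge{0}$ already forces $k(Y)=\Rad(R)$. Indeed, since $Y\sub\Sp(R)$ we always have $\Rad(R)\sub k(Y)$, so $k(Y)=\Ge{0}$ yields $\Rad(R)=\Ge{0}=k(Y)$. Consequently both the given family $Y$ and the family $X:=\Mi(R)$ (for which $k(X)=\Rad(R)$ holds automatically) satisfy $k(\cdot)=\Rad(R)$, which is exactly the standing hypothesis needed to apply Lemma \ref{kykx=0} to the pair $X=\Mi(R)$ and $Y$.

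With this in hand the equivalence follows by chaining three steps. First, Lemma \ref{hminterior} lets me replace $h_m(F)$ by $\left(h_m(F)\right)^\circ$, so that $h_m(F)\sub h_m(a)$ \ff $\left(h_m(F)\right)^\circ\sub h_m(a)$. Next, Lemma \ref{kykx=0}(b), applied with $X=\Mi(R)$, $S=F$ and $T=\{a\}$, gives $\left(h_m(F)\right)^\circ\sub h_m(a)$ \ff $\left(h_Y(F)\right)^\circ\sub h_Y(a)$. Combining these with the displayed characterization of $I_{sz^\circ}$ then completes the argument. The only genuinely delicate point is the bridging observation that $k(Y)=\Ge{0}$ upgrades to $k(Y)=\Rad(R)$, since this is what legitimizes invoking Lemma \ref{kykx=0}; once that is secured, the correct substitution $S=F$, $T=\{a\}$ in part (b) of that lemma, together with the idempotence of the interior supplied by Lemma \ref{hminterior}, makes the remainder routine.
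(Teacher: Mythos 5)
Your proposal is correct and follows exactly the route the paper intends: the paper states this corollary with no written proof beyond the remark that it follows from Lemma \ref{kykx=0}, Lemma \ref{hminterior} and Proposition \ref{IH}, and your chain (Proposition \ref{IH}(a) for $Y=\Mi(R)$, then $h_m(F)=\left(h_m(F)\right)^\circ$, then the transfer between $\Mi(R)$ and $Y$ via Lemma \ref{kykx=0}) is precisely that argument, mirroring the proof of the earlier proposition characterizing $sz^\circ$-ideals. Your explicit note that $k(Y)=\Ge{0}$ together with $\Rad(R)\sub k(Y)$ forces $k(Y)=\Rad(R)$ is a point the paper leaves implicit, and it is exactly what licenses the appeal to Lemma \ref{kykx=0}.
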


\begin{Cor}
	Let $R$ be a ring, $Y\sub \Sp(R)$, $k(Y)=\Ge{0}$ and $R$ satisfies the property $A$. Then the following facts hold:
	\begin{itemize}
		\item[(a)] Every maximal ideal consisting of zero-divisors is a (strong) $\H_Y$-ideal.
		\item[(b)] Every  ideal consisting of zero-divisors is contained in a maximal (strong) $\H_Y$-ideal which is a prime ideal.
	\end{itemize}
\end{Cor}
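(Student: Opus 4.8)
The plan is to reduce everything to the preceding corollary on singular ideals, together with Proposition \ref{maxiaml storng ideal} and Corollary \ref{corollary}. The first step I would record is that the hypothesis $k(Y)=\Ge{0}$ already forces $R$ to be reduced: since $Y\sub\Sp(R)$ we have $\Ge{0}=k(Y)=\bigcap_{P\in Y}P\supseteq\Rad(R)\supseteq\Ge{0}$, whence $\Rad(R)=\Ge{0}$. Thus $R$ is a reduced ring satisfying property $A$ with $k(Y)=\Ge{0}$, so the preceding corollary applies and every singular ideal is contained in a proper strong $\H_Y$-ideal.

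For (a), let $M$ be a maximal ideal all of whose elements are zero-divisors; then $M$ is a singular ideal, so by the corollary above it is contained in some proper strong $\H_Y$-ideal $J$. Since $M\sub J\subn R$ and $M$ is maximal, necessarily $J=M$, so $M$ itself is a strong $\H_Y$-ideal; being a strong $\H_Y$-ideal it is in particular an $\H_Y$-ideal, which settles both readings of ``(strong)''.

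For (b), let $I$ be an ideal consisting of zero-divisors. As in the proof of the preceding corollary, every element of $I_{S\H}$ is again a zero-divisor, so $I_{S\H}$ is a proper strong $\H_Y$-ideal containing $I$. Applying Proposition \ref{maxiaml storng ideal}(a) to the proper strong $\H_Y$-ideal $I_{S\H}$ gives $maxl(PS\H_Y(\uparrow I_{S\H}))\ne\emptyset$; I pick $M$ in this set. Then $M$ contains $I_{S\H}\supseteq I$, and $M$ is maximal among \emph{all} proper strong $\H_Y$-ideals, since any proper strong $\H_Y$-ideal strictly above $M$ would also lie above $I_{S\H}$ and contradict maximality. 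Hence $M$ is a proper maximal strong $\H_Y$-ideal, and Corollary \ref{corollary}(b) shows $M$ is prime, as required.

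All steps are short; the only point demanding care — and where I would be most wary of a gap — is the passage in (b) from ``maximal among proper strong $\H_Y$-ideals containing $I$'' to ``maximal among all proper strong $\H_Y$-ideals'', which is exactly what licenses the appeal to Corollary \ref{corollary}(b). This is immediate because any proper strong $\H_Y$-ideal above $M$ automatically lies above $I_{S\H}$, but it is worth stating explicitly. A minor alternative that avoids Proposition \ref{maxiaml storng ideal} is to apply Zorn's lemma directly to the nonempty, chain-closed family of proper strong $\H_Y$-ideals containing $I$, using the remark (just before Proposition \ref{maxiaml storng ideal}) that the union of a chain of proper strong $\H_Y$-ideals is again one.
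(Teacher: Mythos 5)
Your proposal is correct and follows the route the paper intends: the corollary is stated without proof precisely because it is meant to drop out of the preceding corollary on singular ideals (noting, as you do, that $k(Y)=\Ge{0}$ already forces $R$ to be reduced, since $\Rad(R)\sub k(Y)$) together with Proposition \ref{maxiaml storng ideal}(a) and Corollary \ref{corollary}(b). Your explicit check that a maximal element of $PS\H_Y(\uparrow I_{S\H})$ is in fact maximal among \emph{all} proper strong $\H_Y$-ideals is exactly the small point worth recording, and the same argument applied to $I_{\H}$ handles the non-strong reading of part (b).
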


\begin{Rem}
	With a method similar to \cite[Thoerem 1.21]{azarpanah2000ideals}, we can observe that if $I$ is an ideal of $R$ and we set $I_0=I$, $I_1=\sum_{a\in I_0}kh_Y(a)$, $I_\alpha=\sum_{a\in I_\beta}kh_Y(a)$ for a nonlimit ordinal $\alpha=\beta +1$ and $I_\alpha=\bigcup_{\beta \le \alpha} I_{\beta}$, for a limit ordinal $\alpha$, then the smallest ordinal $\alpha$ that $I_\alpha=I_\gamma$, for every $\gamma \geq \alpha$, is exactly $I_\H$.
\end{Rem}

\begin{Pro}
	Let $R$ be  a ring, $I$  is an arbitrary ideal of $R$ and $ Y\sub \Sp(R)$. The  following statements hold.
	\item[(a)] If $k(Y)=\Ge{0} $, then  $(m(I))_{\H} = (m(I))_{S\H} = m(I) $.
	\item[(b)] If $\Ma(R)\sub Y$, then  $m(I)=m(I_{\H})=m(I_{S\H})=m(kh_Y(I)) $.
\end{Pro}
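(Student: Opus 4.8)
The plan is to treat the two parts separately, reducing part (a) to a fact already in hand and concentrating the genuine work on a single reverse inclusion in part (b). For part (a), I would invoke Remark \ref{OPMI}: when $k(Y)=\Ge{0}$, the ideal $0_A$ is a strong $\H_Y$-ideal for every multiplicatively closed set $A$, and $m(I)=0_{1+I}$ with $1+I$ multiplicatively closed. Hence $m(I)$ is itself a strong $\H_Y$-ideal, and a fortiori an $\H_Y$-ideal. Since $(m(I))_{\H}$ and $(m(I))_{S\H}$ are by definition the smallest $\H_Y$-ideal and the smallest strong $\H_Y$-ideal containing $m(I)$, and $m(I)$ already lies in both classes, both minimal ideals collapse onto $m(I)$, giving $(m(I))_{\H}=(m(I))_{S\H}=m(I)$ at once.

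For part (b), I would first record the chain $I\sub I_{\H}\sub I_{S\H}\sub kh_Y(I)$, which follows from the standing observations that every $Y$-Hilbert ideal is a strong $\H_Y$-ideal and every strong $\H_Y$-ideal is an $\H_Y$-ideal (so each minimal ideal in the chain contains $I$ and belongs to the next, wider class). The operation $m$ is plainly monotone, since $a=ai$ with $i\in J_1$ still has $i\in J_2$ whenever $J_1\sub J_2$; applying it to the chain yields $m(I)\sub m(I_{\H})\sub m(I_{S\H})\sub m(kh_Y(I))$. It therefore suffices to close the loop with the single inclusion $m(kh_Y(I))\sub m(I)$, after which all four sets are forced equal.

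The key step, and the only place where the hypothesis $\Ma(R)\sub Y$ enters, is exactly this reverse inclusion, which I would prove by a comaximality argument. Take $a\in m(kh_Y(I))$, so $a=aj$, that is $a(1-j)=0$, for some $j\in kh_Y(I)$. I claim $I+\Ge{1-j}=R$: otherwise this ideal would lie in some maximal ideal $M$, whence $I\sub M$ and, because $\Ma(R)\sub Y$, also $M\in h_Y(I)$, so that $j\in kh_Y(I)\sub M$; but $1-j\in M$ as well, forcing $1\in M$, a contradiction. From $1=i_0+r(1-j)$ with $i_0\in I$ and $r\in R$, multiplying by $a$ and using $a(1-j)=0$ gives $a=ai_0$ with $i_0\in I$, i.e. $a\in m(I)$. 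I expect this comaximality step to be the main (and essentially the only) obstacle; the chain and the monotonicity of $m$ are formal, and the equality $m(I)=m(I_{\H})=m(I_{S\H})=m(kh_Y(I))$ then drops out immediately.
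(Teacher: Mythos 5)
Your proof is correct. Part (a) is exactly the paper's argument: both of you reduce it to Remark \ref{OPMI} via $m(I)=0_{1+I}$, so that $m(I)$ is already a (strong) $\H_Y$-ideal and the two closure operators fix it. For part (b) the paper also sets up the chain $I\sub I_{\H}\sub I_{S\H}\sub kh_Y(I)$ and applies monotonicity of $m$, but it disposes of the essential reverse inclusion by citing an external result (\cite[Remark 2.6]{aliabad2017spectrum}) for the equality $m(kh_Y(I))=m(I)$ when $\Ma(R)\sub Y$. You instead prove that inclusion from scratch: given $a=aj$ with $j\in kh_Y(I)$, the comaximality $I+\Ge{1-j}=R$ follows because any maximal ideal over $I+\Ge{1-j}$ would lie in $Y$, hence in $h_Y(I)$, hence contain $j$ and $1-j$ simultaneously; writing $1=i_0+r(1-j)$ and multiplying by $a$ kills the second term and yields $a=ai_0\in m(I)$. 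This argument is sound (the hypothesis $\Ma(R)\sub Y$ enters exactly where it must, to guarantee $kh_Y(I)\sub M$ for every maximal $M\supseteq I$), and it buys you a self-contained proof where the paper relies on a reference to a paper listed as ``to appear''; the trade-off is only a few extra lines.
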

\begin{proof}
	(a). It is clear from Remark \ref{OPMI}.
	
	(b). It is shown in \cite[Remark 2.6]{aliabad2017spectrum} that when $\Ma(R)\sub Y$, then $m(kh_Y(I))=m(I)$, for every ideal $I$ and since clearly $I\sub I_{\H}\sub I_{S\H}\sub kh_Y(I)$, it follows that $m(I)=m(I_{\H})=m(I_{S\H})=m(kh_Y(I))$.
\end{proof}

The condition $\Ma(R)\sub Y$ is necessary for the equalities of part (b) of the above proposition. For instance, if $Y=\Mi(R)$ and $M$ is a maximal ideal containing a non zero-divisor, then $m(M)\sub M\ne R =m(M_{\H_Y})=m(M_{S\H_Y})$.  Also, as we see  in Example \ref{ex 5.11}, there exists a ring $R$, $Y\sub \Sp(R)$ and a maximal ideal  $M\notin Y$ such that $M$ is a strong $\H_Y$-ideal. Hence, in this case $m(M_{S\H_Y})=m(M)\neq R=m(R)=m(kh_Y(M))$. 

As a corollary of the above proposition we have the following proposition which gives more facts about $I_{\H}$, $I_{S\H}$ and $kh_Y(I)$.

\begin{Pro}
	Let $X,Y \sub \Sp(R)$. If $ I $ is an ideal of $R$ and $ n \in \N $, then
	\begin{itemize}
		\item[(a)] $I^n\sub I \sub \sqrt{I} \sub I_{\H_Y}  \sub I_{S\H_Y} \sub kh_Y(I)$.
		\item[(b)] $(I^n)_{\H_Y}=(\sqrt{I})_{\H_Y}=I_{\H_Y}$,  $(I^n)_{S\H_Y}=(\sqrt{I})_{S\H_Y}=I_{S\H_Y}$ and $kh_Y(I^n)=kh_Y(\sqrt{I})=kh_Y(I)$.
		\item[(c)] If every element of $Y$ is an  $\H_X$-ideal (resp., strong $\H_X$-ideal and $X$-Hilbert ideal), then $I_{\H_X}\sub I_{\H_Y}$ (resp., $I_{S\H_Y}\sub \H_Y^{-1}\H_Y(I)$ and $kh_X(I)\sub kh_Y(I)$).
	\end{itemize}
	\label{Opearating _H}
\end{Pro}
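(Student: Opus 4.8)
The plan is to treat the three operators $(-)_{\H_Y}$, $(-)_{S\H_Y}$ and $kh_Y(-)$ uniformly as closure operators on ${\cal I}(R)$ and to exploit two facts recorded earlier: each of them produces a semiprime ideal (by Lemma \ref{semiprime}, together with the observation that a $Y$-Hilbert ideal is an intersection of primes), and the class inclusions ``$Y$-Hilbert $\RTA$ strong $\H_Y$ $\RTA$ $\H_Y$'' stated after the definitions. Each operator is extensive, monotone and idempotent directly from its defining property of being the \emph{smallest} ideal of the relevant class containing its argument.

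For (a), the inclusions $I^n\sub I\sub\sqrt I$ are standard. For $\sqrt I\sub I_{\H_Y}$ I note that $I_{\H_Y}$ is an $\H_Y$-ideal containing $I$, hence semiprime by Lemma \ref{semiprime}; since $\sqrt I$ is the smallest semiprime ideal containing $I$, we get $\sqrt I\sub\sqrt{I_{\H_Y}}=I_{\H_Y}$. For the last two inclusions I use that $kh_Y(I)$ is a $Y$-Hilbert ideal, hence a strong $\H_Y$-ideal, containing $I$, so minimality of $I_{S\H_Y}$ gives $I_{S\H_Y}\sub kh_Y(I)$; likewise $I_{S\H_Y}$ is an $\H_Y$-ideal containing $I$, whence $I_{\H_Y}\sub I_{S\H_Y}$.

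For (b), the equality $(\sqrt I)_{\H_Y}=I_{\H_Y}$ follows from the chain $I\sub\sqrt I\sub I_{\H_Y}$ of part (a) by applying $(-)_{\H_Y}$ and using monotonicity together with idempotency. For $(I^n)_{\H_Y}=I_{\H_Y}$ the inclusion ``$\sub$'' is monotonicity, while $(I^n)_{\H_Y}$, being semiprime and containing $I^n$, contains $\sqrt{I^n}=\sqrt I\supseteq I$, which gives ``$\supseteq$''. The same two-line argument, applied verbatim with $(-)_{S\H_Y}$ and with $kh_Y(-)$ (both semiprime-valued), settles the strong and Hilbert equalities.

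For (c), I would invoke the $\H_Y$-ideal (resp.\ strong $\H_Y$-ideal, resp.\ $Y$-Hilbert) analogue of Corollary \ref{Y and X -ideal}(a), read with the roles of $X$ and $Y$ interchanged: the hypothesis ``every element of $Y$ is an $\H_X$-ideal'' is then equivalent to ``every $\H_Y$-ideal is an $\H_X$-ideal''. Since $I_{\H_Y}$ is an $\H_Y$-ideal containing $I$, it is therefore an $\H_X$-ideal containing $I$, and minimality of $I_{\H_X}$ yields $I_{\H_X}\sub I_{\H_Y}$; the strong and Hilbert statements follow identically (recalling $\H_Y^{-1}\H_Y(I)=I_{S\H_Y}$ by Proposition \ref{IH}(a)). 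The routine steps are the closure-operator bookkeeping in (a) and (b); the one place demanding care is (c), namely quoting Corollary \ref{Y and X -ideal} for the correct ideal class and with $X,Y$ in the right order, so that a hypothesis phrased in terms of elements of $Y$ correctly translates into the assertion that every $\H_Y$-ideal is an $\H_X$-ideal.
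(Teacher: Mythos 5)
Your proposal is correct and follows essentially the same route as the paper, whose proof consists of citing Lemma \ref{semiprime} for (a), Proposition \ref{IH} together with (a) for (b), and Corollary \ref{Y and X -ideal} for (c); you have simply filled in the routine closure-operator details (extensivity, monotonicity, idempotency, and semiprimeness of the three closures) that the paper leaves implicit. Your reading of the hypothesis in (c) via the $\H_X$-ideal analogue of Corollary \ref{Y and X -ideal} with $X$ and $Y$ interchanged is exactly the intended argument.
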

\begin{proof}
	(a). It follows form Lemma \ref{semiprime}. 
	
	(b). By Proposition \ref{IH} and part (a) it is straightforward.  
	
	(c). It follows from Corollary \ref{Y and X -ideal}.
\end{proof}

Supposing $ R = \R[x,y] $, $ Y = \{ \Ge{x} , \Ge{y} \} $, then $ \Ge{x,y} = \Ge{x} + \Ge{y}  $ is not an $ \H_Y $-ideal, so the sum of a two strong $ \H_Y $-ideals, need not be an $ \H_Y $-ideal. One can easily see that the sum of a family of strong  $\H_Y$-ideals $\{I_{\lambda} \}_{\lambda\in \Lambda}$ is a strong  $\H_Y$-ideal   \ff  $(\sum_{\lambda\in \Lambda}I_{\lambda})_{S\H}= \sum_{\lambda\in \Lambda}(I_{\lambda})_{S\H}$. In addition, we can see that    $(\sum_{\lambda\in \Lambda}I_{\lambda})_{S\H}= (\sum_{\lambda\in \Lambda}(I_{\lambda})_{S\H})_{S\H}$. Also, if $\Ma(R)\sub Y$, then $\sum_{\la\in\La}I_\la=R$ \ff $\sum_{\la\in\La}(I_\la)_{S\H}=R$. To see this, suppose that $\sum_{\la\in\La}I_\la\neq R$, then by the hypothesis there is a strong $\H_Y$-ideal containing $\sum_{\la\in\La}I_\la$ and so $(\sum_{\la\in\La}I_\la)_{S\H_Y}\neq R$. Therefore, $\sum_{\la\in\La}(I_\la)_{S\H_Y}\sub (\sum_{\la\in\La}(I_\la)_{S\H_Y})_{S\H_Y}=(\sum_{\la\in\La}(I_\la))_{S\H_Y}\neq R$. We shall note that the same statements hold for the case  $\H_Y$-ideals.



\end{document}